\theoremstyle{plain}
\newtheorem{thm}{Theorem}[section]
\newtheorem{lem}[thm]{Lemma}
\newtheorem{prop}[thm]{Proposition}
\newtheorem{cor}[thm]{Corollary}
\theoremstyle{definition}
\newtheorem{rem}[thm]{Remark}
\newcommand{\Z}{\mathbb Z}
\newcommand{\C}{\mathbb C}
\newcommand{\Se}{\mathcal S}
\newcommand{\cp}{\psi}
\newcommand{\ncp}{\tilde{\phi}}
\def\imod#1{\allowbreak\mkern10mu({\operator@font mod}\,\,#1)}
\DeclareMathOperator{\ann}{Ann}
\DeclareMathOperator{\spa}{span}
\begin{document}

\title{Primitive Ideals of Noetherian Generalized
Down-Up Algebras}
\author{Iwan Praton}
\date{}

\maketitle

\begin{abstract}
We classify the primitive ideals of noetherian generalized down-up algebras.
\end{abstract}

\noindent\textit{Keywords}: down-up algebra; generalized down-up algebra; 
enveloping algebra; primitive ideal.\\
\textit{2000 Mathematics Subject Classification:} 16D60; 17B35; 16D25; 16S36.

\section{Introduction and preliminaries}
\subsection{Background}
We can construct $U(\mathfrak{sl}_2)$, the enveloping algebra of
$\mathfrak{sl}_2$,  as the algebra generated
by three elements $h, x, y$, subject to the relations
\[
hx-xh=x,\quad yh-hy=y, \quad xy-yx=h.
\]
(The first two relations are somewhat nontraditional.)
There are various ways of generalizing
this construction.
One could increase the number of generators;
this could lead to enveloping algebras of other
Lie algebras. Another path would be to
stay with three generators but modify
the defining relations. We would then
get an algebra with a reasonably small
dimension, making the algebra
computationally tractable.

One such example is Smith's algebras
similar to $U(\mathfrak{sl}_2)$; see~\cite{Smith}.
These algebras are generated by $h,x,y$ subject
to
\[
hx-xh=x,\quad yh-hy=y, \quad xy-yx=\phi(h),
\]
where $\phi$ is a polynomial. Thus
$h$ in this case is no longer the
commutator of $x$ and $y$, but
a polynomial root of said commutator.
The resulting algebras share many 
properties with
$U(\mathfrak{sl}_2)$. 

Another example, going in a different
direction, are the down-up algebras
of Benkart and Roby~\cite{BenkartRoby}.
These can be defined as the algebras
generated by $h, u, d$ subject to
the relations
\[
hu-ruh=\gamma u,\quad
dh-rhd=\gamma d,\quad
du-sud=h,
\]
where $r, s$, and $\gamma$ are constants.
When $\gamma\neq 0$,
these algebras have similar
defining relations with $U(\mathfrak{sl}_2)$,
except the commutators have
been modified. Down-up algebras,
especially when $rs\gamma\neq 0$,
also share many properties
with $U(\mathfrak{sl}_2)$.

Other generalizations of $U(\mathfrak{sl}_2)$
exist, some of which
can be considered as a mixture
of Smith's algebras and down-up algebras.
For example, Rueda has studied
algebras that are generated by
$h,x,y$ subject to
\[
hx-xh=x,\quad yh-hy=y,\quad xy-sxy=\phi(h),
\]
where $s$ is a constant and $\phi$
is a polynomial~\cite{Rueda}. For
another example, consider the conformal
$\mathfrak{sl}_2$ enveloping
algebras of Le Bruyn~\cite{LeBruyn}, 
which are generated
by $h,u,d$ subject to
\[
hu-ruh=u, \quad dh-rhd=d,\quad du-sud=ah^2+h,
\]
where $r,s,a$ are constants with $rs\neq 0$.

In 2004 Cassidy and Shelton introduced
the ultimate mixture of
Smith's algebras and down-up algebras%
~\cite{CassidyShelton}. These algebras
are generated by $h,u,d$ subject to
\[
hu-ruh=\gamma u, 
\quad dh-rhd= \gamma d,
\quad du-sud=\phi(h),
\]
where $r,s,\gamma$ are constants
and $\phi$ is a polynomial. It is
these algebras that are the subject
of this paper.

More specifically, 
we classify the primitive ideals of noetherian generalized
down-up algebras, hence completing the project begun in~\cite{Praton2}
and~\cite{Praton3}. As before, we try to provide a reasonably explicit
list of generators for these primitive ideals. Most of the necessary
techniques are straightforward generalizations of~\cite{Praton1}. 
In particular,
most of the time we will be using explicit computation---part of the
charm of studying down-up algebras is that 
elementary computations actually lead to useful results; it is not 
necessary to rely on heavy theoretical machinery.  

\subsection{Notations}
As usual, we denote the complex numbers by $\C$; $\C^\times$
will denote the nonzero complex numbers. 
If $z\in\C$, it is convenient
to define $o(z)$ to be the order of $z$ in the multiplicative
group $\C^\times$. Thus  $z$ is a primitive $n$th root
of unity if and only if $o(z)=n$. If $z$ is not a root of unity,
we set $o(z)=\infty$. By convention, when we write
$o(z)=n$ or $m$, we take $n$ or $m$ to be finite, i.e.,
$z$ is a root of unity. 

We also
need to use an ordering on the degree
of two-variable polynomials, so we
describe here the ordering that we 
will use. The
monomial $x^iy^j$ has degree $(i,j)$;
the degrees are ordered ``alphabetically
by last name'', i.e., $(i,j)>(i',j')$ iff
$j>j'$ or $j=j'$ and $i>i'$. The degree
of a polynomial is the highest degree of its
constituent monomials.

We write $\langle a,b,\dots,z\rangle$
to denote the (two-sided) ideal generated
by the elements $a,b,\dots,z$. 

\subsection{Definition of generalized down-up algebras}
We now state a careful definition of our object
of study.
A generalized down-up algebra is an algebra over $\C$ parametrized
by three complex numbers and a complex polynomial. Specifically,
the algebra $L(\phi, r,s,\gamma)$, where $r,s,\gamma\in\C$ and
$\phi\in\C[x]$, is the $\C$-algebra generated by three
generators $u$, $d$, and $h$,
subject to the relations
\[
hu-ruh=\gamma u,\quad
dh-rhd=\gamma d,\quad
du-sud=\phi(h).
\]
(We follow the convention in~\cite{Praton3},
which is slightly different from~\cite{CassidyShelton}.) 
We often just write $L$
for $L(\phi,r,s,\gamma)$ when the parameter values are
implicitly known. The algebra $L$ is
noetherian if and only if $rs\neq 0$. Primitive
ideals in the non-noetherian case
was described in~\cite{Praton3}, so in this paper we always assume
that $rs\neq 0$. This assumption  implies that $L$
is a domain~\cite[Proposition~2.5]{CassidyShelton}.

\subsection{Bases}
In order to do computations, we need a basis for $L$. 
The standard basis consists of the monomials
$\{u^ih^jd^k: i,j,k\geq 0\}$. Since we are assuming that
$L$ is a domain, we can arrange the $u$s,
$d$s, and $h$s in any order, i.e., the monomials
$\{u^id^kh^j\}$, $\{d^kh^ju^i\}$, $\{d^ku^ih^j\}$, $\{h^ju^id^k\}$,
and $\{h^jd^ku^i\}$ (where $i,j,k\geq 0$) are all bases of $L$~\cite[Theorem 2.1]{CassidyShelton}.

\subsection{Grading}\label{grading}
There is a useful grading on $L$ that results from declaring
that $u$ has degree $+1$, $d$ has degree $-1$, and $h$
has degree $0$. (Thus the monomial $u^ih^jd^k$ has degree
$i-k$.) Clearly $L_0$, the elements of degree 0, is
itself an algebra. It turns out to be  a
polynomial algebra on the two variables $h$ 
and $ud$~\cite[Proposition 4.1]{CassidyShelton}.
If $i>0$, then any element in $L_i$ can be written
(uniquely) as $u^if(h,ud)$, where $f\in\C[x,y]$.
Similarly, if $k<0$, then any element in $L_k$ can
be written (uniquely) as $g(h,ud)d^k$, where
$g\in\C[x,y]$. 

As in the commutative case, we say
that an element of $L_i$ is \emph{homogeneous of
degree $i$}. Any $x\in L$ can be written as a sum
of homogeneous elements:  $x=\sum_{i\in\Z} x_i$,
where $x_i\in L_i$  and only finitely many of the $x_i$s
are nonzero. We define the length $\ell(x)$ of $x$
to be the number of nonzero $x_i$s: 
$\ell(x)=\#\{i\in \Z: x_i\neq 0\}$. 
(Thus an element of length 1 is a nonzero
homogeneous element.)

\subsection{Isomorphisms}
Different values of the parameters $\phi$, $r$, $s$, and $\gamma$
can lead to isomorphic algebras. For example, $L(\phi,r,s,\gamma)$
and $L(\psi,r,s,c\gamma)$ are isomorphic, where $c\neq 0$ and $\psi(x)=\phi(cx)$.
(The isomorphism sends $u$ to $u'$, $d$ to $d'$, and $h$
to $ch'$.) Thus we can assume that either $\gamma=0$ or
$\gamma=1$ without loss of generality. Similarly, there is an
isomorphism between $L(\phi,r,s,\gamma)$ and
$L(c\phi, r,s,\gamma)$ via $u\mapsto cu'$, $d\mapsto d'$,
and $h\mapsto h'$. Thus we can assume that the polynomial $\phi$
is either monic or zero. We will, however, continue to use $\gamma$
and $\phi$ without additional assumptions, since assuming
$\gamma=1$ or $\phi$ monic does not significantly
ease our workload.  But there \emph{is}
an isomorphism between generalized down-up algebras that
we will exploit heavily; see~\cite[Proposition 1.7]{CarvalhoLopes}
\begin{lem}
Suppose $r\neq 1$. Then $L(\phi,r,s,\gamma)$ is isomorphic
to $L(\psi,r,s,0)$, where $\psi(x)=\phi(x-\gamma/(r-1))$.
\end{lem}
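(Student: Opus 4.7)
The plan is to exhibit the isomorphism explicitly as a translation of the $h$-generator. Since the first two defining relations of $L(\phi,r,s,\gamma)$ differ from those of $L(\psi,r,s,0)$ only by the $\gamma u$ and $\gamma d$ terms, the natural idea is to absorb $\gamma$ by sending $h$ to $h' + a$ for a suitable constant $a$. The hypothesis $r\neq 1$ is what makes this constant exist.

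More precisely, I would define the map
\[
f\colon L(\phi,r,s,\gamma)\longrightarrow L(\psi,r,s,0),\qquad u\mapsto u',\ d\mapsto d',\ h\mapsto h'-\tfrac{\gamma}{r-1},
\]
where $u',d',h'$ are the generators of the target algebra, and check that $f$ respects the three defining relations. For the first relation, a direct substitution gives
\[
f(hu-ruh)=\bigl(h'-\tfrac{\gamma}{r-1}\bigr)u'-ru'\bigl(h'-\tfrac{\gamma}{r-1}\bigr)=(h'u'-ru'h')-\tfrac{\gamma}{r-1}(1-r)u'=\gamma u'=f(\gamma u),
\]
using $h'u'=ru'h'$ in the target. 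The second relation is handled symmetrically. For the third relation, $f(du-sud)=d'u'-su'd'=\psi(h')$, which must equal $f(\phi(h))=\phi\bigl(h'-\gamma/(r-1)\bigr)$; this is exactly the definition of $\psi$ given in the statement.

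Since the universal property of $L(\phi,r,s,\gamma)$ (as the $\C$-algebra with the three given generators and three given relations) guarantees that $f$ extends to a well-defined algebra homomorphism once the relations are verified, the only remaining step is to produce an inverse. I would define
\[
g\colon L(\psi,r,s,0)\longrightarrow L(\phi,r,s,\gamma),\qquad u'\mapsto u,\ d'\mapsto d,\ h'\mapsto h+\tfrac{\gamma}{r-1},
\]
and verify its three relations by the same type of calculation (now with $\gamma=0$ in the source and the translation going the other way, using $\phi(x)=\psi(x+\gamma/(r-1))$). Then $f\circ g$ and $g\circ f$ act as the identity on generators, hence are the identity on the algebras.

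There isn't really a hard step here; the entire content is recognizing the correct value of the shift, namely the unique fixed point $-\gamma/(r-1)$ of the affine map $x\mapsto rx+\gamma$, which exists precisely because $r\neq 1$. Everything else is routine substitution, so the writeup should be short.
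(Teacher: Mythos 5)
Your proof is correct: the shift $h\mapsto h'-\gamma/(r-1)$ is exactly the right change of variables, the three relation checks are accurate, and the inverse map settles the isomorphism. The paper itself does not prove this lemma but cites it from Carvalho and Lopes (Proposition 1.7); your argument is the standard one and fills that gap correctly.
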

This lemma clarifies the role of $\gamma$: in most
cases, $\gamma$ is not necessary! The information
carried by $\gamma$ can be transfered into the polynomial
$\phi$. Note that this phenomenon is not visible in the original
formulation of down-up algebras, since in that setting we do not 
have flexibility in the choice of $\phi$.

We will thus treat the cases $r=1$ and $r\neq 1$ differently.
When $r=1$, we will consider both $\gamma=0$ and
$\gamma\neq 0$. But when $r\neq 1$, we will assume
that $\gamma=0$; this cuts down the number of cases
we have to consider by almost half.

\subsection{Conformal algebras}
Here is an example of the usefulness of Lemma 1.1.
Recall that the algebra $L(\phi,r,s,\gamma)$ is called
\emph{conformal} if there exists a polynomial $\cp$
such that $s\cp(x)-\cp(rx+\gamma)=\phi(x)$. 
Conformal algebras are
nice because we can then define $H=ud+\cp(h)$;
we shall see that having such an element is quite useful.
For one, any polynomial in $ud$ and $h$
can be written as a polynomial in $H$ and $h$;
the commutation relations involving $H$ are more
convenient than those involving $ud$. Specifically,
it is straightforward to show that
$Hu=suH$ and $dH=sHd$. 

To determine exactly when
an algebra is conformal is not a complete triviality,
mostly due to the presence of $\gamma$,
but Lemma 1.1 allows us to ignore $\gamma$
most of the time, so we can determine quickly
which of our algebras are conformal~\cite[Lemma 1.6,
Proposition 1.8]{CarvalhoLopes}.
\begin{lem}
Suppose $\phi(x)=\sum_{i=0}^n a_ix^i$. 
If $s\neq r^j$ for $0\leq j\leq n$, then
$L(\phi,r,s,0)$ is conformal. If $s=r^j$ for
some $0\leq j\leq n$ and $a_j=0$, then
$L(\phi,r,s,0)$ is also conformal.
Otherwise $L(\phi,r,s,0)$ is not conformal.
Additionally, when $\gamma\neq 0$,
$L(\phi,1,s,\gamma)$ is conformal.
\end{lem}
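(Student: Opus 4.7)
The plan is to solve the functional equation $s\cp(x)-\cp(rx+\gamma)=\phi(x)$ directly by expanding $\cp$ in the monomial basis and comparing coefficients. The computation splits naturally into the two cases of the lemma (namely $\gamma=0$, covering the first three assertions, and $r=1$ with $\gamma\neq 0$, covering the last one).

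First I would dispatch the case $\gamma=0$. Writing $\cp(x)=\sum_i b_i x^i$, a one-line expansion gives
\[
s\cp(x)-\cp(rx)=\sum_i (s-r^i)\, b_i\, x^i,
\]
so matching coefficients with $\phi(x)=\sum_{i=0}^n a_i x^i$ reduces the existence of $\cp$ to the solvability of the scalar system $(s-r^i)b_i=a_i$ for $0\le i\le n$. If $s\neq r^j$ for every $0\le j\le n$, each equation has the unique solution $b_i=a_i/(s-r^i)$, giving a $\cp$ of degree $\le n$. If some $j$ in the range satisfies $s=r^j$, the corresponding equation forces $a_j=0$; when that holds for every such $j$ (there may be several if $r$ is a root of unity), we are free to take $b_j=0$ and solve the remaining equations, producing $\cp$. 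Conversely, if $s=r^j$ and $a_j\neq 0$ for some $j\le n$, the $j$th equation is inconsistent, so no $\cp$ exists. This yields precisely the trichotomy asserted for $L(\phi,r,s,0)$.

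For the remaining assertion ($r=1$, $\gamma\neq 0$) I would argue via the linear operator
\[
T:\C[x]\longrightarrow\C[x],\qquad T\cp(x)=s\cp(x)-\cp(x+\gamma),
\]
and show that $T$ is surjective for every $s\in\C^\times$. If $s\neq 1$ then $T(x^n)=(s-1)x^n+\text{(lower order)}$, so on the monomial basis $T$ is upper triangular with nonzero diagonal entries, and hence restricts to an isomorphism on each $\C[x]_{\le n}$. If $s=1$ then $T(x^n)=-n\gamma x^{n-1}+\text{(lower order)}$, so (since $\gamma\neq 0$) $T$ sends $\C[x]_{\le n+1}$ onto $\C[x]_{\le n}$ by a simple dimension count. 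In either subcase, given $\phi$ we obtain a preimage $\cp$, so $L(\phi,1,s,\gamma)$ is conformal.

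There is no real obstacle here beyond notational bookkeeping; the one point that requires a little care is the intermediate case when $r$ is a root of unity, where $s=r^j$ may hold for several $j\in\{0,\dots,n\}$ simultaneously, and conformality requires $a_j=0$ for \emph{every} such $j$ rather than merely one. Phrasing the statement as the contrapositive of the ``Otherwise'' clause makes this unambiguous.
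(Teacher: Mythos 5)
Your proof is correct. Note, however, that the paper does not actually prove this lemma: it is quoted from Carvalho and Lopes \cite{CarvalhoLopes} (their Lemma 1.6 and Proposition 1.8), so there is no in-paper argument to compare against. Your coefficient-matching argument for $\gamma=0$ is the natural one and is complete: the equation $s\cp(x)-\cp(rx)=\sum_i(s-r^i)b_ix^i$ reduces everything to the solvability of $(s-r^i)b_i=a_i$, and your operator-theoretic argument for $L(\phi,1,s,\gamma)$ with $\gamma\neq 0$ (triangularity of $T$ with nonzero diagonal when $s\neq 1$, and the degree-lowering surjection $\C[x]_{\le n+1}\twoheadrightarrow\C[x]_{\le n}$ when $s=1$, where the images $T(x^k)$ for $1\le k\le n+1$ have distinct degrees $0,\dots,n$ and hence span) is sound. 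Your closing remark is also a genuine and worthwhile sharpening of the statement itself: as written, ``$s=r^j$ for some $0\le j\le n$ and $a_j=0$'' is ambiguous when $r$ is a root of unity, since several exponents $j$ in range may satisfy $s=r^j$, and conformality requires $a_j=0$ for \emph{all} of them (e.g.\ $o(r)=2$, $s=1$, $\phi(x)=x^2$ has $a_0=0$ but is not conformal). Reading the condition as the negation of the ``Otherwise'' clause, as you do, is the correct interpretation.
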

Since conformal algebras behave somewhat
differently than nonconformal ones, we will
treat these two cases separately. It turns out
that the nonconformal case, even when
$\gamma=0$, has a similar flavor to
the situation when $\gamma\neq 0$.

\subsection{Schur's Lemma}
Finally, one of our main weapons is the
following result, well-known to representation
theorists as Dixmier's version of 
Schur's lemma~\cite[2.6.5]{Dixmier}.
\begin{lem}
Let $A$ be an $\C$-algebra and $M$ a simple
$A$-module whose dimension is
countable. If $\xi\in \hom_{\C}(M,M)$
commutes with the action of $A$ on $M$,
then $\xi$ acts as a scalar on $M$. 
\end{lem}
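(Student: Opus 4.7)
The plan is to use the standard strategy for Dixmier's version of Schur's lemma, exploiting algebraic closedness of $\C$ and the countable-dimension hypothesis to rule out any noncentral element of the endomorphism algebra.

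First, I would let $D\subseteq \hom_\C(M,M)$ denote the set of $\C$-linear endomorphisms of $M$ that commute with the $A$-action. The classical Schur argument shows $D$ is a division ring: if $\xi\in D$ is nonzero, then $\ker\xi$ and $\mathrm{im}\,\xi$ are $A$-submodules of the simple module $M$, forcing $\ker\xi=0$ and $\mathrm{im}\,\xi=M$, so $\xi$ is invertible in $\hom_\C(M,M)$, and the inverse automatically commutes with $A$. Thus $D$ is a $\C$-algebra that is a division ring, with $\C\cdot\mathrm{id}_M$ contained in its center. The goal is to prove $D=\C\cdot\mathrm{id}_M$.

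Next I would bound $\dim_\C D$. Fix any nonzero $v\in M$ and consider the evaluation map $D\to M$, $\xi\mapsto \xi(v)$. This is $\C$-linear and injective, since $\xi(v)=0$ with $\xi\neq 0$ would force $v=\xi^{-1}\xi(v)=0$. Hence $\dim_\C D\leq \dim_\C M$, which is countable by hypothesis.

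Now suppose for contradiction that some $\xi\in D$ is not of the form $\lambda\cdot \mathrm{id}_M$. Then for every $\lambda\in\C$, the element $\xi-\lambda$ is nonzero in the division ring $D$, hence invertible there. I claim the uncountable family $\{(\xi-\lambda)^{-1}:\lambda\in\C\}\subseteq D$ is $\C$-linearly independent, which will contradict countability of $\dim_\C D$. For this I first note that the commutative subring $\C[\xi]\subseteq D$ is an integral domain; if $\xi$ satisfied a nonzero polynomial, then over the algebraically closed field $\C$ we could factor it as a product of linear terms $\xi-\alpha_i$, each of which is either zero or invertible, forcing $\xi=\alpha_i$ for some $i$, contrary to assumption. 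Therefore $\C[\xi]$ is a polynomial ring in $\xi$, embedded in its field of fractions inside $D$, and the standard partial-fractions argument applies: given any finite relation $\sum_{i=1}^n c_i(\xi-\lambda_i)^{-1}=0$ with distinct $\lambda_i$, multiplying through by $\prod_j(\xi-\lambda_j)$ yields the polynomial identity $\sum_i c_i\prod_{j\neq i}(\xi-\lambda_j)=0$, and evaluating the corresponding polynomial identity in $\C[x]$ at $x=\lambda_k$ gives $c_k\prod_{j\neq k}(\lambda_k-\lambda_j)=0$, so $c_k=0$.

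The main obstacle is the linear-independence step, which is really the heart of Dixmier's contribution beyond the classical Schur's lemma; the delicate point is justifying that $\C[\xi]$ has no polynomial relations, and this is exactly where algebraic closedness of $\C$ together with the division-ring structure of $D$ comes in. Once that is in place, everything else is routine.
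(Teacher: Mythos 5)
Your proof is correct and complete. Note that the paper does not actually prove this lemma; it is quoted as a known result with a citation to Dixmier's \emph{Enveloping Algebras} [2.6.5], so there is no in-paper argument to compare against. What you have written is the standard proof of Dixmier's version of Schur's lemma --- commutant is a division ring, evaluation at a nonzero vector bounds its dimension by the countable $\dim_\C M$, and the uncountable linearly independent family $\{(\xi-\lambda)^{-1}:\lambda\in\C\}$ rules out any non-scalar $\xi$ --- and you correctly isolate the two places where the hypotheses enter: algebraic closedness of $\C$ forces a non-scalar $\xi$ to be transcendental, and uncountability of $\C$ against countability of $\dim_\C M$ yields the contradiction.
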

In particular, the center of $A$ acts
as scalar operators on $M$. We will
use this result so often that we will
not mention it explicitly.

\section{Weight modules and finite dimensional simple modules}
In this section we describe weight modules;
for our purposes, we are especially interested in 
universal weight
modules and finite-dimensional modules
(which are instances of weight modules).
When simple, these modules provide
almost all of our primitive ideals. In
some cases, we do have to
consider modules that are not weight
modules, but such cases are exceptional,
and we will deal with them when they
arise.

Finite dimensional simple modules have been 
classified in~\cite[Section 4]{CassidyShelton}, so
all we have to do here is figure out their annihilators.

\subsection{Universal weight Modules}\label{universal}
We first recall the definition of weight modules.
If $M$ is an $L$-module, then $v\in M$
is said to have weight $(\lambda,\beta)\in\C^2$
if $h\cdot v =\lambda v$ and $(ud)\cdot v = \beta v$.
The \emph{weight space} $M_{(\lambda,\beta)}$ 
is the linear space consisting of
all elements of $M$ with weight $(\lambda,\beta)$.
The module $M$ is a \emph{weight module} if
it is the (direct) sum of its weight spaces.  

There is a nice relation between weight modules
and the grading of $L$ described in section~\ref{grading}.
We need to recall the (invertible) operation on weights 
given by $\Phi: (\lambda,\beta)\mapsto
(r\lambda+\gamma, s\beta+\phi(\lambda))$. 
(See~\cite[section 4]{CassidyShelton}.) Then
a simple calculation shows that if $M$
is a weight module, then $L_iM_{(\lambda,\beta)}
\subseteq M_{\Phi^i(\lambda,\beta)}$
($i\in \Z$), i.e., elements of degree $i$
transform vectors of weight $(\lambda,\beta)$
into vectors whose weights is $i$ steps
away. 

There exist universal weight modules,
which we now describe. Let $(\lambda,\beta)$
be an arbitrary weight, and define
$(\lambda_i,\beta_i)=\Phi^i(\lambda,\beta)$
($i\in\Z$) as above. Then the universal
weight module $W(\lambda,\beta)$ is
the module with basis $\{v_i: i\in\Z\}$ and
\begin{gather*}
hv_i=\lambda_iv_i, \quad i\in\Z;\\
uv_i=v_{i+1},\quad dv_{-i}=v_{-i-1},\quad i\geq 0;\\
uv_{-i}=\beta_{-i+1}v_{-i+1}, \quad dv_i=\beta_iv_{i-1},
\quad i>0.
\end{gather*}
Any weight module is a quotient of a universal
weight module. (It is possible to describe
universal weight modules more succinctly
as a tensor product, but since we want to
do explicit calculations on these modules,
it is better to have explicit formulas for the
action of $L$.)

We will pay a lot of attention to whether
$W(\lambda,\beta)$ is simple. A straightforward
result along these lines is as follows. If
the weights $(\lambda_i,\beta_i)$ are
all distinct, and $\beta_i\neq 0$ for all
$i\in\Z$, then $W(\lambda,\beta)$
is simple. 

\subsection{Finite-dimensional modules}
We  now turn to finite-dimensional modules.
All simple finite-dimensional modules are
weight modules, and hence quotients
of the universal weight modules. We need, however,
to have more detailed information.
It turns out that there are
two types of finite-dimensional simple modules:
those with highest (and lowest) weights, and
those that are cyclic.

The highest weight simple modules can
be described as follows. Start with
a weight $(\lambda, 0)$, and as before write
$(\lambda_i,\beta_i)$ for  $\Phi^{i}(\lambda,0)$.
(Thus $\lambda_0=\lambda$ 
and $\beta_0=0$ in this notation.) 
Suppose the weights $(\lambda_i,\beta_i)$
are all distinct, $\beta_{n+1}=0$, and
$\beta_i\neq 0$ for $1\leq i\leq n$.
Then there is a simple module of
dimension $n+1$, say with basis
 $\{v_0, v_1,\ldots,
v_n\}$.  Each $v_i$ is a vector
with weight $(\lambda_i,\beta_i)$. 
The action of $L$ on this module 
is
\begin{gather*}
hv_i=\lambda_i v_i,\quad 0\leq i\leq n;\\
dv_0=0, \quad dv_i=\beta_i v_{i-1}, \quad 1\leq i\leq n;\\
uv_n=0,\quad uv_i=v_{i+1}, \quad 0\leq i\leq n-1.
\end{gather*}
We denote this simple module by
$F_{\text{hw}}(\lambda)$, similar to the
notation in~\cite{CassidyShelton}.

There are two types of cyclic simple modules, denoted
by $F_c(\zeta,\rho)$ and $\overline{F}_c(\zeta,\rho)$
in~\cite[Definition 4.6]{CassidyShelton}. We describe
$F_c(\zeta,\rho)$ first. Start with a weight 
$(\lambda_0,\beta_0)$, and suppose its orbit
under $\Phi$
is finite; say the orbit has $m$ distinct values
$(\lambda_i,\beta_i)$, $0\leq i\leq m-1$. Assume
that $(\lambda_i,\beta_i)\neq (0,0)$ for any $i$. Denote
this set of weights by $\zeta$. Let $\rho$ be
a nonzero complex number. Then there
is a simple module $F_c(\zeta,\rho)$ of dimension $m$
with basis, say,  
 $\{v_0,v_1,\ldots, v_{m-1}\}$, 
where the basis vectors are indexed
by the cyclic group $\Z/m\Z$. Thus, for example,
$v_{m+1}$ is the same as $v_1$. Each $v_i$ is
vector of weight
 $(\lambda_i,\beta_i)$. Then action of
$L$ on $F_c(\zeta,\rho)$ is
\[
hv_i=\lambda_i v_i,\quad
uv_i=\rho v_{i+1},\quad
dv_i=\beta_i v_{i-1}.
\]

The modules $\overline{F}_c(\zeta,\rho)$ are similar.
The parameters and the weights are the same;
the only difference with $F_c(\zeta,\rho)$ is in
the action of $L$:
\[
hv_i=\lambda_i v_i,\quad
uv_i=\beta_{i+1} v_{i+1},\quad
dv_i=\rho v_{i-1}.
\]
There is usually a lot of overlap between $F_c(\zeta,\rho)$
and $\overline{F}_c(\zeta,\rho')$ as $\rho$ and $\rho'$
vary over $\C^\times$, but we need to consider both types of cyclic modules
since $F_c(\zeta,\rho)$ and $\overline{F}_c(\zeta,\rho')$
are nonisomorphic when $\prod_{i\in\Z/m\Z} \beta_i = 0$. 

\subsection{Annihilators of finite-dimensional simple  modules}

We now figure out the annihilators of these simple modules.
First we look at $F_{\text{hw}}(\lambda)$. In this case,
let $J_\lambda=\{f\in L_0=\C[h,ud] : f(\lambda_i,\beta_i)=0, 0\leq i\leq n\}$.
Then $J_\lambda$ is a classical polynomial ideal; it is
finitely generated, and for specific values of $(\lambda_i,\beta_i)$
we can figure out a list of its generators.
\begin{prop}
The annihilator of $F_{\text{hw}}(\lambda)$ is
$\langle u^{n+1}, d^{n+1}, J_\lambda\rangle$. 
\end{prop}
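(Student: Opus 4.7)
The containment $I := \langle u^{n+1}, d^{n+1}, J_\lambda \rangle \subseteq \ann F_{\text{hw}}(\lambda)$ is immediate: $u^{n+1}$ and $d^{n+1}$ annihilate each basis vector by iterating $u v_n = 0$ and $d v_0 = 0$, while each $f \in J_\lambda$ satisfies $f(h, ud) v_i = f(\lambda_i, \beta_i) v_i = 0$. For the reverse containment, I would first use the distinctness of the weights $(\lambda_i, \beta_i)$ to show $\ann F_{\text{hw}}(\lambda)$ is a graded ideal: writing $x = \sum_j x_j$ in its graded pieces, each $x_j v_k$ lies in the weight space for $(\lambda_{k+j}, \beta_{k+j})$ (or is zero), and linear independence of the $v_m$ forces $x_j v_k = 0$ for every $j$ and $k$. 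Thus it suffices to treat homogeneous $x$.

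If $x \in L_j$ with $j \ge n+1$, then $x \in u^j L_0 \subseteq \langle u^{n+1} \rangle \subseteq I$, and symmetrically for $j \le -(n+1)$. When $0 \le j \le n$, I write $x = u^j f(h, ud)$; a direct computation gives $x v_i = f(\lambda_i, \beta_i) v_{i+j}$ for $0 \le i \le n-j$ and $x v_i = 0$ otherwise, so the annihilation condition reduces to $f$ vanishing at the first $n-j+1$ weights $(\lambda_i, \beta_i)$, $0 \le i \le n-j$. The case $-n \le j < 0$ is handled symmetrically via $x = g(h, ud) d^{|j|}$.

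The heart of the proof is the following claim: if $f \in \C[h, y]$ vanishes at $(\lambda_i, \beta_i)$ for $0 \le i \le n-j$, then $u^j f(h, ud) \in I$. The key element is $P(h, ud) := u^{n+1-j} d^{n+1-j}$, which lies in $L_0 = \C[h, ud]$. Evaluating on the module one finds $P(\lambda_i, \beta_i) = 0$ for $i \le n-j$ (since $d^{n+1-j} v_i = 0$) and $P(\lambda_i, \beta_i) = \beta_i \beta_{i-1} \cdots \beta_{i-n+j} \ne 0$ for $i \ge n-j+1$ (all these $\beta$-indices lie in $[1, n]$). Moreover, $u^j \cdot P(h, ud) = u^{n+1} d^{n+1-j} \in \langle u^{n+1} \rangle \subseteq I$. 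For each $i_0 \in \{n-j+1, \ldots, n\}$, using that the weights are distinct points of $\C^2$, I pick a polynomial $q_{i_0}(h, ud)$ vanishing at $(\lambda_i, \beta_i)$ for $i \in \{n-j+1, \ldots, n\} \setminus \{i_0\}$ and nonzero at $(\lambda_{i_0}, \beta_{i_0})$; then $u^j P q_{i_0} \in I$. A Lagrange-type linear combination produces $\tilde f \in \C[h, ud]$ with $\tilde f(\lambda_i, \beta_i) = f(\lambda_i, \beta_i)$ for $i > n-j$ and $\tilde f(\lambda_i, \beta_i) = 0$ for $i \le n-j$, so that $u^j \tilde f \in I$, while $f - \tilde f$ vanishes at all $n+1$ weights and thus lies in $J_\lambda$. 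Hence $u^j f = u^j \tilde f + u^j(f - \tilde f) \in I$. The $j < 0$ case uses the dual element $P(h, ud) \cdot d^{|j|} = u^{n+1-|j|} d^{n+1} \in \langle d^{n+1} \rangle \subseteq I$ in the same way.

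The main subtlety will be the module computation establishing the vanishing pattern of $P(h, ud)$, but this is routine once one tracks how $u^{n+1-j} d^{n+1-j}$ acts on each weight vector $v_i$; everything else reduces to standard polynomial interpolation at distinct points of $\C^2$.
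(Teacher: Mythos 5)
Your proof is correct, but it takes a genuinely different route from the paper's. The paper also reduces to homogeneous elements (via the same observation that $L_iv_j\subseteq\C v_{j+i}$), but then finishes with a single trick: the auxiliary element $x=u+d^n$, which permutes the basis cyclically and hence has a power acting as the nonzero scalar $\eta=\prod_{i=1}^n\beta_i$. Multiplying a homogeneous annihilating element $y$ of degree $-k$ by $x^k$ lands it in $L_0+I$, where the $J_\lambda$ condition forces it into $I$; multiplying by further powers of $x$ then recovers $\eta y\in I$, using that $x^i\equiv u^i$ modulo lower-degree terms and $I$. You instead work degree by degree: you pin down exactly which polynomials $f$ make $u^jf(h,ud)$ an annihilator (vanishing at the first $n-j+1$ weights), and then exhibit each such element inside $I$ by Lagrange interpolation anchored at $P=u^{n+1-j}d^{n+1-j}$, whose value $\beta_i\cdots\beta_{i-n+j}$ is nonzero precisely at the remaining weights and which satisfies $u^jP\in\langle u^{n+1}\rangle$. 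Both arguments ultimately rest on the nonvanishing of $\beta_1,\dots,\beta_n$ and the distinctness of the weights. The paper's version is shorter and avoids interpolation; yours is more transparent about why these particular generators suffice (it effectively computes the annihilator in each graded degree), handles the $n=0$ case uniformly rather than separately, and treats positive and negative degrees by an honest symmetry. One cosmetic remark: your reduction to homogeneous elements does not actually need distinctness of the weights, only that $x_jv_k\in\C v_{k+j}$ and that the $v_m$ are linearly independent.
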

\begin{proof}
Write $I$ for the ideal $\langle u^{n+1}, d^{n+1}, J_\lambda\rangle$. 
It is straightforward to verify that $I$ annihilates $F_{\text{hw}}(\lambda)$;
we need to show the reverse inclusion
$\ann F_{\text{hw}}(\lambda)\subseteq I$. 

If $n=0$, then $F_{\text{hw}}(\lambda)$ is one
dimensional; $u$ and $d$ act as the zero
operator while $h$ acts as the scalar $\lambda$.
The ideal $J_\lambda$ is generated by $ud$ and
$h-\lambda$, so in this case, $I=\langle
u,d,h-\lambda\rangle$. It is clear that $I$
is the annihilator of $F_{\text{hw}}(\lambda)$.
Thus we assume that $n\geq 1$ from now on. 

Now suppose $y$ is an element of $L$ that annihilates
$F_{\text{hw}}(\lambda)$. Since $L_iv_j\subseteq \C v_{j+i}$,
it does no harm to assume that $y$ is homogeneous, say of degree
$-k<0$. (If $y$ is homogeneous of positive degree,
the proof is similar.)

We'll utilize the element $x=u+d^n$. Note that $xv_i=v_{i+1}$
for $0\leq i\leq n-1$, but $xv_n=\beta_1\cdots \beta_n v_0$;
thus $x^n v_i=\eta v_i$, where $\eta=\prod_{i=1}^n \beta_i$
is nonzero. So $x^n$ acts a nonzero scalar on $F_{\text{hw}}(\lambda)$.

Note also that $x^2=u^2+ud^n+d^nu+d^{2n}\in u^2 + L_{1-n} + I$;
by induction we can show that $x^i\in u^i + L_{i-1-n} + I$ for
$1\leq i\leq n$.

Recall that $y\in d^kL_0$. Thus $yx^k\in (u^k+L_{k-1-n}+I)
(d^kL_0)\in L_0+I$. In other words, $yx^k\in f(h,ud)+I$, where
$f$ is a polynomial in two variables. But $yx^k$ annihilates
everything in $F_{\text{hw}}(\lambda)$, 
so $f(h,ud)$ also annihilates everything,
and hence $f(h,ud)$ must be in $J_\lambda$. So $yx^k$ is actually
in $I$.

Thus modulo $I$, we have $0\equiv yx^k\equiv yx^kx^{n-k}
\equiv yx^n\equiv \eta y$. Since $\eta\neq 0$, we conclude
that $y\equiv 0$, which is what we want to show.
\end{proof}

We now take a look at the annihilators of $F_c(\zeta,\rho)$
and $\overline{F}_c(\zeta,\rho)$. We again
utilize the ideal $J_\zeta=\{f(h,ud): f(\lambda_i,\beta_i)=0,
i\in\Z/m\Z\}$ in $L_0$. Write $\eta=\prod_{i=0}^{m-1}
\beta_i$.
\begin{prop}
The annihilator of $F_c(\zeta,\rho)$ is 
$\langle u^m-\rho^m, d^m-\eta, J_\zeta\rangle$, and 
the annihilator of $\overline{F}_c(\zeta,\rho)$ is 
$\langle d^m-\rho^m, u^m-\eta, J_\zeta\rangle$.
\end{prop}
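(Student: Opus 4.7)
The plan mirrors the proof of Proposition 2.1. The easy inclusion $I \subseteq \ann F_c(\zeta, \rho)$ is immediate: on each basis vector $v_i$, cyclically $u^m$ acts as $\rho^m$, $d^m$ acts as $\eta$, and any $f \in J_\zeta$ vanishes at $(\lambda_i, \beta_i)$. The work is the reverse inclusion, and I would handle both $F_c(\zeta, \rho)$ and $\overline{F}_c(\zeta, \rho)$ with a single argument, swapping the roles of $u$ and $d$ at the end.

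Given $y \in \ann F_c(\zeta, \rho)$, decompose $y = \sum_i y_i$ with respect to the $\Z$-grading of $L$ and group by residues mod $m$, setting $z_k = \sum_{i \equiv k \pmod m} y_i$ for $0 \leq k \leq m-1$. Since $L_i$ shifts the cyclic weight index by $i \pmod m$, the vectors $z_k v_j$ for varying $k$ lie in distinct one-dimensional weight spaces of $F_c(\zeta, \rho)$, so each $z_k$ individually annihilates the module; it suffices to show $z_k \in I$. Using $u^m - \rho^m \in I$ and $d^m - \eta \in I$, one can collapse $u^{k+nm} f(h, ud) \equiv \rho^{nm} u^k f(h, ud)$ and $g(h, ud) d^{nm-k} \equiv \eta^{n-1} g(h, ud) d^{m-k}$ modulo $I$. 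Summing, $z_k \equiv u^k A(h, ud) + B(h, ud) d^{m-k} \pmod I$ for $1 \leq k \leq m-1$, and $z_0 \equiv C(h, ud) \pmod I$. The case $k = 0$ is then immediate since $C \in L_0 \cap \ann F_c = J_\zeta \subseteq I$.

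For $1 \leq k \leq m-1$, the key move is right-multiplication by $u^{m-k}$. Using that $L_0$ commutes past $u^{m-k}$ up to a polynomial automorphism $\sigma$ of $L_0 = \C[h, ud]$, we get $u^k A \cdot u^{m-k} = u^m \sigma(A)$, which is congruent to $\rho^m \sigma(A) \in L_0$ modulo $I$. Simultaneously, $B d^{m-k} \cdot u^{m-k} = B \cdot (d^{m-k} u^{m-k})$, and $d^{m-k} u^{m-k}$ is already in $L_0$ since it has degree zero. Hence $z_k u^{m-k}$ reduces modulo $I$ to an element of $L_0$ which must then lie in $J_\zeta \subseteq I$ (as it annihilates $F_c(\zeta, \rho)$). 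Finally, $\rho^m z_k \equiv z_k u^m = (z_k u^{m-k}) u^k \in I$, giving $z_k \in I$ since $\rho \neq 0$. The argument for $\overline{F}_c(\zeta, \rho)$ is completely symmetric: use $d^m \equiv \rho^m \neq 0$ and right-multiply by $d^k$ to reduce into $L_0$ and invert $d$ modulo the corresponding ideal.

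The main obstacle I anticipate is simply bookkeeping in the reduction step --- verifying that $z_k$ does collapse to the two-term canonical form, and that the automorphism $\sigma$ produces the clean identity $u^k A \cdot u^{m-k} = u^m \sigma(A)$. What makes the proof go through uniformly, without any case split on whether $\eta = 0$, is that only $\rho^m$ needs to be nonzero to invert the relevant generator modulo $I$, and this is guaranteed by the hypothesis $\rho \in \C^\times$. The need to state the two modules separately arises from exactly this asymmetry: in $F_c$ only $u$ is invertible modulo $I$, while in $\overline{F}_c$ only $d$ is.
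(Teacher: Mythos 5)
Your proposal is correct and follows essentially the same route as the paper's proof: reduce to a single residue class mod $m$ (the paper's $L'_k$), use $u^m-\rho^m$ and $d^m-\eta$ to collapse to $u^kL_0+d^{m-k}L_0$, right-multiply by $u^{m-k}$ to land in $L_0+I$ and hence in $J_\zeta+I$, and finish by inverting $u^m\equiv\rho^m$; the symmetric argument handles $\overline{F}_c(\zeta,\rho)$. Your added justification of the residue-class reduction via the distinct one-dimensional weight spaces, and your remark that only $\rho^m\neq 0$ is needed (so no case split on $\eta=0$), are correct refinements of what the paper leaves implicit.
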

\begin{proof}
The proof is similar to the previous proof. As before,
it is clear that $I$ annihilates $F_c(\zeta,\rho)$. We
want to show that $\ann F_c(\zeta,\rho)\subseteq I$.

Write 
$L'_k = \sum_{i\equiv k \text{ mod } m} L_i $. Then 
$L'_k v_i\subseteq
\C v_{i+k}$. Suppose now that $y$ annihilates 
$F_c(\zeta,\rho)$. It does no harm to
assume that $y\in L'_k$ for some $k\in\Z$. Since
$u^{am+k}\in \rho^{am}u^k+I$ and $d^{bm+(m-k)}
\in \eta^b d^{m-k}$, we can even assume that
$y\in u^kL_0+d^{m-k}L_0$ where $0\leq k\leq m-1$. 

Now $yu^{m-k}$ also annihilates $F_c(\zeta,\rho)$,
and $yu^{m-k}\in L_m+L_0 \subseteq L_0+I$. 
Therefore we can write $yu^{m-k} = f(h,ud)+I$
where $f$ is a polynomial of two variables.
Since $yu^{m-k}$ annihilates everything,
we conclude that $f\in J_\zeta$, i.e., 
$yu^{m-k}\in I$.

Therefore $yu^{m-k} u^k$ is also in $I$, and
so modulo $I$, we have $0\equiv yu^m\equiv
y\rho^m$. Because $\rho\neq 0$, we conclude
that $y\equiv 0$, as required.

The proof for $\overline{F}_c(\zeta,\rho)$ is
similar.
\end{proof}

\subsection{Detecting finite-dimensionality}
It is also useful to be able to tell
whether a simple module is finite-dimensional
from partial information about its  annihilator.
\begin{lem}\label{findim}
Let $M$ be a simple module such
that (i) either $d^m$ or $u^m$ ($m\geq 1$) 
acts as a scalar on $M$; (ii) $M$
contains a weight vector. Then 
$M$ is finite-dimensional.
\end{lem}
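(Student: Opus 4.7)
The plan is to exploit the surjection from the universal weight module. First, I will show that $M$ is itself a weight module: the sum of weight spaces of $M$ is $L$-invariant (since $L_i\cdot M_{(\mu,\nu)}\subseteq M_{\Phi^i(\mu,\nu)}$) and nonzero by hypothesis, so by simplicity it equals $M$. Pick a weight vector $v\in M$ of weight $(\lambda,\beta)$; by simplicity $M=Lv$, and the map $v_0\mapsto v$ extends to a surjection $\pi\colon W(\lambda,\beta)\twoheadrightarrow M$. Hence $M$ is the linear span of $\{\pi(v_i):i\in\Z\}$. Assume without loss of generality that $u^m$ acts as a scalar $c$ on $M$ (the $d^m$ case is symmetric).

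Since $u^m-c$ annihilates $M$, we have $(u^m-c)v_i\in\ker\pi$ for every $i\in\Z$, which yields explicit dependencies among the $\pi(v_i)$. For $i\ge 0$, $u^mv_i=v_{i+m}$, giving $\pi(v_{i+m})=c\,\pi(v_i)$; for $i<0$, a direct computation shows $u^mv_i$ equals a product of $\beta$'s times $v_{i+m}$ (with the product depending on whether the range of indices passes through $0$), giving an analogous relation. If $c\neq 0$, these relations can be iterated in both directions to express every $\pi(v_j)$, $j\in\Z$, as a scalar multiple of some $\pi(v_{j'})$ with $0\le j'\le m-1$: positive indices reduce via $\pi(v_{i+m})=c\pi(v_i)$, and negative indices are shifted upward by $m$ until they land in $[0,m-1]$. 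Thus $M=\spa\{\pi(v_0),\ldots,\pi(v_{m-1})\}$ and $\dim M\le m$.

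If $c=0$, then $u^m$ kills $M$, so in particular $u^mv=0$. Let $j\ge 1$ be minimal with $u^jv=0$ and set $w=u^{j-1}v$, a nonzero weight vector with $uw=0$. By simplicity $M=Lw$, and since $uw=0$ the PBW basis forces $M=\spa\{w_k:=d^kw:k\ge 0\}$. If some $w_k=0$, then $M=\spa\{w_0,\ldots,w_{k-1}\}$ is finite-dimensional. Otherwise, using that weight spaces of $M$ are at most one-dimensional (inherited from $W(\lambda,\beta)$ in the infinite-orbit case, and requiring additional structural argument in the finite-orbit case), write $uw_k=\alpha_kw_{k-1}$ where $\alpha_k$ equals the $(ud)$-eigenvalue on $w_{k-1}$; then $u^mw_k=\alpha_k\alpha_{k-1}\cdots\alpha_{k-m+1}w_{k-m}=0$ for $k\ge m$, which since $w_{k-m}\ne 0$ forces some $\alpha_{i_1}=0$ with $1\le i_1\le m$, i.e.\ $uw_{i_1}=0$. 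Hence $Lw_{i_1}=\spa\{w_k:k\ge i_1\}$ equals $M$ by simplicity. Iterating produces $i_1<i_2<\cdots$ with $M$ equal to each tail span; this forces $M$ to be finite-dimensional (in the infinite-orbit case by direct contradiction, since the weights of $w_0,\ldots,w_{i_n-1}$ do not appear among those of $\{w_k:k\ge i_n\}$; in the finite-orbit case because an infinite linearly independent sequence cannot coincide with its own tail truncations, forcing each weight space to be finite-dimensional). The main obstacle is the $c=0$, finite-orbit subcase, where ensuring one-dimensional weight spaces and extracting a contradiction requires analyzing $M_\mu$ as a simple module over the subalgebra $\bigoplus_jL_{jp}$ (with $p$ the orbit period).
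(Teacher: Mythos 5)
Your argument for the case where the scalar is nonzero is correct and is a clean variant of the paper's (the paper instead spans $M$ by $v, uv, \ldots, u^{m-1}v$ and checks directly that this span is $L$-stable). In the scalar-zero case you also start the same way as the paper: produce a nonzero weight vector $w$ killed by $u$, observe $M=\spa\{w_k\}$ with $w_k=d^kw$, and use $u^mw_m=\alpha_m\cdots\alpha_1w_0=0$ to force some $\alpha_{i_1}=0$, hence $M=\spa\{w_k:k\geq i_1\}$. (A side remark: your worry that writing $uw_k=\alpha_kw_{k-1}$ requires one-dimensional weight spaces is unfounded --- $uw_k=(ud)w_{k-1}$ and $w_{k-1}$ is a weight vector, so this identity is automatic.)

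The genuine gap is exactly the one you flag: the finite-orbit subcase of $c=0$, where you propose iterating to get tails $M=\spa\{w_k:k\geq i_t\}$ and then appeal to an unproven claim about linearly independent sequences and their truncations, or to an analysis of $M_\mu$ over $\bigoplus_jL_{jp}$ that you do not carry out. The paper closes this case with a short linear-algebra descent that needs no hypothesis on the weights and no iteration: since $w_0\in\spa\{w_k:k\geq i_1\}$, write $w_0=\sum_{t}c_tw_{n_t}$ with $c_t\neq 0$ and $n_1<\cdots<n_T$, solve for the top term $w_{n_T}$ as a combination of $w_0$ and the $w_{n_t}$ with $t<T$, and apply $d$ repeatedly (in your indexing; $u$ in the paper's): by induction every $w_j$ with $j\geq n_T$ lies in $\spa\{w_i:i<n_T\}$, so $M=\spa\{w_i:i<n_T\}$ is finite-dimensional. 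Substituting this step for your final paragraph completes the proof; as written, the proposal does not establish the lemma in the finite-orbit, scalar-zero case.
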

\begin{proof}
We suppose that $d^m$ acts as the scalar
$\rho^m$. (The proof where $u^m$ acts as
a scalar is similar.) We first consider
the case where $\rho=0$.

In this case the first step is to show
that $M$ contains an element with
weight $(\lambda,0)$. Recall that
if $v$ is a weight vector, then 
$d^iv$ is also a weight vector,
for all $i\geq 0$. Since $d^mv=0$,
there must be an $j\geq 0$ such
that $d^jv\neq 0$ and $d^{j+1}v=0$.
Then $w_0=d^jv$ is an element
of weight $(\lambda,0)$ for some
$\lambda\in\C$.

Now define $w_i=u^iw_0$, $i\geq 0$.
It is straightforward to
check that $\spa \{w_i: i\geq 0\}$ is stable 
under $u$, $d$,
and $h$; since $M$ is simple,  
we have $M=\spa \{w_i: i\geq 0\}$. 

Note that $dw_i=\beta_iw_{i-1}$ for
some $\beta_i\in\C$. Thus
we have
$0=d^mw_{m}=\beta_{m}\beta_{m-1}\cdots
\beta_{1}w_0$. Since
$w_0\neq 0$, we have or $\beta_k=0$ for
some $1\leq k\leq m$. Then $M_0=\spa
\{w_{k+i}: i\geq 0\}$ is a submodule
of $M$, and hence must be $0$ or $M$.
If $M_0=0$, then $M$ is spanned by
$\{w_0,\cdots,w_{k-1}\}$ and so
is finite-dimensional. If $M_0=M$, then in particular
$w_0$ is a linear combination of the
elements $w_{k+i}$, $i\geq 0$. Say
$w_0=\sum_{i=1}^t c_iw_{n_i}$,
where $c_i\neq 0$ and we arrange
the indices so that $n_i>n_j$ whenever
$i>j$. Then $w_{n_t}$ is a linear
combination of $w_0$ and
the elements $w_{n_i}$, $i<t$.
Applying the operator $u$, 
we see that any $w_j$, with $j\geq n_t$,
is a linear combination of elements
$w_i$, with $i<n_t$. Thus $M$
is spanned by $\{w_i: i<n_t\}$,
and hence finite-dimensional.

We now consider the case where
$\rho\neq 0$. As before, suppose
$v\in M$ is a weight vector.  
Let $v_i=u^iv$  for
$0\leq i\leq m-1$. Each $v_i$
is still a weight vector;
in particular, $hv_i\in\C v_i$ for all values
of $i$. 
Since $v_0$ is an eigenvector of $ud$,
we check easily that $dv_i\in \C_{i-1}$
for $1\leq j\leq m-1$. Since $u^m$ acts
as $\rho^m$ on $M$, we get that
$uv_{m-1}=\rho^m v_0$.
Thus $dv_0=\rho^{-m} duv_{m-1}=
\rho^{-m} [sud+\phi(h)]v_{m-1}
\in\C v_{m-1}$. 
Therefore the span of 
$\{v_i: 0\leq i\leq m-1\}$
is stable under $L$, and hence
must be all of $M$. Thus $M$
is finite-dimensional.
\end{proof}
\begin{cor}\label{findimcor}
Let $M$ be a simple module, and suppose
for some $m\geq 1$, we have
$d^m\in\ann M$, $d^{m-1}\not\in \ann M$,
and $f(h)d^{m-1}M=0$ for some
nonconstant polynomial $f$. 
Then $M$ is finite-dimensional.
(The same conclusion holds if
$d$ is replaced by $u$.)

In particular, 
if $d^mu-s^mud^m\neq 0$, 
then $M$ is finite-dimensional.
(Similarly, if $u^m\in\ann M$, $u^{m-1}\not\in \ann M$,
and $du^m-s^mu^md\neq 0$, then $M$
is finite-dimensional.)
\end{cor}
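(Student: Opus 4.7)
The plan is to reduce both assertions to Lemma~\ref{findim}. Since $d^m\in\ann M$, the element $d^m$ already acts on $M$ as the scalar $0$, so condition~(i) of Lemma~\ref{findim} is automatically satisfied. The only remaining task is to produce a weight vector in $M$; once this is done, finite-dimensionality follows immediately from the lemma.

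To construct the weight vector, first pick $v\in M$ with $w:=d^{m-1}v\neq 0$, which is possible because $d^{m-1}\notin\ann M$. Then $dw=d^{m}v=0$ and, by hypothesis, $f(h)w=0$. The second identity says that $\C[h]\cdot w$ is a nonzero finite-dimensional $h$-stable subspace, so since $\C$ is algebraically closed, it contains an $h$-eigenvector: there exist $\lambda\in\C$ and a polynomial $p$ with $w':=p(h)w\neq 0$ and $hw'=\lambda w'$. The relation $dh=(rh+\gamma)d$ iterates to $d\,p(h)=p(rh+\gamma)\,d$, so $dw'=p(rh+\gamma)\,dw=0$, and in particular $(ud)w'=0$. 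Hence $w'$ has weight $(\lambda,0)$, as required. The version with $u$ in place of $d$ is completely parallel, using $u\,p(h)=p\bigl(r^{-1}(h-\gamma)\bigr)u$ to verify $uw'=0$ and then $ud=s^{-1}\bigl(du-\phi(h)\bigr)$ to see that $(ud)w'$ is still a scalar multiple of $w'$.

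For the ``in particular'' assertion, a short induction on $m$ using $du=sud+\phi(h)$ shows that
\[
d^m u - s^m u\,d^m = Q_m(h)\,d^{m-1}
\]
for some polynomial $Q_m$. Because $d^mM=0$, both $(d^m u)M$ and $(u\,d^m)M$ vanish, so $Q_m(h)\,d^{m-1}M=0$. The hypothesis $d^m u-s^m u\,d^m\neq 0$ forces $Q_m\neq 0$, while $d^{m-1}\notin\ann M$ rules out $Q_m$ being a nonzero constant; hence $Q_m$ is nonconstant and the first part applies. The $u$-analogue is handled identically: an analogous induction gives $du^m-s^mu^md=u^{m-1}\tilde P_m(h)$, and one commutes $u^{m-1}$ past $\tilde P_m(h)$ to rearrange this as $f(h)u^{m-1}M=0$ with $f$ nonconstant.

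The only step in this plan that is not a routine calculation with the defining relations is the extraction of the weight vector $w'$: this is the point at which we trade a mere polynomial annihilation $f(h)w=0$ for an actual eigenvalue, using algebraic closure of $\C$, and where the intertwining identity $d\,p(h)=p(rh+\gamma)\,d$ guarantees that the property $dw=0$ is preserved under the passage from $w$ to $w'$.
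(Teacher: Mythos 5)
Your proposal is correct and follows essentially the same route as the paper: both extract an $h$-eigenvector from the finite-dimensional $h$-stable space spanned by $\{h^iw\}$ with $w\in d^{m-1}M$ nonzero, use $dp(h)=p(rh+\gamma)d$ to see the eigenvector is still killed by $d$ and hence is a weight vector, and then invoke Lemma~\ref{findim}; the ``in particular'' clause is handled by the identical induction $d^mu-s^mud^m=f_m(h)d^{m-1}$ together with the observation that a nonzero constant $f_m$ would force $d^{m-1}M=0$.
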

\begin{proof}
Suppose  $f$ has degree $n\geq 1$.
Pick a nonzero element $v\in d^{m-1}M$,
and let  $W=\spa \{v, hv, h^2v,\ldots,
h^{n-1}v\}$. Then $W$ is stable
under $h$, and since $W$ is
finite-dimensional, $h$ 
has an eigenvector in $W$, say $w$.
Note that $dhv=(rh+\gamma)dv=0$;
similarly, $dh^iv=0$ for all $i\geq0$.
Thus $dw=0$, and so $w$ is
a weight vector. We can now apply
the lemma. 

In the particular case, 
we show easily by induction
that, for $k\geq 1$,
\[
d^ku-s^kud^k=f_k(h)d^{k-1}
\]
for some polynomial $f_k$. The hypothesis
implies that $f_m(h)\neq 0$. Note that
$f_m(h)$ cannot be a constant since that
would imply $f_m(h)d^{m-1}M=0$, i.e.,
$d^{m-1}M=0$. Thus $f_m(h)$ is a
nonconstant polynomial, and 
the conclusion follows.
\end{proof}
It is also useful occasionally to detect when
a simple module is one-dimensional.
\begin{lem}\label{onedim}
Let $M$ be a simple module. Suppose 
$ud$ and $h$ act as scalars on $M$. Then
$M$ is one-dimensional.
\end{lem}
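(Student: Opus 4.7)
The plan is to show that $u$ and $d$, like $h$, act as scalars on $M$; together with the hypothesis on $h$ this gives $Lv \subseteq \C v$ for any nonzero $v \in M$, and simplicity then yields $M = \C v$.

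First I would exploit the hypothesis together with the defining relation. Write $\lambda$ and $\beta$ for the scalars by which $h$ and $ud$ act on $M$. Since $\phi(h)$ then acts as $\phi(\lambda)$, the relation $du - sud = \phi(h)$ shows that $du$ acts on $M$ as the scalar $\mu := s\beta + \phi(\lambda)$. The key step is the trivial associativity identity $(ud)u = u(du)$ in $L$: evaluated on an arbitrary $v \in M$, the left side is $\beta \cdot uv$ while the right side is $\mu \cdot uv$, so $(\beta - \mu)u$ acts as zero on $M$; by symmetry $(\beta - \mu)d$ acts as zero as well. If $\beta \neq \mu$, then both $u$ and $d$ would act as zero, which forces $\beta = ud|_M = 0$ and $\mu = du|_M = 0$, a contradiction. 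Hence $\beta = \mu$, so $[u,d]$ acts as zero on $M$.

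Finally, the endomorphism $u|_M$ commutes with the $L$-action on $M$: with $h$ because $h$ acts as a scalar, with itself trivially, and with $d$ by the previous step. Dixmier's version of Schur's lemma (Lemma~1.3) then forces $u|_M$ to be a scalar, and the same argument handles $d|_M$. Since every generator of $L$ acts on $M$ as a scalar, $Lv \subseteq \C v$ for any nonzero $v$, and simplicity gives $M = \C v$.

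The main subtlety is the equality $\beta = \mu$: it is not \emph{a priori} clear that $ud$ and $du$ must act by the same scalar, and the associativity identity above is what extracts it cleanly, avoiding a case split on whether $(1-r)\lambda = \gamma$ which one would otherwise be tempted to do using the auxiliary relations $hu - ruh = \gamma u$ and $dh - rhd = \gamma d$.
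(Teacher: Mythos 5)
Your proof is correct. It rests on the same key observation as the paper's: the element $du=sud+\phi(h)$ acts as a scalar $\mu$, and bracketing $udu$ two ways forces $\mu$ to coincide with the scalar $\beta$ by which $ud$ acts. The difference is in the packaging. The paper first builds an explicit spanning set $w_i=u^iw_0$, $w_{-i}=d^iw_0$, shows its span is all of $M$, and then verifies $duw_i=\beta w_i$ case by case on these vectors; the comparison $uduw_i=\alpha w_{i+1}=\beta w_{i+1}$ tacitly needs $w_{i+1}\neq 0$, a degenerate case your version disposes of explicitly (if $\beta\neq\mu$ then $u$ and $d$ both kill $M$, forcing $\beta=\mu=0$, a contradiction). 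Your argument is basis-free: once $[u,d]$ acts as zero you apply Dixmier's form of Schur's lemma directly to $u|_M$ and $d|_M$, whereas the paper applies it (implicitly, as announced after Lemma~1.3) only at the very end after noting that $u$, $d$, $h$ commute as operators. Both routes are equally elementary; yours trades the explicit spanning-set computation for two invocations of Schur and is slightly more careful about the edge case where $u$ or $d$ annihilates $M$.
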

\begin{proof}
Say $ud$ acts as the scalar $\beta$
and $h$ acts the scalar $\lambda$. 
Pick a nonzero $w_0\in M$ and 
for $i\geq 0$ define
$w_i=u^iw_0$ and $w_{-i}=d^{i}w_0$.
For all $i\in\Z$, we clearly have
$uv_i\in\C v_{i+1}$; also, since $du=sud+\phi(h)$, 
we get that $dv_i\in \C v_{i-1}$ for $i\in \Z$.
Thus $\spa\{w_i: i\in\Z\}$ is a submodule
of $M$ and hence must be all of $M$.

If $i<0$, we have $uw_i=udw_{i+1}=\beta w_{i+1}$,
so $duw_i=\beta w_i$. If $i\geq 0$, then
$uduw_i=\beta uw_i=\beta w_{i+1}$; but
we also know that $duw_i=\alpha w_i$
for some $\alpha\in\C$, so 
$uduw_i=\alpha w_{i+1}$. Thus $\alpha=\beta$
and we conclude that $duw_i=\beta w_i$.

Therefore $ud$ and $du$ act as the same
scalar on $M$. Thus the operators $u$,
$d$, and $h$ all commute with each other,
and hence $M$ is one-dimensional. 
\end{proof}

\section{When $r=1$ and $\gamma\neq 0$}
In this section we tackle the case where $\gamma\neq 0$. Recall
from Lemma 1.1 that in this case it suffices to assume 
that $r=1$. 
We then have $hu=u(h+\gamma)$ and $hd=d(h-\gamma)$.
It follows easily that $f(h)u=uf(h+\gamma)$ and $f(h)d=df(h-\gamma)$
where $f$ is any polynomial; also, $hu^i=u^i(h+i\gamma)$
and $hd^i=(h-i\gamma)d^i$ for $i\geq 0$. We can
summarize succinctly by saying that
$f(h)x_i=x_if(h+i\gamma)$, where $x_i\in L_i$. 

\subsection{Homogeneous elements in ideals}
We start with a result about homogeneous elements.
\begin{lem}\label{homogen}
Let $I$ be an ideal of $L$.
Suppose $x\in I$ and $x=\sum_{i\in\Z} x_i$ (where
$x_i\in L_i$) is the homogeneous decomposition
of $x$. Then $x_i\in I$ for all $i\in \Z$. 
\end{lem}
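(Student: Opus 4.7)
The plan is to use the inner derivation $\operatorname{ad}(h)$ to separate the homogeneous components, relying on a Vandermonde argument. The key observation, following from the identity $f(h)x_i = x_i f(h+i\gamma)$ stated in the preamble to the section, is that taking $f(h)=h$ yields
\[
[h,x_i] = hx_i - x_ih = i\gamma\, x_i
\]
for any $x_i \in L_i$. In other words, $\operatorname{ad}(h)$ acts on $L_i$ as multiplication by the scalar $i\gamma$.

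Let $S = \{i \in \Z : x_i \neq 0\}$, which is finite, say of cardinality $N+1$. First I would compute that
\[
\operatorname{ad}(h)^k(x) = \sum_{i \in S} (i\gamma)^k\, x_i
\qquad (k=0,1,\dots,N).
\]
Since $\gamma \neq 0$ and the integers $i \in S$ are distinct, the scalars $\{i\gamma : i \in S\}$ are distinct, so the $(N+1)\times(N+1)$ matrix $\bigl((i\gamma)^k\bigr)_{k,\, i}$ is a Vandermonde matrix and hence invertible over $\C$. Therefore each $x_i$ ($i \in S$) is a $\C$-linear combination of the elements $\operatorname{ad}(h)^k(x)$, $k=0,1,\dots,N$.

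Finally, since $I$ is a two-sided ideal, $x \in I$ implies $[h,x] \in I$, and by induction $\operatorname{ad}(h)^k(x) \in I$ for every $k \geq 0$. Consequently, each $x_i$ lies in $I$, as required; components with index $i \notin S$ are zero and trivially in $I$.

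There is really no substantive obstacle here; the entire argument is linear algebra once one notices that $\operatorname{ad}(h)$ is diagonalizable on $L$ with the homogeneous components as eigenspaces and with distinct eigenvalues (because $\gamma \neq 0$). The only point one has to be mildly careful about is that the index $i=0$ is allowed in $S$, giving eigenvalue $0$; this is harmless since Vandermonde invertibility requires only distinct nodes, not nonzero ones.
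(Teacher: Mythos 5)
Your proposal is correct and rests on the same key fact as the paper's own proof: $\operatorname{ad}(h)$ acts on $L_i$ as the scalar $i\gamma$, and these scalars are distinct precisely because $\gamma\neq 0$ (the standing assumption of this section). The paper packages the separation of components as an induction on length, repeatedly applying the operator $x\mapsto xh+m\gamma x-hx$ to annihilate one component at a time, whereas you invert the Vandermonde system all at once; the two arguments are interchangeable.
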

\begin{proof}
We use induction on the length of $x$. (Recall that 
the length of $x$ is the number of
nonzero $x_i$s.)  
The lemma is certainly true for 
elements of length 1.
Assume it is true for elements of
length $n-1$, and let
$
x=\sum_{i\in\Z} x_i
$,
($x_i\in L_i$) be an element of length $n$ in $I$.
Thus exactly $n$ of the $x_i$s is nonzero.
Pick an integer
$m$ so that $x_m\neq 0$. Now 
\[
hx=\sum_{i\in\Z} hx_i =\sum_{i\in\Z} x_i (h+i\gamma), 
\]
so 
\[
xh+m\gamma x -hx 
=\sum_{i\in\Z} (m-i)\gamma x_i.
\]
This is an element in $I$ of length $n-1$.
We apply the induction hypothesis and conclude that
$(m-i)\gamma x_i\in I$ for $i\in \Z$, and
thus $x_i\in I$ for $i\neq m$. Therefore
$y=\sum_{i\neq m} x_i$ is an element of $I$,
hence so is $x_m=x-y$. 
\end{proof}
Recall from Lemma 1.2 that the algebra
$L(\phi,1,s,\gamma)$ is conformal. Thus there exists
a polynomial $\cp\in\C[x]$ such that $s\cp(x)-\cp(x+\gamma)
=\phi(x)$. Recall also that we define $H$ as the 
element $ud+\cp(h)$;
then
$Hu=suH$ and $dH=sHd$. We conclude
that $f(h,H)u
=uf(h+\gamma,sH)$ and $f(h,H)d
=df(h-\gamma, s^{-1}H)$ for any
polynomial $f\in\C[x,y]$. 

Note that  if $M$ is
an $L$-module, then 
$HM$ is stable under $u$ and $d$
(and certainly under $h$), so $HM$ is a
submodule. Thus if $M$ is simple,
then either $HM=0$ or $HM=M$. We treat
these two cases separately. 

\subsection{The case $HM=0$}\label{HM=0}
Suppose first that $HM=0$.
Of course, $\ann M$ then contains $\langle H\rangle$,
but if $\cp$ is the zero polynomial, we can say more.
In this case, $du=sud$, so $uM$ and $dM$ are
submodules of $M$, and hence either $uM=0$
or $uM=M$; similarly, either $dM=0$ or $dM=M$.
It cannot be the case that both $uM=M$
and $dM=M$, for then $HM=udM=M$,
contradicting $HM=0$. Thus when $\cp$
is the zero polynomial (and $HM=0$),
$\ann M$ must contain either $u$ or $d$.
\begin{lem}\label{containspower}
Suppose $M$ is simple and $HM=0$. If 
$\ann M\supsetneq\langle H\rangle$, then 
$\ann M$ contains a power of $u$
or a power of $d$. Furthermore, if $\cp$ is the zero
polynomial and $\ann M\supsetneq
\langle d\rangle$, then $\ann M$ contains
a power of $u$, while  if $\ann M\supsetneq 
\langle u\rangle$, then $\ann M$ 
contains a power of $d$. 
\end{lem}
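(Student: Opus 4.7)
My plan is to take an element of $\ann M$ that lies outside the target ideal ($\langle H\rangle$, $\langle d\rangle$, or $\langle u\rangle$), apply Lemma~\ref{homogen} to extract a homogeneous piece still outside that ideal, reduce modulo the ideal to obtain something of the form $u^kg(h)$ or $g(h)d^{-k}$ with $g\in\C[h]$ nonzero, and finally promote this to a pure power of $u$ or $d$ by a gcd argument that relies crucially on $\gamma\ne 0$.

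For the first assertion, suppose $y\in\ann M\setminus\langle H\rangle$. The relations $Hu=suH$ and $dH=sHd$ show that $H$ is a normal element, so $\langle H\rangle=HL=LH$ is graded with $\langle H\rangle\cap L_k=HL_k$. Lemma~\ref{homogen} supplies a homogeneous $y_k\in L_k\cap\ann M$ with $y_k\notin\langle H\rangle$. Using $L_k=u^k\C[h,H]$ for $k\ge 0$ (respectively $\C[h,H]d^{-k}$ for $k\le 0$) from Section~\ref{grading}, the residue of $y_k$ modulo $\langle H\rangle\subseteq\ann M$ is $u^kg(h)$ (or $g(h)d^{-k}$) with $g\ne 0$. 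To promote this to a power, sandwich between powers of $u$, using the commutation $f(h)u^b=u^bf(h+b\gamma)$:
\[
u^Ng(h+b\gamma)\in\ann M\qquad(0\le b\le N-k).
\]
The set $\{p\in\C[h]:u^Np(h)\in\ann M\}$ is a $\C[h]$-ideal containing all of $g(h),g(h+\gamma),\dots,g(h+(N-k)\gamma)$. Because $\gamma\ne 0$, once $N-k\ge\deg g$ these polynomials have no common root---a common root $\mu$ would force $\mu,\mu+\gamma,\dots,\mu+(N-k)\gamma$ all to be roots of $g$---so the ideal is $\C[h]$ and hence $u^N\in\ann M$. For $k<0$ the analogous computation with $d$ in place of $u$ yields $d^N\in\ann M$; for $k=0$ either choice works.

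For the second and third assertions, $\cp=0$ forces $\phi=0$, and then $d$ and $u$ are both normal ($du=sud$, $dh=(h+\gamma)d$, etc.), so $\langle d\rangle=Ld=dL$ and $\langle u\rangle=Lu=uL$ are graded. Moreover $L_i\subseteq\langle d\rangle$ for every $i<0$, so any $y\in\ann M\setminus\langle d\rangle$ has, via Lemma~\ref{homogen}, a homogeneous piece $y_i\in\ann M$ with $i\ge 0$ and $y_i\notin\langle d\rangle$; reduction modulo $\langle d\rangle$ gives $u^ig(h)\in\ann M$ with $g\ne 0$, and the same gcd argument delivers $u^N\in\ann M$. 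The case $\ann M\supsetneq\langle u\rangle$ is symmetric.

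The substantive obstacle is the gcd step, and this is exactly where the section's standing hypothesis $\gamma\ne 0$ is used essentially: only when $\gamma\ne 0$ do the translates $g(h+b\gamma)$ of a fixed nonzero polynomial become coprime as $b$ varies. Everything else is bookkeeping that flows from the normality of $H$ (and of $d,u$ when $\cp=0$) together with the explicit description of each $L_k$.
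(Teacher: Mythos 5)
Your proof is correct, and its skeleton---extract a homogeneous component of an annihilating element via Lemma~\ref{homogen}, note that the target ideal is graded by normality of $H$ (or of $u$, $d$ when $\cp=0$), and reduce modulo that ideal to an element $u^kg(h)$ with $g\neq 0$---is exactly the paper's. Where you genuinely diverge is in the final promotion to a pure power of $u$. The paper picks, among all elements $u^jf(h)$ of $\ann M$ with $f\neq 0$, one with $\deg f$ minimal and computes $xu-ux=u^{k+1}\bigl(f(h+\gamma)-f(h)\bigr)$; minimality forces $f(h+\gamma)=f(h)$, hence $f$ constant because $\gamma\neq 0$, so $u^k$ itself lies in $\ann M$. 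You instead sandwich to obtain $u^Ng(h+b\gamma)\in\ann M$ for $0\le b\le N-k$ and observe that once $N-k\ge\deg g$ these translates have no common root (again because $\gamma\neq 0$), so the $\C[h]$-ideal $\{p:u^Np(h)\in\ann M\}$ is all of $\C[h]$ and $u^N\in\ann M$. Both arguments are sound and exploit $\gamma\neq 0$ in the same essential way; the paper's descent yields the sharper exponent $k$, while your coprime-translates argument avoids the minimality bookkeeping and treats the degree-zero component uniformly.
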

\begin{proof}
When $\cp$ is not the zero
polynomial, let $I=\langle H\rangle$,
but when $\cp$ is the zero polynomial,
let $I$ denote either $\langle u\rangle$
or $\langle d\rangle$.  Suppose that
$\ann M$ strictly contains $I$.
Choose an element $x\in\ann M$
that is not in $I$; by Lemma~\ref{homogen}
we can assume that $x$ is homogeneous,
say of degree $k$. If $I=\langle u\rangle$,
then $k\leq 0$;
if $I=\langle d\rangle$, then $k\geq 0$; 
if $I=\langle H\rangle$,
then there is no restriction on $k$.
To avoid repetitions, we assume
$k\geq 0$ here (and thus
$I=\langle d\rangle$ if $\cp=0$); 
the other possibilities
are treated similarly.
Then we can write $x=u^kg(h,H)$, but
since $x\not\in I$, we can assume that
$x=u^kf(h)$ for some polynomial $f$.

Among all nonzero
elements of the form $u^kf(h)$, pick one where
the degree of $f$ is as small as possible.
Then
\[
xu-ux=u^{k+1}(f(h+\gamma)-f(h)).
\]
The polynomial $f(h+\gamma)-f(h)$ has smaller 
degree than $f(h)$, hence it must be the zero
polynomial by choice of $x$. Thus we have
$f(h+\gamma)=f(h)$, and this implies that
$f$ is the constant polynomial. Thus $\ann M$
contains $u^k$, where $k\geq 1$.
\end{proof}
We can now derive some consequences
about $M$.
\begin{lem}\label{bigideals2}
Suppose $M$ is simple and $HM=0$. If $\cp$
is not the zero polynomial and
$\ann M\supsetneq\langle H\rangle$, then $M$ is
finite-dimensional.  If $\cp$ is the zero
polynomial and $\ann M\supsetneq
\langle u\rangle$ or $\ann M\supsetneq 
\langle d\rangle$, then $M$ is one-dimensional.
\end{lem}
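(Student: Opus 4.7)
The plan is to split along the two cases in the hypothesis and in each case extract enough information about $\ann M$ to invoke the detection tools from Section~2.

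For the first statement ($\cp\neq 0$), Lemma~\ref{containspower} supplies a minimal $m\ge 1$ with either $u^m\in\ann M$ or $d^m\in\ann M$; by symmetry I will take the former, so $u^m\in\ann M$ and $u^{m-1}\notin\ann M$. The idea is to feed this straight into Corollary~\ref{findimcor}. Since $HM=0$, the operator identity $ud=-\cp(h)$ holds on $M$, and combining with $uf(h)=f(h-\gamma)u$ yields, on $M$,
\[
0 = u^m d = u^{m-1}(ud) = -\cp(h-(m-1)\gamma)\,u^{m-1}.
\]
Provided $\cp$ is nonconstant, $\cp(h-(m-1)\gamma)$ is a nonconstant polynomial in $h$ with $f(h)u^{m-1}M=0$, and Corollary~\ref{findimcor} (applied with $u$ in place of $d$) forces $M$ to be finite-dimensional. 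It remains to exclude the possibility that $\cp$ is a nonzero constant $c$: in that subcase $ud$ acts on $M$ as the invertible scalar $-c$, and a short check using $du=sud+\phi(h)$ with $\phi=(s-1)c$ shows that $u$ is injective on $M$, so no power of $u$ annihilates $M$; an entirely symmetric argument works for $d$. This contradicts Lemma~\ref{containspower} under our hypothesis $\ann M\supsetneq\langle H\rangle$, so the constant-$\cp$ subcase cannot occur.

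For the second statement ($\cp=0$), we have $\phi=s\cp(x)-\cp(x+\gamma)=0$ and hence $du=sud$ in $L$, from which $ud^{k-1}=s^{-(k-1)}d^{k-1}u$ follows. Assume without loss of generality that $\ann M\supsetneq\langle u\rangle$ (the case of $\langle d\rangle$ is symmetric); then $uM=0$ and by Lemma~\ref{containspower} there is a minimal $k\ge 1$ with $d^k\in\ann M$. Pick $w\in M$ with $d^{k-1}w\ne 0$ and set $v=d^{k-1}w$. Then $dv=d^k w=0$ and $uv=s^{-(k-1)}d^{k-1}(uw)=0$, so both $u$ and $d$ annihilate $v$. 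Therefore $Lv$ collapses to $\C[h]v$, which must equal $M$ by simplicity; but a simple cyclic $\C[h]$-module is one-dimensional, so $M$ is one-dimensional.

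The main obstacle is the calculation in the first part: recognising that $u^m d$, reduced on $M$ using $HM=0$, simplifies to a polynomial-in-$h$ multiple of $u^{m-1}$, which is precisely the hypothesis demanded by Corollary~\ref{findimcor}. Once that identity is in hand, excluding the degenerate constant-$\cp$ subcase and carrying through the symmetric analogue with $d$ in place of $u$ are routine.
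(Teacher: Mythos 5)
Your proposal is correct and follows essentially the same route as the paper: invoke Lemma~\ref{containspower} to get a power of $u$ or $d$ in $\ann M$, rule out constant $\cp$ by showing the relevant operators act invertibly (the paper notes $u^kd^k$ acts as the nonzero scalar $(-C)^k$, which is the same observation), and then feed a nonconstant polynomial identity of the form $f(h)u^{m-1}M=0$ into Corollary~\ref{findimcor}. Your identity $u^md=-\cp(h-(m-1)\gamma)u^{m-1}$ on $M$ and your detour through a vector killed by both $u$ and $d$ in the $\cp=0$ case are harmless cosmetic variants of the paper's computations.
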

\begin{proof}
Suppose first that $\cp=0$ and
$\ann M\supsetneq \langle d\rangle$.
Then the previous lemma implies that 
$u^k\in\ann M$ for some $k\geq 1$;
this implies that $u\in\ann M$
(since either $uM=0$ or $uM=M$). Hence
$\ann M$ contains both $u$ and $d$, and
therefore all three operators $u$, $d$, and $h$
commute, so they all act as scalars. Thus
$M$ is one-dimensional. Similarly,
if $\cp=0$ and
$\ann M\supsetneq \langle u\rangle$,
then $M$ is one-dimensional. 

On the other hand, suppose now
that  $\cp\neq 0$. By the previous lemma
$\ann M$ contains $u^k$ or $d^k$;
we'll say $u^k$ for definiteness. 
We first show that
$\cp$ cannot be a constant. If $\cp$
were the constant $C$, then $ud=H-C$
would act as $-C$ on $M$, and
thus $u^kd^k$ would act as the
nonzero constant $(-C)^k$ on $M$,
contradicting $u^kM=0$. Thus
$\cp$ must be a nonconstant polynomial.
Let $u^m$ be the smallest
power of $u$ that lies in $\ann M$. In this
case,
\[
du^m-s^mu^md=[s^{m}\cp(h-(m-1)\gamma)-\cp(h+\gamma)]u^{m-1}.
\]
The polynomial $s^{m}\cp(h-(m-1)\gamma)-\cp(h+\gamma)$
is nonzero, since $s^{m}\cp(h-(m-1)\gamma)=\cp(h+\gamma)$
implies that $\cp$ has infinitely many roots. (If $\alpha$
is a root of $\cp$, then so are $\alpha+m\gamma$,
$\alpha+2m\gamma$, and so on.)
Thus we can use Corollary~\ref{findimcor} to
conclude that $M$ is finite-dimensional.  
\end{proof}

\subsection{The case $HM=M$}
When $HM=M$, the situation is more complicated, 
especially when $o(s)<\infty$. If $o(s)=n$, 
then
$H^nu=uH^n$ and $dH^n=H^nd$,
so $H^n$ is a central element in $L$.
Thus $H^n$ acts a  scalar $c^n$
for some constant $c$. (We
write the constant as an $n$th power
for balance with $H^n$.) Thus any
primitive ideal must contain 
$H^n-c^n$; since $HM=M$,
we have $c\neq 0$.  There is
a special case, however, that we need to look at
more closely. It is complicated enough
that we bestow upon it a separate lemma.
\begin{lem}\label{specialcase}
Suppose $o(s)=n$, $\cp$
is a constant polynomial $C\neq 0$,
and $H^n$ acts as the scalar $C^n$ on
a simple module $M$.
Then $\ann M$ must contain
either $u^n$ or $d^n$.
\end{lem}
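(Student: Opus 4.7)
The plan is to show first that $u^n d^n$ acts as zero on $M$, then that $\ker(u^n)$ and $\ker(d^n)$ are each $L$-submodules of $M$; simplicity of $M$ will then force one of these kernels to equal all of $M$, yielding the conclusion. As setup, the conformal identity $s\cp(h) - \cp(h+\gamma) = \phi(h)$ with $\cp \equiv C$ collapses to $\phi(h) = (s-1)C$, a constant. Hence $ud = H - C$ and $du = sud + \phi(h) = s(H-C) + (s-1)C = sH - C$.

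For the first step, I exploit conformality in the form $uH = s^{-1}Hu$. A short induction on $k$ gives
\[
u^k d^k = u^{k-1}(ud)d^{k-1} = u^{k-1}(H-C)d^{k-1} = (s^{-(k-1)}H - C)\,u^{k-1}d^{k-1},
\]
which telescopes to $u^n d^n = \prod_{j=0}^{n-1}(s^{-j}H - C)$. Since $s$ is a primitive $n$th root of unity, $\{s^{-j} : 0 \le j \le n-1\}$ runs over all $n$th roots of unity, so the product factors as $s^{n(n-1)/2}(H^n - C^n)$. Because $H^n$ acts as $C^n$ on $M$, this yields $u^n d^n M = 0$.

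For the second step, I derive by parallel induction (using that $\phi$ is the constant $(s-1)C$) the commutation formulas
\[
du^k = s^k u^k d + C(s^k-1)\,u^{k-1}, \qquad ud^k = s^{-k}d^k u + C(s^{-k}-1)\,d^{k-1}.
\]
At $k = n$ both correction terms vanish since $s^n = 1$, so $du^n = u^n d$ and $ud^n = d^n u$. Together with $hu^n = u^n(h + n\gamma)$ and $hd^n = d^n(h - n\gamma)$, these relations show directly that $\ker(u^n)$ and $\ker(d^n)$ are each stable under $u$, $d$, and $h$: for $v \in \ker(u^n)$ we compute $u^n(uv) = u(u^n v) = 0$, $u^n(dv) = d(u^n v) = 0$, and $u^n(hv) = hu^n v - n\gamma u^n v = 0$, and symmetrically for $\ker(d^n)$. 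Thus both kernels are $L$-submodules of $M$.

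Finally, suppose $d^n \notin \ann M$ and pick $v \in M$ with $d^n v \neq 0$. The first step gives $u^n(d^n v) = 0$, so $d^n v$ is a nonzero vector of $\ker(u^n)$; simplicity of $M$ combined with the second step forces $\ker(u^n) = M$, i.e., $u^n \in \ann M$. I expect the main obstacle to be the identity $u^n d^n = s^{n(n-1)/2}(H^n - C^n)$, which requires setting up the $H$-$u$ commutation cleanly and recognizing the factorization of $X^n - C^n$ at $n$th roots of unity; once that is in hand, the remaining steps are routine bookkeeping with the defining relations.
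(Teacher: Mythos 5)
Your proposal is correct and follows essentially the same route as the paper: both derive $du^k=s^ku^kd+C(s^k-1)u^{k-1}$ (so $d^n$ and $u^n$ commute with $L$ up to the $h$-shift), both compute $u^nd^n=\prod_{j=0}^{n-1}(s^{-j}H-C)$ as a nonzero scalar multiple of $H^n-C^n$, and both conclude from simplicity. The only cosmetic difference is that you argue with the submodules $\ker(u^n)$ and $\ker(d^n)$ where the paper uses the images $u^nM$ and $d^nM$.
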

\begin{proof}
We have $\phi(h)=(s-1)C$, so 
$du=sud+(s-1)C$. 
By induction we get that 
$d^iu=s^iud^i+(s^i-1)C$ for $i\geq 1$.
In particular, $d^nu=ud^n$. 
Thus $ud^nM=d^nuM$;
also, $hd^nM=d^n(h-n\gamma)M$.
Therefore $d^nM$ is submodule of $M$,
hence either $d^nM=0$
or $d^nM=M$. Similarly, either
$u^nM=0$ or $u^nM=M$. 
(Note that this result holds for
any simple module $M$, not
just those where $H^n$ acts
as $C^n$.)

Recall that $ud=H-C$. By
induction we establish that 
$u^kd^k=\prod_{i=0}^{k-1} (s^{-i}H-C)
=(-1)^k\prod_{i=0}^{k-1} (C-s^{-i}H)$. Thus
$u^nd^n=(-1)^n\prod_{i=0}^{n-1}(C-s^{-i}H)
=(-1)^n (C^n-H^n)$.
Since $H^n$ acts as $C^n$
on $M$, we conclude that $u^nd^n$ acts
as the zero operator on $M$.
Therefore either $u^nM=0$ or $d^nM=0$,
i.e., $\ann M$ contains either
$u^n$ or $d^n$.
\end{proof}
We remark here that in the situation
of the lemma, any ideal that
contains $u^n$ or $d^n$ must also
contain $H^n-C^n$, since $H^n-C^n=
(-1)^{n+1} u^nd^n$.

Summarizing so far, we have the following information
when $HM=M$: if $o(s)=n$, then
$\ann M$ must contain $H^n-c^n$ for some
$c\neq 0$; furthermore, if $\cp=C\neq 0$
and $c^n=C^n$, then $\ann M$ actually
contains $u^n$ or $d^n$. In other cases
(i.e., if $s$ is not a root of unity), we have
no information about $\ann M$. 

We now look at primitive ideals that
are possibly larger than the minimal ones.
We start with a result about polynomials
of two variables: it 
is the two-variable
analogue of the fact that if $f(x+\gamma)
=f(x)$, then $f$ is a constant polynomial.
(We used this fact in the proof of Lemma~\ref{bigideals2}.)
\begin{lem}\label{polyproperty}
Suppose $g(x,y)=\sum_{i=0}^m g_i(x)y^i$ with
$m<o(s)$, and  $g(x+\gamma,sy)= s^n g(x,y)$
for some $0\leq n\leq m$. 
Then $g(x,y)=cy^n$ for some constant
$c\in\C$.
\end{lem}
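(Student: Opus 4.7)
The plan is to compare coefficients of powers of $y$ on both sides of the functional equation and reduce to a one-variable statement for each coefficient polynomial $g_i(x)$.

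First I would expand the left-hand side as
\[
g(x+\gamma, sy) = \sum_{i=0}^m g_i(x+\gamma)\,s^i\,y^i,
\]
and the right-hand side as $s^n g(x,y) = \sum_{i=0}^m s^n g_i(x)\,y^i$. Since these are equal as polynomials in $y$ over $\C[x]$, matching the coefficient of $y^i$ gives
\[
g_i(x+\gamma) = s^{n-i}\,g_i(x) \quad \text{for each } 0\le i\le m.
\]

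Next I would analyze this equation separately for $i=n$ and $i\neq n$. For $i=n$, the relation becomes $g_n(x+\gamma) = g_n(x)$; since $\gamma\neq 0$, iteration forces $g_n$ to take the same value at infinitely many points, so $g_n$ is a constant $c\in\C$ (this is exactly the one-variable fact invoked in Lemma~\ref{bigideals2}). For $i\neq n$ with $0\le i\le m$, we have $0<|n-i|\le m<o(s)$, hence $s^{n-i}\neq 1$. Comparing leading coefficients in the relation $g_i(x+\gamma)=s^{n-i}g_i(x)$ shows that if $g_i$ had degree $d$ with leading coefficient $a_d$, then $a_d = s^{n-i}a_d$, forcing $a_d=0$, a contradiction unless $g_i\equiv 0$.

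Combining these two cases, only the $i=n$ term survives, and it is a constant, so $g(x,y)=c\,y^n$ as required. The only subtle point is using the hypothesis $m<o(s)$ at exactly the right place: it guarantees $s^{n-i}\neq 1$ for every relevant $i\neq n$, which is what rules out nonconstant (or even nonzero) $g_i$ in those slots. No further obstacle is anticipated; the argument is essentially a coefficient-by-coefficient reduction to the one-variable lemma already used earlier in the section.
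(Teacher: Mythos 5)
Your proof is correct and takes essentially the same approach as the paper: expand in powers of $y$, equate coefficients to get $g_i(x+\gamma)=s^{n-i}g_i(x)$, and then dispose of each $g_i$ by a one-variable argument, using $m<o(s)$ to ensure $s^{n-i}\neq 1$ for $i\neq n$. The only cosmetic difference is that for $i\neq n$ the paper propagates a root of $g_i$ along $a, a+\gamma, a+2\gamma,\dots$ (and treats the rootless, i.e.\ constant, case separately), whereas you compare leading coefficients; both are valid.
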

\begin{proof}
We have
$\sum_{i=0}^m g_i(x+\gamma) s^i y^i=
\sum_{i=0}^m s^n g_i(x) y^i$.
Comparing the coefficients of $y^i$, we see
that $s^ig_i(x+\gamma) = s^n g_i(x)$. Now
if $g_i$ has a root $a$, then $a+\gamma$,
$a+2\gamma$, \dots, are also roots. Thus $g_i$
would have infinitely many roots, and hence must
be the zero polynomial. If $g_i$ does not have any
roots, then it is a constant $c$, but then we would
have $s^i c = s^n c$, so $c=0$ unless $i=n$.
We conclude that $g_i(x)=0$ if $i\neq n$, but
$g_n(x)=c$. Therefore $g(x,y)=cy^n$, as
required. 
\end{proof}
\begin{lem}\label{bigideals}
Let $M$ be a simple module with $HM=M$.
If $s$ is not a root of unity, and $\ann M\neq \{0\}$,
then $\ann M$ contains a power of $u$
or a power of $d$. If $o(s)=n$,
and $\ann M$ strictly contains $\langle
H^n-c^n\rangle$ for $c\neq 0$, then 
$\ann M$ also contains a power of $u$
or a power of $d$.

Additionally, suppose $\cp(h)=C\neq 0$
and $\ann M\supsetneq \langle d^n
\rangle$. Then $u^n\in\ann M$. Similarly,
if $\ann M\supsetneq \langle u^n
\rangle$, then $d^n\in\ann M$.
\end{lem}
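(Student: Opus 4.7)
The plan is to adapt the strategy of Lemmas~\ref{containspower} and~\ref{bigideals2} to the richer situation where $L_0=\C[h,H]$ has two variables, substituting the two-variable Lemma~\ref{polyproperty} for the elementary fact that a polynomial fixed by a shift must be constant. Take $x\in\ann M$ outside the appropriate ``small'' ideal; by Lemma~\ref{homogen} one may assume $x$ is homogeneous, and by the usual $u\leftrightarrow d$ symmetry I would treat only the case of positive degree $k$, writing $x=u^kg(h,H)$. The goal is to show, via a suitable minimality argument, that one may take $g=cH^m$. Once this is done, since $HM=M$ forces $\ker H$ (an $L$-submodule of $M$ because $Hu=suH$ and $dH=sHd$) to be zero, $H^m$ acts bijectively on $M$, and $u^kH^m\in\ann M$ yields $u^k\in\ann M$. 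The key identity throughout is
\[
xu-s^aux=u^{k+1}\bigl[g(h+\gamma,sH)-s^ag(h,H)\bigr],
\]
together with the analogue $xu^n-u^nx=u^{k+n}[g(h+n\gamma,H)-g(h,H)]$ when $s^n=1$.

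When $s$ is not a root of unity, I would order nonzero positive-degree elements of $\ann M$ lex by $(\deg_Hg,\deg_hg_m)$, where $g_m$ is the leading $H$-coefficient. Taking $a=m=\deg_Hg$, the leading $H^m$-coefficient of the bracket equals $s^m[g_m(h+\gamma)-g_m(h)]$, which has strictly smaller $h$-degree unless $g_m$ is constant; minimality therefore forces $g_m$ constant, whereupon the $H^m$-coefficient vanishes, the bracket has $H$-degree $<m$, and a second use of minimality forces it to be zero. This gives $g(h+\gamma,sH)=s^mg(h,H)$, and Lemma~\ref{polyproperty} (with $m<o(s)=\infty$) yields $g=cH^m$ after matching leading scalars. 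For $o(s)=n$, $H^n$ is central and $H^n-c^n\in\ann M$, so one may reduce to $\deg_Hg<n$. A two-step minimization then applies: the bracket with $u^n$ strips the $h$-dependence from each $g_i$ (making $g\in\C[H]$), and then the bracket with $u$, suitably scaled, forces $g(sH)=s^mg(H)$ with $m<n$, to which Lemma~\ref{polyproperty} again gives $g=cH^m$. Should every element of $\ann M$ outside the small ideal happen to lie in $L_0$, multiply such an $x_0$ by $u$: since the small ideal is generated by the central element $H^n-c^n$ and the automorphism $f(h,H)\mapsto f(h+\gamma,sH)$ fixes $H^n-c^n$, a divisibility check shows $x_0u$ remains outside the small ideal.

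For the additional statement, assume $\cp=C\neq 0$, $o(s)=n$, and $\ann M\supsetneq\langle d^n\rangle$. From the formula $du^j=s^ju^jd+(s^j-1)Cu^{j-1}$ one reads off $du^n=u^nd$, and together with $hu^n=u^n(h+n\gamma)$ this makes $u^nM$ an $L$-submodule of $M$. Either $u^nM=0$, giving $u^n\in\ann M$ directly, or $u^nM=M$, in which case $\ker u^n$ (also a submodule) must be zero and $u^n$ acts bijectively on $M$. To invoke the main part, produce a positive-degree element of $\ann M\setminus\langle d^n\rangle$ via the same ``multiply by $u$'' trick (now using $\langle d^n\rangle=Ld^n=d^nL$ and $u^{n+1}d^n=\pm u(H^n-C^n)$ in place of the corresponding facts for $\langle H^n-c^n\rangle$); this yields some $u^k\in\ann M$ with $k\geq 1$. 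Bijectivity of $u^n$ then forces $k\not\equiv 0\pmod n$ (else $u^kM=M\neq 0$), and applying $(u^n)^{-j}$ on $M$ for suitable $j$ extracts $u^{k\bmod n}\in\ann M$, whence $u^n\in\ann M$.

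I anticipate the main technical obstacle to be the bookkeeping in the additional statement: locating a positive-degree witness in $\ann M\setminus\langle d^n\rangle$ requires a concrete enough description of $\langle d^n\rangle$ to carry out the divisibility argument, and converting the output $u^k$ of the main part into $u^n$ specifically uses the submodule structure delicately. By contrast, the minimality arguments in the main part itself are routine once the correct complexity measure is in place.
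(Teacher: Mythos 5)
Your proposal is correct and follows essentially the same route as the paper: Lemma~\ref{homogen} to reduce to a homogeneous element $u^kg(h,H)$ (with $\deg_H g<n$ after reducing modulo the central $H^n-c^n$), a minimality argument on the commutator $xu-s^mux$ feeding into Lemma~\ref{polyproperty} to force $g=cH^m$, and for the additional claim the identity $u^\ell d^\ell=\prod_i(s^{-i}H-C)$ together with the submodule dichotomy for $u^nM$ to upgrade a power of $u$ to $u^n$. Your two-step minimization in the root-of-unity case and the explicit ``multiply by $u$'' normalization are only cosmetic variants of the paper's single lexicographic minimization and its direct treatment of degree-zero witnesses.
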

\begin{proof}
If $o(s)=\infty$, let $I=\{0\}$; if $o(s)=n$,
let $I=\langle H^n-c^n\rangle$, where 
$0\neq c\in\C$. Suppose that
$\ann M\supsetneq I$.
We proceed as in the proof of 
Lemma~\ref{bigideals2}. 
Choose an element $x\in\ann M$
that is not in $I$; by Lemma~\ref{homogen}
we can assume that $x$ is homogeneous,
say of degree $k$. For definiteness
we take $k\geq 0$; the case $k<0$
is similar. Thus $\ann M$ contains
an element of the form 
$x=u^kg(h,H)$. If $o(s)=n$,
i.e., $I=\langle H^n-c^n\rangle$,
we can assume that the highest
power of $H$ that appears in $g$
is at most $n-1$.  Among
all nonzero elements of this form,
choose one where the degree of $g$ is minimal.
Write $g(h,H)=\sum_{i=0}^m f_i(h)H^i$;
here $m<o(s)$.
Then $x u=u^kg(h,H)u=u^{k+1}g(h+\gamma,sH)$,
so
\[
x u-s^mux = u^{k+1}[g(h+\gamma,sH)-s^mg(h,H)]
=u^{k+1}\sum_{i=0}^m [s^if_i(h+\gamma)-s^mf_i(h)]H^i.
\]
The polynomial 
$
g'(h,H)=\sum_{i=0}^m [s^if_i(h+\gamma)-s^mf_i(h)]H^i
$
has smaller degree than $g(h,H)$ (if
$g(h,H)$ has degree $(\ell,m)$, then $g'(h,H)$
 has degree at most 
$(\ell-1,m)$), so by choice of $g(h,H)$,
we must have $g'(h,H)=0$. Hence
$g(h+\gamma,sH)=s^mg(h,H)$, and by
Lemma~\ref{polyproperty} we conclude
that $g(h,H)$ is a nonzero multiple
of $H^m$. 
Thus $\ann M$ contains $u^kH^m$. Since
$HM=M$, we conclude that $\ann M$
contains $u^k$. 

Now suppose $o(s)=n$, $\cp(h)=C\neq 0$, $H^n$
acts as the scalar $C^n$ on $M$, and 
$\ann M\supsetneq \langle d^n\rangle$. 
Note that $\langle d^n\rangle$
contains $d^{n-j}d^ju^j=
d^{n-j}g_j(H)$, where $g_j(H)
=\prod_{i=1}^{j} (s^i H-C)$
is a polynomial of degree $j$.

As before, we use Lemma~\ref{homogen}
to conclude that $\ann M$ contains
a homogenous element $x\not\in
\langle d^n\rangle$ of degree $k$. Clearly $k>-n$.
If $k\geq 0$, then we proceed as above,
concluding that $\ann M$ contains
a power of $u$. 
Suppose however that $k<0$. Set
$k=j-n$. Then we can
write $x=d^{n-j}f(h,H)$; since
$d^{n-j}g_j(H)\in I$, we can assume
that the degree of $H$ in $f(h,H)$
is at most $j-1$. Therefore the element
$g(h,H)=u^{n-j}x=u^{n-j}d^{n-j}f(h,H)$
is a nonzero element in $\ann M$;
the degree of $H$ in this element is
at most $n-1$. We can write
$g(h,H)=\sum_{i=0}^m f_i(h)H^i$
where $m<n$. We now proceed as
above, concluding that $\ann M$
contains a power of $u$. Since either
$u^nM=0$ or $u^nM=M$, we
conclude that $u^n\in\ann M$.

Similarly, if $\ann M\supsetneq 
\langle u^n\rangle$, then $\ann M$
also contains $d^n$.
\end{proof}
\begin{lem}\label{findimresult}
Let $M$ be a simple module with $HM=M$.

If $o(s)=\infty$ 
and $\ann M\neq \{0\}$, then $M$ is
finite-dimensional.

If $o(s)=n$, $\cp$ is not a constant polynomial,
and $\ann M\supsetneq \langle H^n-c^n\rangle$
($c\in\C^\times$), then $M$ is finite-dimensional.

If $o(s)=n$, $\cp(h)=C$ is constant,
and $\ann M\supsetneq \langle H^n-c^n\rangle$
($c\in\C^\times$), then $c^n=C^n$. (Thus
$\ann M$ must contain $u^n$ or $d^n$.)

If $o(s)=n$, $\cp(h)=C$ is constant,
and $\ann M\supsetneq \langle u^n\rangle$
or $\ann M\supsetneq \langle d^n\rangle$,
then $M$ is finite-dimensional. 
\end{lem}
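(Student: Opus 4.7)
The plan is to treat the four statements in order, each reducing via Lemma~\ref{bigideals} to the case where $\ann M$ contains a power of $u$ or of $d$, and then invoking either Corollary~\ref{findimcor} or the identity $u^n d^n=(-1)^n(C^n-H^n)$ from the proof of Lemma~\ref{specialcase}. The main computational tool is the telescoping formula
\[
du^m-s^m u^m d=g_m(h)\,u^{m-1},\qquad g_m(h)=s^m\cp(h-(m-1)\gamma)-\cp(h+\gamma),
\]
proved by induction from $du=sud+\phi(h)$ and the conformal relation $s\cp(x)-\cp(x+\gamma)=\phi(x)$ supplied by Lemma~1.2.

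For Part~1, I choose $m\ge1$ minimal with $u^m\in\ann M$ (the $d^m$ case being symmetric). Because $o(s)=\infty$, $s^m\ne1$, and comparing leading coefficients in the two summands shows $g_m$ has the same degree as $\cp$; so when $\deg\cp\ge1$ the polynomial $g_m$ is nonconstant and Corollary~\ref{findimcor} applies. The sub-cases $\cp$ a nonzero constant and $\cp=\phi=0$ are both vacuous under the standing $HM=M$: in the first, $g_m$ is a nonzero scalar and $u^{m-1}\in\ann M$ contradicts minimality; in the second, $uM$ is an $L$-submodule of $M$, which must be $0$ (giving $HM=udM=0$, contradicting $HM=M$) or $M$ (contradicting $u^m M=0$). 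Part~2 runs identically for $m<n$; for $m=n$ the leading terms cancel and $\deg g_n=\deg\cp-1$, still nonconstant provided $\deg\cp\ge2$. The case $\deg\cp=1$ with $m=n$ produces a nonzero constant $g_n$, once again forcing a minimality contradiction, hence vacuity. Part~3 is then short: Lemma~\ref{bigideals} puts $u^n$ or $d^n$ into $\ann M$, the identity in Lemma~\ref{specialcase} then places $H^n-C^n$ into $\ann M$, and comparing with the hypothesis that $H^n$ acts as $c^n$ yields $c^n=C^n$.

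Part~4 is the main obstacle, because with $\cp=C$ constant the polynomial $g_m$ is itself constant and Corollary~\ref{findimcor} is not directly applicable. The case $C=0$ is vacuous: $H=ud$ with $HM=M$ and $u^n M=0$ forces $u^{n-1}M=0$, and iterating collapses $M$. Assuming $C\ne0$, Lemma~\ref{bigideals} supplies both $u^n,d^n\in\ann M$, and Lemma~\ref{specialcase} gives $H^n=C^n$ on $M$, producing the eigenspace decomposition $M=\bigoplus_{k=0}^{n-1}M_k$ with $H|_{M_k}=s^k C$ and the scalar actions $ud|_{M_k}=(s^k-1)C$, $du|_{M_k}=(s^{k+1}-1)C$. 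For $k=1,\ldots,n-1$ both scalars are nonzero, so $u:M_{k-1}\to M_k$ and $d:M_k\to M_{k-1}$ are bijections; $u^n=d^n=0$ on $M$ then forces $u|_{M_{n-1}}=0$ and $d|_{M_0}=0$. The heart of the argument is to show that $M_0$ is a simple $\C[h]$-module: for any $\C[h]$-submodule $N\subseteq M_0$, the subspace $N'=\bigoplus_{k=0}^{n-1}u^k N$ is an $L$-submodule of $M$ (the nontrivial verification is $d\cdot u^k N=(s^k-1)C\,u^{k-1}N$, which rests on $d|_{M_0}=0$ so that $dN=0$); by simplicity of $M$ one has $N\in\{0,M_0\}$, so $M_0$ is one-dimensional and $\dim M=n$.
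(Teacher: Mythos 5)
Your argument is correct. Parts 1--3 follow essentially the paper's route: Lemma~\ref{bigideals} supplies a power of $u$ or $d$ in $\ann M$, and the identity $du^m-s^mu^md=g_m(h)u^{m-1}$ together with Corollary~\ref{findimcor} finishes the nonconstant cases. Your treatment of the constant-$\cp$ subcase of Part~1 differs in detail: the paper rules it out by exhibiting $v_0\in\ker u$ with $u^kd^kv_0=C^k\prod_{i=1}^k(s^{-i}-1)v_0\neq 0$, whereas you get the same contradiction from $g_m=(s^m-1)C$ and minimality of $m$; both work, and your Part~2 discussion is actually more explicit than the paper's about what happens when the leading terms of $g_m$ cancel (the paper delegates this to the ``in particular'' clause of Corollary~\ref{findimcor}). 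The genuine divergence is Part~4. The paper picks $v_0$ with $dv_0=0$, sets $v_i=u^iv_0$, notes that these lie in distinct $H$-eigenspaces, and uses simplicity to manufacture a nonconstant polynomial in $h$ annihilating $v_0$, after which Corollary~\ref{findimcor} applies. You instead diagonalize $H$ globally (from $H^n=C^n$ and the distinct roots $s^kC$ of $x^n-C^n$), show $u$ and $d$ shift the eigenspaces bijectively except at the ends where $u|_{M_{n-1}}=d|_{M_0}=0$, and prove $M_0$ is a simple $\C[h]$-module, hence one-dimensional. Your version costs a little more setup but yields the sharper conclusion $\dim M=n$ and avoids the ad hoc step of solving $xhv_0=v_0$. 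Two small elisions, neither fatal: in Part~3, Lemma~\ref{bigideals} only gives some power $u^k$ or $d^k$, and the upgrade to $u^n$ or $d^n$ uses the fact (from the proof of Lemma~\ref{specialcase}) that $u^nM$ and $d^nM$ are submodules when $\cp$ is constant; and in Part~2 the case split should be on $s^m\neq 1$ versus $s^m=1$ rather than $m<n$ versus $m=n$, since nothing bounds the minimal $m$ by $n$ a priori --- the argument is unchanged.
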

\begin{proof}
In all the cases listed, the preceding
lemma implies that 
$\ann M$ contains a power of $u$
or a power of $d$. For 
definiteness let's say that $u^k\in \ann M$.

Suppose now that $o(s)=\infty$ and $\ann M
\neq \{0\}$.  We first show that $\cp$ cannot
be a constant polynomial. 

If $\cp$ is the zero polynomial, then $H=ud$
and $u^kd^k=\prod_{i=0}^{k-1} s^{-i}H$ is in
$\ann M$, i.e., $H^k$ is in $\ann M$. This 
contradicts $HM=M$.

If $\cp$ is a nonzero constant $C$, we have
to work harder. Let $v_0$ be a nonzero element
in $\ker u$ (such an element exists since $u^kM=0$).
Let $v_i=d^iv_0$. We check easily that for $i\geq 1$,
$uv_i=(s^{-i}-1)Cv_{i-1}$. In particular, 
$u^kv_k=cv_0$, where $c=C^k\prod_{i=1}^k (s^{-i}-1)\neq 0$.
This contradicts $u^k\in\ann M$.

Thus $\cp$ is nonconstant. We now use 
Corrolary~\ref{findimcor}
to conclude that 
$M$ is finite-dimensional.

Similarly, if $o(s)=n$, $\cp$ is nonconstant
and $\ann M\supsetneq \langle H^n-c^n\rangle$,
we use Corrolary~\ref{findimcor}
to conclude that 
$M$ is finite-dimensional.

So it remains to consider the case where
$o(s)=n$ and 
$\cp(h)=C$ is a constant polynomial. 
Now  $u^kM=0$
implies that $u^n\in\ann M$, so $u^nd^n=\pm(H^n-C^n)$
is also in $\ann M$. Thus the scalar $c^n-C^n$
is in $\ann M$, hence $c^n=C^n$, as required.

Suppose now that $\cp(h)=C$,
$\ann M$ strictly contains $\langle u^n\rangle$
or $\langle d^n\rangle$. By the lemma above,
$\ann M$ contains both $u^n$ and $d^n$.
We now show that if $u^n, d^n\in \ann M$,
then $M$ is finite-dimensional. First,
choose a nonzero element $v_0$ such
that $dv_0=0$ (such an element
clearly exists). Define $v_i=u^iv_0$;
then $v_j=0$ for $j\geq n$ since
$u^n\in\ann M$. Note that
$Hv_0=(ud+C)v_0=Cv_0$. It follows
that $Hv_i=Hu^iv_0=s^iu^iHv_0=s^iCv_i$
for $0\leq i\leq n-1$. 
Since $h$ commutes with
$H$, we have $Hf_i(h)v_i=s^if_i(h)v_i$
for any polynomial $f_i(h)$. 
Thus each $f_i(h)v_i$
belong to a different eigenspace
of $H$, $0\leq i\leq n-1$.

Now $M$ is simple, so there exists
an element $x\in L$ such that $xhv_0=v_0$.
We can rewrite this equation as
$h\sum_{i=0}^{n-1} f_i(h)u^i v_0=v_0$
for some polynomials $f_i$. Thus
$h\sum_{i=0}^{n-1}f_i(h)v_i=v_0$.
For $i\neq 0$, the $H$-eigenvalue
of  $f_i(h)v_i$ is
different from the $H$-eigenvalue
of $v_0$, so we must have $f_i(h)v_i=0$.
Thus $hf_0(h)v_0=v_0$, or
$(hf_0(h)-1)v_0=0$. In other words,
we have found a nontrivial polynomial
in $h$ that annihilates $v_0$.
By Corollary~\ref{findimcor}, this implies
that $M$ is finite-dimensional.
\end{proof}
\subsection{Proving primitivity}
We now show that the ideals
$\{0\}$, $\langle H\rangle$, $\langle H^n-c^n\rangle$,
$\langle u\rangle$, $\langle d\rangle$,
$\langle u^n\rangle$, and $\langle d^n
\rangle$ are all primitive, under the
right circumstances.
\begin{lem}
The following are primitive ideals:
\begin{enumerate}
\item $\langle H\rangle$, if $\cp$ is not identically zero;
\item $\{0\}$, if $o(s)=\infty$;
\item $\langle H^n-c^n\rangle$ ($c\neq 0$), if $o(s)=n$
and $\cp$ is not a constant polynomial;
\item $\langle H^n-c^n\rangle$ ($c\neq 0$), if $o(s)=n$
and $\cp$ is a constant $C\neq c$;
\item $\langle u\rangle$ and $\langle d\rangle$,
if $\cp(h)=0$;
\item $\langle u^n\rangle$ and $\langle d^n\rangle$,
if $o(s)=n$ and $\cp$ is a nonzero constant $C$.
\end{enumerate}
\end{lem}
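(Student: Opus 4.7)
The plan is to treat each of the six items by exhibiting an explicit simple $L$-module whose annihilator is exactly the stated ideal. Containment of the stated ideal in $\ann M$ is a routine check from the module's action in each case, while the reverse containment follows from the preceding lemmas---especially Lemmas~\ref{bigideals2}, \ref{bigideals}, and~\ref{findimresult}, which tell us that any strict enlargement of the candidate annihilator forces $M$ to be finite-dimensional. Thus it suffices in each case to exhibit an infinite-dimensional simple module in which the stated ideal annihilates everything.

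For items (1)--(4) I would use a generic universal weight module $W(\lambda,\beta)$, choosing $\beta + \cp(\lambda)$ equal to the scalar by which $H$ should act on $v_0$: zero in case~(1), a nonzero $c$ in cases~(2)--(4), with $c^n$ equal to the prescribed value in (3) and (4). The relation $Hu = suH$ propagates this to $Hv_i = s^ic\,v_i$, which when $o(s)=n$ immediately gives $H^n - c^n \in \ann W$. For generic $\lambda$ the weights $\lambda_i = \lambda + i\gamma$ are all distinct and the $\beta_i = s^ic - \cp(\lambda_i)$ are all nonzero, so $W$ is simple by section~\ref{universal} and infinite-dimensional; the preceding lemmas then pin $\ann W$ down to the stated ideal.

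For case~(5), when $\cp = 0$ the quotient $L/\langle u\rangle$ is the Ore extension $\C[h][d;\sigma]$ with $\sigma$ the shift by $\gamma$. I would take $M = \C[h]$ with $h$ acting by multiplication and $d$ by $p(h)\mapsto p(h+\gamma)$. Any nonzero $h$-stable, shift-stable subspace is an ideal $p(h)\C[h]$ with $p(h+\gamma) = p(h)$, which forces $p$ constant, so $M$ is simple; faithfulness (giving $\ann_L M = \langle u\rangle$) follows from a Vandermonde argument on the system $\sum_i p_i(h)(h+i\gamma)^j = 0$. The $\langle d\rangle$ case is symmetric.

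Case~(6) is the main obstacle, since the natural finite-dimensional candidate $F_{\text{hw}}(\lambda)$ of dimension $n$ has annihilator $\langle u^n,d^n,J_\lambda\rangle$, which strictly contains $\langle u^n\rangle$. I would build an infinite-dimensional (non-weight) simple module $M = \bigoplus_{k=0}^{n-1} M_k$ with each $M_k$ a copy of $V = \C[x]$: let $u$ act as the identity map $M_k \to M_{k+1}$ for $k<n-1$ and as zero on $M_{n-1}$ (so $u^n = 0$), let $d$ act on $M_k$ as multiplication by the scalar $Cs^k - C$ for $k>0$ and on $M_0$ as the shift $T\colon p(x)\mapsto p(x+n\gamma)$, and let $h$ act on $M_k$ by multiplication by $x + k\gamma$. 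The defining relations are satisfied by a direct check, and since $H = ud + C$ acts as the scalar $Cs^k$ on $M_k$, the summands are the distinct $H$-eigenspaces. Any submodule therefore splits as $\bigoplus_k(N \cap M_k)$, and the $u$- and $d$-isomorphisms between successive $M_k$'s force the intersections to coincide as a subspace $N' \subseteq V$ that is stable under multiplication by $x$ and under $T$; the polynomial argument of case~(5) (with $\gamma$ replaced by $n\gamma$) gives $N' \in \{0, V\}$, so $M$ is simple. Since $d^n|_{M_0}$ equals a nonzero scalar times $T$, we have $d^n\not\in \ann M$, and Lemma~\ref{findimresult} yields $\ann M = \langle u^n\rangle$ exactly. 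The $\langle d^n\rangle$ case is symmetric.
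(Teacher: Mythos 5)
Your proposal is correct, and for items (1)--(4) it is essentially the paper's own argument: the same universal weight modules $W(\lambda,\beta)$, the same choice of $c=\beta+\cp(\lambda)$, and the same appeal to Lemmas~\ref{bigideals2}, \ref{bigideals} and~\ref{findimresult} to pin down the annihilator. One small caution there: your ``generic $\lambda$'' phrasing only does work when $\cp$ is nonconstant; when $\cp$ is a constant $C$ the values $\beta_i=s^ic-C$ do not depend on $\lambda$ at all, and the nonvanishing of all $\beta_i$ comes instead from the hypotheses of the relevant item ($c=0\neq C$ in (1), $o(s)=\infty$ in (2), $c^n\neq C^n$ in (4)). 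The paper spells this out; you should too, but it is a presentational gap, not a mathematical one.

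For items (5) and (6) your route is genuinely different from the paper's, and it works. The paper builds a single family of non-weight modules on a basis $\{v_i:i\in\Z\}$ with $uv_i=v_{i+1}$, $dv_i=(s^i-1)Cv_{i-1}$ and the non-diagonal action $hv_i=v_{i-n}+i\gamma v_i$, proving simplicity via the auxiliary operator $T=uh-1$; specializing $C=0,n=1$ gives (5) and $C\neq 0$, $o(s)=n$ gives (6). You instead use, for (5), the tautological module $\C[h]$ for the Ore extension $L/\langle u\rangle$ (simplicity from the classification of shift-stable ideals of $\C[h]$, exactness of the annihilator from a Vandermonde/interpolation argument or, more in the spirit of the paper, from Lemma~\ref{bigideals2}); and for (6), a sum $\bigoplus_{k=0}^{n-1}\C[x]$ on which $H$ acts by the distinct scalars $Cs^k$, so that submodules split and reduce to a single $x$-stable, shift-stable subspace of $\C[x]$. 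I checked your relations and the cyclic convention $d\colon M_0\to M_{n-1}$ (which is the only consistent reading of ``the shift $T$ on $M_0$'', and which you should state explicitly, since written literally a scalar ``on $M_k$'' would not lower the index); everything closes up, $u^n=0$, $d^n\neq 0$, and Lemma~\ref{findimresult} then forces $\ann M=\langle u^n\rangle$. What your approach buys is modules with a transparent structure (polynomial representations glued along an $H$-eigenspace decomposition) and a simplicity proof that is pure commutative algebra; what the paper's buys is a single uniform construction covering both (5) and (6) with one verification. Either is acceptable.
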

\begin{proof}
We make use of the universal weight module
$W(\lambda,\beta)$
from section~\ref{universal}. In the present
case ($r=1,\gamma\neq 0$), we have
$\lambda_i=\lambda+i\gamma$
and $\beta_i=s^i\beta+s^i\cp(\lambda)
-\cp(\lambda+i\gamma)$. Note that
the $\lambda_i$s are always all distinct, so 
if $\beta_i\neq 0$
for all $i\in\Z$, then 
$W(\lambda,\beta)$ is simple. 

We need to know how $H=ud+\cp(h)$
acts on $W(\lambda,\beta)$. A quick
calculation shows that 
for $i\in\Z$, $Hv_i=s^ic$, where
$c=\beta+\cp(\lambda)$. We then
have $\beta_i=s^ic-\cp(\lambda+i\gamma)$.
We usually set a value for $c$ first,
then find $\lambda$ and $\beta$
to suit our purpose.

Suppose first that $\cp$ is a nonconstant
polynomial. Then no matter what
the value of $c$ is, we can find
$\lambda$ and $\beta$ such 
that $\beta+\cp(\lambda)=c$
and $\beta_i\neq 0$. 
(The details:
For a given $i\in\Z$, the polynomial equation
$\cp(x)=s^ic$ has only finitely many
solutions, so there are only
countably many values of $\lambda$
where $\cp(\lambda+i\gamma)=s^ic$
for some $i\in\Z$. Thus we can
certainly choose a value of $\lambda$
so that $\cp(\lambda+i\gamma)\neq s^ic$
for \emph{any} $i\in\Z$. We can then
choose $\beta$ to satisfy $\beta+\cp(\lambda)
=c$.) Thus if $\cp$ is a nonconstant
polynomial, we can construct a simple
$W(\lambda,\beta)$ for any value of $c$.

When $c=0$, we get a simple module
$M$ with $HM=0$. Additionally, note that even if
$\cp$ is a constant $C\neq 0$, we can still
choose $\lambda$ and $\beta$ so that
$c=\beta+C=0$ and $\beta_i=-C\neq 0$
for all $i\in\Z$. Thus we get a simple
module $M$ with $HM=0$ whenever
$\cp$ is nonzero. Its annihilator is
$\langle H\rangle$ by Lemma~\ref{bigideals2}.
This proves (1). 

When $c\neq 0$ and $o(s)=n$,
we get a simple module $M$ with $HM=M$;
its annihilator must be $\langle H^n-c^n\rangle$
by Lemma~\ref{bigideals}. This proves (3). 

When $c\neq 0$ and $o(s)=\infty$, again
we get a simple module with $HM=M$.
Note that even if $\cp(h)=C$, a constant
(which could be zero),
we can still choose $\lambda$ and $\beta$
so that $c=\beta+C\neq 0$ and 
$\beta_i=s^ic-C\neq 0$
for all $i\in\Z$. Thus we have a simple
module $M$ with $HM=M$ no matter
what $\cp$ is. The annihilator of 
this module is $\{0\}$, by Lemma~\ref{bigideals}.
This proves (2). 

Suppose now that $c\neq 0$, $o(s)=n$,
and $\cp$
is a constant $C$. We can
still choose $\lambda$ and $\beta$
such that
$\beta+C=c$ and
$\beta_i=s^ic-C\neq 0$
for any $i\in\Z$, provided $C$ is not
one of the values $s^ic$, i.e., provided
$C^n\neq c^n$. We get a simple
module $M$ such that $HM=M$.
The annihilators must be
$\langle H^n-c^n\rangle$  by 
Lemma~\ref{bigideals}. This proves (4). 

It remains to prove (5) and (6). 
We now need to introduce modules that
are not weight modules.
Here is one possibility. Suppose
$\cp(h)=C$ where $C\in\C$.
Pick a positive integer $n$. 
Let $M$ be a module with basis
$\{v_i: i\in\Z\}$ where
\[
uv_i=v_{i+1}, \quad hv_i=v_{i-n}+i\gamma v_i,\quad
dv_i=(s^i-1)Cv_{i-1}. 
\]
We can check that
this is indeed a module:
\begin{gather*}
huv_i=v_{i+1-n}+(i+1)\gamma v_{i+1}=
v_{i-n+1}+i\gamma v_{i+1} + \gamma v_{i+1}=
u(h+\gamma)v_i;\\
dhv_i=(s^{i-n}-1)Cv_{i-n-1}+i\gamma (s^i-1)Cv_{i-1}
=(h+\gamma)dv_i;\\
[sud+(s-1)C]v_i=[s^{i+1}-s]Cv_i + [s-1]Cv_i = [s^{i+1}-1]C=
duv_i.
\end{gather*}
We now verify that this module is simple.
Define the operator
$T=uh-1$. Its action on $v_i$ is $Tv_i = i\gamma v_{i+n}$.
Thus $v_i$ is ``almost'' an eigenvector of $T$ with
eigenvalue $i\gamma$. In this case ``almost''
is good enough. 

Suppose $M_1$ is a nonzero submodule of $M$. 
Let $x=\sum a_iv_i$ be a nonzero element in
$M_1$ with minimal length. 
Then $Tx = \sum i\gamma a_i v_{i+n}$ and
for any $m$ with $a_m\neq 0$, we have
$Tx-m\gamma ux = \sum (i-m)\gamma
 a_i v_{i+n}$, which has shorter length than $x$, and
 so must be zero. Thus $x=a_mv_m$, i.e.,
 $M_1$ contains a pure basis vector $v_m$.
 
 Now $M$ can be generated by just one $v_m$.
 To get $v_{m+1}$ we simply apply $u$,
 and to get $v_{m-1}$ we apply $h$ and subtract
 $m\gamma v_m$, getting $v_{m-n}$, then
apply $u$ repeatedly until we get $v_{m-1}$.
 Thus $M_1=M$. So $M$ is indeed
 a simple module.

Now if $C=0$ and $n=1$, then $d$ annihilates
$M$. By Lemma~\ref{bigideals2}, $\ann M$
cannot be bigger than $\langle d\rangle$.
Thus $\langle d\rangle$ is indeed a primitive
ideal. Similarly, $\langle u\rangle$ is a
primitive ideal. This proves (5).

If $C\neq 0$ and $o(s)=n$, then 
$d^n$ acts as the zero scalar.
Thus $\ann M$ contains $\langle d^n
\rangle$. It cannot be any larger by
Lemma~\ref{bigideals}. Thus $\langle d^n
\rangle$ is a primitive ideal.
Similarly, $\langle u^n\rangle$ is a primitive
ideal. This proves (6). 
\end{proof}

\subsection{A list of primitive ideals}
Putting all these together, we get a complete
list of the primitive ideals of $L(\phi,1,s,\gamma)$
($\gamma\neq 0$),
summarized in the following table. Note
that we only include primitive ideals that
are not annihilators of finite-dimensional
modules.
\bigskip
\begin{center}
\begin{tabular}{|c|c|c|}\hline
$o(s)$ & $\cp$ & primitive ideals \\ \hline
$\infty$ & $0$ & $\{0\}, \langle u\rangle, \langle d\rangle$\\
$\infty$ & nonzero & $\{0\}, \langle H\rangle$\\
$n$ & $0$ & $\langle H\rangle, \langle u\rangle, \langle d\rangle,
			\langle H^n-c^n\rangle$ ($c\neq 0$)\\
$n$ & $C\neq 0$ & $\langle H\rangle, \langle u^n\rangle, \langle d^n\rangle,
			\langle H^n-c^n\rangle$ ($c\neq 0, c^n\neq C^n$)\\
$n$ & nonconstant & $\langle H\rangle, \langle H^n-c^n\rangle$ ($c\neq 0$)\\ \hline
\end{tabular}
\end{center}
\bigskip
The preceding process of determining primitive ideals
of $L(\phi,1,s,\gamma)$ is fairly typical of our method
in general. Using a central element or otherwise,
we find an element that must be present in
all primitive ideals. Using results similar to
Lemmas~\ref{homogen} and~\ref{containspower}
we hope to show that any primitive ideal
larger than these minimal ideals must contain
a power of $u$ or a power of $d$; usually
this is enough to conclude that these large
primitive ideals must be annihilators of
finite-dimensional simple modules. Sometimes
we will have to consider special cases that
could very well involve modules that are
not weight modules. 

\section{When $L$ is conformal and $\gamma=0$}

In this section we assume that  $\gamma=0$
and $L$ is conformal.
Recall that this means there
exists a polynomial $\cp$ such that $s\cp(x)-\cp(rx)=\phi(x)$.
We will use $\cp$ throughout instead of $\phi$.

It is very useful, especially when roots of unity
are involved, to have a commutation formula between
$u$ and $d^k$, $k\geq 1$; see Corollary~\ref{findimcor}.
With the base
case $du-sud=s\cp(h)-\cp(rh)$, we establish
by induction that
\[
d^ku-s^kud^k = [s^k\cp(h)-\cp(r^kh)]d^{k-1}.
\]
There is of course a corresponding formula
involving $d$ and $u^k$.

As before we define $H$ to be $ud+\cp(h)$; then 
$Hu=suH$ and $dH=sHd$.
If $M$ is a simple $L$-module, then these relations
imply that either $HM=M$
or $HM=0$; similarly, either $hM=M$ or $hM=0$. 
There are thus four cases to consider: when
$hM=HM=0$, when $hM=M$ and $HM=0$,
when $hM=0$ and $HM=M$, and finally,
when $hM=HM=M$.

\subsection{Homogeneous elements
in ideals}
Before we treat these cases individually,
we state the following result. It is
a somewhat more
complicated analogue of Lemma~\ref{homogen}.
\begin{lem}\label{homogen2}
Suppose $o(r)=\infty$
and $x$ is an element
in an ideal $I$ of $L$. Let $x=\sum_{i\in\Z} x_i$ be
its homogeneous decomposition (i.e., $x_i\in L_i$).
Then $x_ih^{\ell(x)-1}\in I$ for all $i\in\Z$, where
$\ell(x)$ is the length of $x$. 
\end{lem}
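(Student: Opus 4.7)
The plan is to imitate Lemma~\ref{homogen}, but the key difference is that with $\gamma=0$ and $r\neq 1$, commuting $h$ past a homogeneous element is \emph{multiplicative} rather than additive. Specifically, from $hu=ruh$ and $dh=rhd$ one checks (by straightforward induction on the number of $u$s and $d$s) that for any $x_i\in L_i$ we have $hx_i=r^i x_i h$. This means that instead of producing a pure scalar commutator, each application of $h$ moves $h$ to the right and scales the homogeneous summand by $r^i$.

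I would induct on $\ell(x)$. For the base case $\ell(x)=1$ the statement reads $x_i h^0=x_i\in I$, which is immediate because $x=x_i$. For the inductive step, assume the claim for every element of length $n-1$, and let $x=\sum_{i\in\Z} x_i\in I$ have length $n$. Pick an index $m$ with $x_m\neq 0$ and compute
\[
hx-r^m xh=\sum_{i\in\Z}(r^i-r^m)x_i h.
\]
The $i=m$ term vanishes, so the right-hand side is homogeneous of length $n-1$; it lies in $I$ because $I$ is two-sided. Apply the inductive hypothesis to this element: its degree-$i$ component is $(r^i-r^m)x_i h$, so we obtain
\[
(r^i-r^m)x_i h\cdot h^{n-2}=(r^i-r^m)x_i h^{n-1}\in I\qquad(i\neq m).
\]
Since $o(r)=\infty$, the scalar $r^i-r^m$ is nonzero for every $i\neq m$, hence $x_i h^{n-1}\in I$ for all $i\neq m$. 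Finally, $xh^{n-1}\in I$, and subtracting the previously obtained terms yields $x_m h^{n-1}\in I$ as well.

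The only conceptual point beyond the analogue in Lemma~\ref{homogen} is that each commutator with $h$ introduces a fresh factor of $h$ on the right. Over $n-1$ inductive steps this factor accumulates to $h^{n-1}$, which is exactly the extra factor appearing in the statement; one cannot avoid it because the commutator $hx-r^m xh$ differs from the analogue $xh+m\gamma x-hx$ of Lemma~\ref{homogen} by precisely this extra $h$. The hypothesis $o(r)=\infty$ guarantees that the scaling factors $r^i-r^m$ are invertible, so nothing like a root-of-unity obstruction appears; that is the main (and essentially only) use of the hypothesis.
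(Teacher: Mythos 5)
Your proof is correct and follows essentially the same route as the paper's: the same induction on length, the same commutator $hx-r^m xh=\sum_i(r^i-r^m)x_ih$, and the same use of $o(r)=\infty$ to cancel the nonzero scalars before subtracting to recover the remaining component. The only cosmetic slip is calling the commutator ``homogeneous of length $n-1$'' when you mean it has length $n-1$; the argument itself matches the paper.
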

\begin{proof}
As before, we use induction on $\ell(x)$. The claim is
certainly true if $\ell(x)=1$. Assume that $\ell(x)>1$.
Then $hx=\sum hx_i=\sum r^ix_ih$.
Pick $k\in\Z$ such that $x_k\neq 0$. Then
\[
hx-r^kxh=\sum_{i\in \Z}(r^i-r^k)x_ih.
\]
Since $r^i\neq r^k$ unless $i=k$, we have
$\ell(hx-s^k xh)=\ell(x)-1$. 
By the induction hypothesis, $(r^i-r^k)x_ihh^{\ell(x)-2}\in I$;
hence $x_ih^{\ell(x)-1}\in$ for all $i\neq k$. 
Thus
$y=\sum_{i\neq k} x_ih^{\ell(x)-1}\in I$, so 
$x_kh^{\ell(x)-1}=xh^{\ell(x)-1}-y\in I$ also. 
\end{proof}
\begin{rem}\label{homogen2rem}
The proof only uses the commutation relation
between $h$ and elements of degree $i$.
Since $H$ has similar commutation relations,
the result is also true when $h$ is replaced
by $H$ and $r$ by $s$. 
\end{rem}
We now consider each of the four cases in turn.

\subsection{When $hM=HM=0$}
We dispose of this case immediately.
Both $h$ and $ud=H-\cp(h)$
act as scalars on $M$, so by Lemma~\ref{onedim},
$M$ must be one-dimensional. 

\subsection{When $hM=M$ and $HM=0$}
Suppose first that $o(r)=n$. Then 
$h^n$ commutes with $u$ and $d$,
and hence $h^n$ must act as a scalar
on $M$. This implies that $h$ has
an eigenvector; thus $hv=\lambda v$
for some nonzero $v\in M$ and
$\lambda\in\C$. Note that $Hv=0$,
so $(ud)v=-\cp(\lambda)v$, i.e.,
$v$ is also an eigenvector of $ud$.
Thus $v$ is a weight vector. 
Now recall that
\begin{align*}
d^ku&=s^kud^k + [s^k\cp(h)-\cp(r^kh)]d^{k-1}\\
&=s^k[H-\cp(h)]d^{k-1}+[s^k\cp(h)-\cp(r^kh)]d^{k-1}\\
&=[s^kH-\cp(r^kh)]d^{k-1}
\end{align*}
for $k\geq 1$. In particular, since $H$
acts as zero on $M$, we have $d^nu=
-\cp(h)d^{n-1}$ as operators on $M$.
Compare this with $ud=H-\cp(h)$,
i.e., $ud^n=Hd^{n-1}-\cp(h)d^{n-1}$,
and we conclude that as operators on
$M$, $d^n$ commutes with $u$. Clearly
$d^n$ also commutes with $h$, and 
hence $d^n$ acts as a scalar on $M$.
(Similarly, $u^n$ acts as a scalar on $M$.)
By Lemma~\ref{findim}, $M$ is finite-dimensional.

Thus we can concentrate
on the case where $r$ is not a root of unity.
Clearly $\ann M$ contains
$\langle H\rangle$, but when $\cp(h)=0$,
we can say even more. 
As in section~\ref{HM=0}, in this
case $\ann M$ must actually
contain $u$ or $d$. 
\begin{lem}\label{conf:containspower}
Suppose $r$ is not a root of unity, 
$HM=0$, $hM=M$, and $\ann M\supsetneq 
\langle H\rangle$. Then $\ann M$ contains
a power of $u$ or a power of $d$. 

When in addition $\cp(h)=0$ and $\ann M
\supsetneq \langle d\rangle$, then $\ann M$
contains $u$. Similarly, if $\ann M
\supsetneq \langle u\rangle$, then $\ann M$
contains $d$. 
\end{lem}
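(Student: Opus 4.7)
The plan is to mirror the proof of Lemma~\ref{containspower}, adapting it to the $r \neq 1$, $\gamma = 0$ regime with two new ingredients. First, $\gamma = 0$ makes $\ker h$ an $L$-submodule of $M$ (since $hu = ruh$ and $hd = r^{-1}dh$ force $h(uv) = 0$ and $h(dv) = 0$ whenever $hv = 0$); combined with $hM = M$ and simplicity, this forces $h$ to act invertibly on $M$. Applying Lemma~\ref{homogen2} to $\ann M$ will then yield $x_i h^{\ell(x)-1} \in \ann M$ for every homogeneous component $x_i$ of any $x \in \ann M$, and $h$-invertibility upgrades this to $x_i \in \ann M$. Hence we may assume any element chosen from $\ann M \setminus I$ is homogeneous.

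Let $I$ denote $\langle H \rangle$ in the main statement (or $\langle d \rangle$, $\langle u \rangle$ in the supplementary statement; note $\langle H \rangle \subseteq \langle d \rangle \cap \langle u \rangle$ when $\cp = 0$, since then $H = ud$). Pick $x \in \ann M \setminus I$ homogeneous of degree $k$; for $I = \langle d \rangle$ this forces $k \geq 0$, and for $I = \langle u \rangle$ it forces $k \leq 0$. Using $ud \equiv -\cp(h) \pmod{\langle H \rangle}$ or $ud \equiv 0$ modulo $\langle d \rangle$ (or $\langle u \rangle$) when $\cp = 0$, we can write $x \equiv u^k f(h) \pmod{I}$ for $k \geq 0$ (and symmetrically $x \equiv f(h) d^{-k} \pmod{I}$ for $k \leq 0$), with $f \neq 0$. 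The second new ingredient is a weighted commutator: setting $n = \deg f$,
\[
xu - r^n ux \equiv u^{k+1}\bigl(f(rh) - r^n f(h)\bigr) \pmod{I},
\]
and since $r$ is not a root of unity, $f(rh) - r^n f(h) = \sum_i a_i(r^i - r^n) h^i$ has degree strictly less than $n$.

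The argument then concludes by induction on $\deg f$. When $n = 0$ and $k \geq 1$, $f$ is a nonzero constant giving $u^k \in \ann M$ directly; the subcase $n = k = 0$ does not arise since a nonzero scalar cannot annihilate $M$. When $n \geq 1$, either $f(rh) - r^n f(h) \neq 0$ (and the induction proceeds on the new element of lower polynomial degree) or $f(rh) = r^n f(h)$, which forces $f(h) = a_n h^n$ by comparing coefficients and using $r^i \neq r^n$ for $i \neq n$; then $u^k h^n \in \ann M$, so $u^k \in \ann M$ via $h$-invertibility. The $k \leq 0$ case is symmetric and produces a power of $d$. For the refinement, when $\cp = 0$ and $I = \langle d \rangle$, the above yields $u^K \in \ann M$ for some $K \geq 1$; since $du = sud$ makes $uM$ an $L$-submodule, simplicity forces $uM \in \{0, M\}$, and $u^K M = 0$ with $M \neq 0$ rules out $uM = M$, giving $u \in \ann M$. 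The $I = \langle u \rangle$ case is symmetric. The principal technical hurdle is spotting the right weighted commutator: plain $[x,u]$ preserves $\deg f$ here (unlike the $\gamma \neq 0$ case, where the shift $f(h+\gamma) - f(h)$ automatically drops the degree), so the multiplier $r^n$ must be tailored to the current leading degree — hence the induction — with the boundary case $f(rh) = r^n f(h)$ handled separately by invoking $h$-invertibility.
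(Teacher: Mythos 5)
Your proposal is correct and follows essentially the same route as the paper: reduce to a homogeneous element via Lemma~\ref{homogen2} together with $hM=M$, write it as $u^kf(h)$ modulo the relevant ideal, use the weighted commutator $xu-r^{\deg f}ux$ to force $f(rh)=r^{\deg f}f(h)$ and hence $f(h)=ch^{\deg f}$, then strip off the power of $h$ using $hM=M$; the refinement via $uM\in\{0,M\}$ when $\cp=0$ is also the paper's argument. Your induction on $\deg f$ is just a repackaging of the paper's minimal-degree choice, and the remark that $\ker h$ is a submodule is harmless but unnecessary (surjectivity of $h$ on $M$ is all that is used).
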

\begin{proof}
Denote by $J$ the ideal $\langle H\rangle$;
in the special case where $\cp(h)=0$, let
$J$ denote the ideal $\langle d\rangle$. 
Pick an element $x$ in $\ann M$ that is
not in $J$. 
By Lemma~\ref{homogen2}, 
$\ann M$ contains an element of the form
$x_ih^k$ where $x_i\not\in J$,
$x_i\in L_i$. Since $hM=M$, we conclude
that $x_i\in\ann M$ also. In the special
case where $\cp(h)=0$, we clearly
have $i\geq 0$. In the other cases, to be definite
we also assume that $i\geq 0$; the case
where $i\leq 0$ is similar. Thus $\ann M$
contains an element of the form $u^if(h)$
for some nonzero polynomial $f$. Among all
such elements, choose one where the degree
of $f$ is as small as possible, say $m$. Then
\[
u^if(h)u - r^m u^{i+1}f(h)=u^{i+1}[f(rh)-r^mf(h)]
\]
is also in $\ann M$. But the polynomial
$f(rh)-r^mf(h)$ has smaller degree than $m$,
hence we must have $f(rh)=r^mf(h)$. If
$f(h)=\sum_{j=0}^m a_ih^i$, then
this implies that $\sum_{j=0}^m (r^j-r^m)a_jh^j=0$.
Since $r$ is not a root of unity, we have
$a_j=0$ for $j\neq m$.  Thus $f(h)$ is 
a multiple of $h^m$. Since $hM=M$, we conclude that
$\ann M$ contains $u^i$. 

In the case where $\cp(h)=0$, recall that
either $uM=M$ or $uM=0$; since $u^i\in\ann M$,
we must have $uM=0$, i.e., $u\in \ann M$. 

The other cases of the lemma are treated
similarly.
\end{proof}
The lemma has immediate consequences
for large primitive ideals.
\begin{lem}\label{confbigideals}
Suppose $o(r)=\infty$, $M$ is a simple module, $HM=0$, 
and $hM=M$. If $\ann M\supsetneq \langle H\rangle$
and $\cp$ is not zero, then $M$ is finite-dimensional.
If $\cp$ is zero and $\ann M\supsetneq
\langle u\rangle$ or $\ann M\supsetneq
\langle d\rangle$, then $M$ is one-dimensional. 
\end{lem}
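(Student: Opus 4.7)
The plan is to extract a pure power of $u$ or of $d$ from $\ann M$ via Lemma~\ref{conf:containspower}, and then either feed this into Corollary~\ref{findimcor} (for the first assertion) or combine it with $\cp=0$ to force both $u$ and $d$ to annihilate $M$ (for the second).

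For the first assertion, assume $\cp\neq 0$ and $\ann M\supsetneq\langle H\rangle$. Lemma~\ref{conf:containspower} yields either $u^k\in\ann M$ or $d^k\in\ann M$; by symmetry I treat the former, choosing $k\geq 1$ minimal, and set $M'=u^{k-1}M$, which is nonzero. Since $uM'=u^kM=0$, we have $udM'=0$, and together with $HM'\subseteq HM=0$ this gives $\cp(h)M'=(H-ud)M'=0$. If $\cp$ is a nonconstant polynomial, the hypotheses of Corollary~\ref{findimcor} are met (with the roles of $u$ and $d$ swapped, $m=k$, and $f=\cp$), so $M$ is finite-dimensional. The remaining sub-case, $\cp$ a nonzero constant $C$, is ruled out by the hypothesis $\ann M\supsetneq\langle H\rangle$: then $ud$ acts on $M$ as the scalar $-C$, and a quick induction gives $u^kd^k$ acting as $(-C)^k\neq 0$, contradicting $u^kM=0$. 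Hence only the nonconstant sub-case actually occurs, and the conclusion follows.

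For the second assertion, assume $\cp=0$ and (say) $\ann M\supsetneq\langle u\rangle$; the other case is symmetric. The second half of Lemma~\ref{conf:containspower} then forces $d\in\ann M$, and since $u\in\ann M$ as well, the quotient $L/\ann M$ is a homomorphic image of $\C[h]$, hence commutative. Any simple module of countable dimension over a commutative $\C$-algebra is one-dimensional by Dixmier's Schur lemma (Lemma~1.3), so $M$ is one-dimensional.

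There is no substantial obstacle here; the only point that requires attention is verifying that the ``$\cp$ is a nonzero constant'' branch in the first assertion is vacuous under the standing hypotheses, as this is what lets us apply Corollary~\ref{findimcor} cleanly to the remaining nonconstant branch.
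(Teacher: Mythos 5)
Your overall strategy is the same as the paper's: invoke Lemma~\ref{conf:containspower} to get a power of $u$ or of $d$ into $\ann M$, then feed a nonconstant polynomial in $h$ annihilating $u^{k-1}M$ (or $d^{k-1}M$) into Corollary~\ref{findimcor}, and in the $\cp=0$ case use commutativity plus Schur. The second assertion and your separate elimination of the nonzero-constant sub-case via $u^kd^k$ acting as $(-C)^k$ are both fine.

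However, there is a faulty step in the first assertion. You claim that $uM'=u^kM=0$ implies $udM'=0$, where $M'=u^{k-1}M$. This does not follow: $udM'=u(dM')$, and $dM'=du^{k-1}M$ is not contained in $\ker u$. In fact, since $Hu^{k-1}=s^{k-1}u^{k-1}H$ and $HM=0$, one computes $(ud)M'=(H-\cp(h))M'=-\cp(h)M'$, so the assertion ``$udM'=0$'' is exactly equivalent to the conclusion ``$\cp(h)M'=0$'' you are trying to derive from it; the argument is circular. The two branches are genuinely asymmetric here: in the $d^k$ branch, $v\in d^{k-1}M$ satisfies $dv=0$, so $udv=u(dv)=0$ and $\cp(h)v=Hv-udv=0$ comes for free (this is the branch the paper writes out). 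In the $u^k$ branch you must instead use the relation $du=sud+\phi(h)$: for $v\in M'$ with $uv=0$ one gets $0=duv=s(ud)v+\phi(h)v=-s\cp(h)v+s\cp(h)v-\cp(rh)v$, hence $\cp(rh)v=0$. Since $r\neq 0$, $\cp(rh)$ is nonconstant whenever $\cp$ is, so Corollary~\ref{findimcor} still applies and your conclusion stands --- but the justification as written is wrong and needs this replacement computation.
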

\begin{proof}
By the previous lemma, $\ann M$ contains
a power of $u$ or a power of $d$; assume it
is a power of $d$ for definiteness. Then
there is a nonzero element $v\in M$ such
that $dv=0$. Since $H=ud+\cp(h)$ and
$HM=0$, it is clear that $\cp(h)v=0$ also.
By Corollary~\ref{findimcor}, $M$ is finite-dimensional.

If $\cp$ is zero, then the previous lemma
implies that $\ann M$ contains both 
$u$ and $d$. Thus as operators on $M$,
the elements $u$, $d$, and $h$ all commute,
so $M$ must be one-dimensional.
\end{proof}

\subsection{When $hM=0$ 
and $HM=M$}
This case is similar to
the previous one, with $H$ and $s$
interchanged with $h$ and $r$. The
conclusions, however,
are different, 
so we will go through the details.

As before, we start by looking at the
case when $o(s)=n$. The operator
$\cp(h)$ acts as the scalar $\cp(0)$
on $M$, so in this case we have
 $d^ku=s^kud^k +(s^k-1)\cp(0)d^{k-1}$.
Thus
$d^nu=ud^n$, i.e., $d^n$ commutes with
$u$ (and with $h$) as an operator on $M$. 
Thus $d^n$ acts as a scalar on $M$.
Similarly, $u^n$ acts as a scalar on $M$. 

Note that $H^n$ also commutes with
$u$, $d$, and $h$, and hence acts as
a scalar on $M$. Thus $H$ has an eigenvector
on $M$. Since $h$ acts as a scalar on $M$
(namely, as zero), we can use Lemma~\ref{findim}
to conclude that $M$ is finite-dimensional. 

We can thus concentrate on the case
where $s$ is not a root of unity. Clearly
$\ann M$ contains $\langle h\rangle$.
It turns out that $\ann M$ cannot be
larger than $\langle h\rangle$ in this case. 
\begin{lem}\label{notlarger}
Suppose $s$ is not a root of unity, and $M$
is a simple module with $HM=M$,
$hM=0$. Then $\ann M=\langle h\rangle$.
\end{lem}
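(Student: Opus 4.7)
The inclusion $\langle h\rangle \subseteq \ann M$ is immediate, so the plan is to assume that there exists $x \in \ann M \setminus \langle h\rangle$ and derive a contradiction. First I would appeal to Remark~\ref{homogen2rem} (the $H$-and-$s$ analogue of Lemma~\ref{homogen2}, valid since $o(s) = \infty$): for each homogeneous component $x_i$ of $x$ the element $x_i H^{\ell(x)-1}$ lies in $\ann M$, and because $HM = M$ implies $H^j M = M$ for every $j \geq 0$, it follows that $x_i \in \ann M$. At least one such $x_i$ must still lie outside $\langle h\rangle$, so I may assume $x$ is homogeneous of some degree $k$. I focus on $k \geq 0$ (the $k \leq 0$ case is parallel, with $d$ in place of $u$) and write $x = u^k g(h, H)$. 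Separating the constant-in-$h$ part, $g(h,H) = \tilde g(H) + h\, g'(h,H)$, and using $u^k h = r^{-k} h u^k$ together with $\langle h\rangle \subseteq \ann M$, the difference $x - u^k \tilde g(H) = r^{-k} h u^k g'(h,H)$ lies in $\langle h\rangle$. Hence $u^k \tilde g(H) \in \ann M$, and $\tilde g$ is a nonzero polynomial because $x \notin \langle h\rangle$.

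The heart of the argument is a commutator-minimality step. Among all elements of the form $u^k \tilde g(H) \in \ann M \setminus \langle h\rangle$ (allowing $k \geq 0$ and $\tilde g \neq 0$ to vary), I choose one whose $\deg \tilde g$ is as small as possible, and set $m := \deg \tilde g$. The key identity
\[
xu - s^m ux = u^{k+1}\bigl[\tilde g(sH) - s^m \tilde g(H)\bigr]
\]
produces another element of $\ann M$. Writing $\tilde g(H) = \sum_j c_j H^j$, the bracketed polynomial equals $\sum_j c_j(s^j - s^m)H^j$: the degree-$m$ coefficient vanishes, while each lower coefficient $c_j s^j(1 - s^{m-j})$ is nonzero precisely when $c_j \neq 0$ (since $o(s) = \infty$ gives $s^{m-j} \neq 1$ for $j < m$). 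If $\tilde g$ had any nonzero coefficient below degree $m$, this would yield an element of $\ann M \setminus \langle h\rangle$ with strictly smaller $\deg$, contradicting minimality. Hence $\tilde g(H) = c_m H^m$ is a monomial, and from $c_m u^k H^m \in \ann M$ together with $H^m M = M$ we conclude $u^k \in \ann M$. The case $k = 0$ would force $M = 0$, so in fact $u^k \in \ann M$ for some $k \geq 1$.

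Finally, let $m \geq 1$ be minimal with $u^m \in \ann M$. I would invoke the identity
\[
du^m - s^m u^m d = u^{m-1}\bigl[s^m \cp(h) - \cp(r^m h)\bigr],
\]
the $u$-analogue of the formula stated at the start of Section~4 and established by the same induction. Because $hM = 0$ the bracket acts on $M$ as the scalar $(s^m - 1)\cp(0)$, while the left-hand side acts as zero ($du^m$ and $u^m d$ both kill $M$ since $u^m M = 0$). Thus $(s^m - 1)\cp(0) \cdot u^{m-1}M = 0$. If $\cp(0) \neq 0$ then $(s^m - 1)\cp(0) \neq 0$ (using $o(s) = \infty$), forcing $u^{m-1}M = 0$ and contradicting minimality. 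If $\cp(0) = 0$, then $H = ud$ and $HM = M$ give $udM = M$, so
\[
u^{m-1}M = u^{m-1}(udM) = u^m dM \subseteq u^m M = 0,
\]
again contradicting minimality. The symmetric case $k \leq 0$ proceeds identically with $d$ in place of $u$, invoking the formula $d^m u - s^m u d^m = [s^m\cp(h) - \cp(r^m h)] d^{m-1}$ recorded in the text. I expect the main subtlety to be the commutator step in paragraph two: the scalar $s^m$ must be chosen precisely to kill the leading coefficient of $\tilde g$, and the ``monomial'' edge case (where the commutator vanishes identically) must then be handled directly by invoking $HM = M$.
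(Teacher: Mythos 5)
Your proposal is correct and follows essentially the same route as the paper: reduce to a homogeneous element via the $H$-version of Lemma~\ref{homogen2}, run the minimality/commutator argument (the paper simply cites ``the same calculation as in Lemma~\ref{conf:containspower} with $H,s$ in place of $h,r$'') to place a power of $u$ or $d$ in $\ann M$, and then contradict this using $d^mu-s^mud^m=[s^m\cp(h)-\cp(r^mh)]d^{m-1}$ together with $hM=0$. Your treatment of the $\cp(0)=0$ subcase (via $udM=HM=M$) is a cosmetic variant of the paper's (which notes $u^md^m$ is a nonzero multiple of $H^m$), but the substance is identical.
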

\begin{proof}
Assume that $\ann M\supsetneq \langle
h\rangle$. We will derive a contradiction.
Using the same calculation as in 
Lemma~\ref{conf:containspower}, with
$H$ and $s$ exchanging roles with
$h$ and $r$, we conclude that
$\ann M$ contains a power of $u$
or a power of $d$. For definiteness
we assume that $\ann M$ contains
$d^m$ but not $d^{m-1}$,
where $m\geq 1$. Then $\ann M$
also contains
\[
d^mu-s^mud^m = (s^m-1)\cp(0)d^{m-1}.
\]
If $\cp(0)\neq 0$ we reach our contradiction.
If $\cp(0)=0$, then $ud=H$ as operators
on $M$. Thus $u^md^m=s^{-m}H^m$,
but this is also a contradiction since the
$u^md^m$ acts as zero while $H^m$ 
cannot act as zero since $HM=M$. 
\end{proof}
\subsection{When $hM=M$ and $HM=M$}
As before, we first consider the case where
both $r$ and $s$ are roots of unity. In this case
there exists a positive integer $n$ such that 
$r^n=s^n=1$. Then $h^n$ and $H^n$ are
both central elements, so they both act
as scalars on $M$. Since $h$ and $H$
commute, they have a common eigenvector.
Therefore $h$ and $ud=H-\cp(h)$ also
has a common eigenvector.

Recall that $d^ku-s^kud^k = [s^k\cp(h)-\cp(r^kh)]d^{k-1}$,
which implies that $d^nu=d^nu$.
Clearly $d^nh=r^nhd^n=hd^n$,
thus $d^n$ is a central element.
Hence $d^n$ also acts as a scalar
on $M$. Similarly, $u^n$ also acts
as a scalar on $M$. We can apply
Lemma~\ref{findim}
and conclude that $M$ is actually
finite-dimensional. 

We thus can concentrate on the case
where at least one of $r$ and $s$
is not a root of unity. In this case we
can use Lemma~\ref{homogen2}
and the remark following it, either
through $H$ and $s$, or through
$h$ and $r$.

Since now both $h$ and $H$ act
nontrivially on $M$, it is possible that
they interact in somewhat complicated
ways. To help keep track of this
interaction we make use of
the set  $\Se(r,s)=\{(i,j)\in\Z\times \Z:
r^i=s^j\}$. 
We will often just write $\Se$ for
$\Se(r,s)$, since $r$ and $s$
are usually fixed.

Note that $\Se$ is an additive
subgroup of $\Z\times \Z$. Since
we are assuming that not both
of $r$ and $s$ are roots of unity,
it is straightforward to verify that
$\Se$ is in fact generated by
one element: there exits
$(n,m)\in\Se$ such that 
$\Se=\{(kn,km): k\in\Z\}$. We can
thus divide the sets $\Se$ into
two types: those that contain pairs
$(i,j)$ where $i$ and $j$ have
the same sign or zero,
and those that contain pairs $(i,j)$
where $i$ and $j$ have
opposite signs. For ease
of reference, we adopt the
following notational convention:
when $\Se$ is of the first type,
we write $\Se=\langle (n,m)\rangle$
to indicate that $\Se$ is
generated by $(n,m)$ where
$n,m\geq 0$; when $\Se$
is of the second type, we write
$\Se=\langle (n,-m)\rangle$
to indicate that $\Se$ is
generated by $(n,-m)$
where $n,m>0$. Thus the
type of $\Se$ is indicated
by the presence or absence
of the minus sign on $m$.

Now let $M$ be a simple
module. Every element of $\Se$
gives rise to an element of $\ann M$,
as follows. 

Suppose first that $\Se =
\langle (n,m)\rangle$ and $(i,j)\in
\Se$. The pair $(-i,-j)$ gives
rise to the same element of 
$\ann M$, so we can assume that $i,j\geq 0$.
Since $hM=M$ (and
$\ker h=0$), the operator $h$ has
an inverse $h^{-1}$ (although
the element $h$ does not have an
inverse in $L$). We have $h^{-1}u
=r^{-1}uh^{-1}$ and $dh^{-1}=r^{-1}h^{-1}d$.
Thus the operator $h^{-i}H^{j}$ commutes with 
everything in $L$, and hence acts as a scalar
$c_{i,j}$ on $M$. The scalar $c_{i,j}$ must be nonzero
because $hM=M$ and $HM=M$.
This implies that $H^j$ acts the same way
on $M$ as does $c_{i,j}h^i$. 
Therefore $\ann M$ must contain
$H^j-c_{i,j}h^i$.

We define $I_\Se$ as the ideal
generated by all $H^j-c_{i,j}h^i$
as $(i,j)$ ranges over the elements
of $\Se$; then $I_\Se\subseteq \ann M$.
Since $\Se$ is generated by the
single element $(n,m)$, it is straightforward
to verify that
if $(i,j)=(kn,km)$, then 
$H^j-(c_{n,m})^k h^j\in \langle
H^m-c_{n,m}h^n\rangle \subseteq I_\Se$.
Since $H^j-c_{i,j}h^i\in I_\Se$,
we conclude that 
 $c_{i,j}=(c_{n,m})^k$ and
 that $I_\Se= \langle
H^m-c_{n,m}h^n\rangle$.

Suppose now that $\Se=\langle
(n,-m)\rangle$ and $(i,-j)\in \Se$.
As before, the pair $(-i,j)$ gives
rise to the same element of $\ann M$,
so we can assume that  $i,j>0$.
This time we conclude that 
$h^iH^j - c_{i,-j}\in \ann M$ for
some nonzero scalar $c_{i,-j}$.
A similar calculation as before
shows that $c_{i,-j}=(c_{n.-m})^k$
if $(i,-j)=(kn,-km)$. If we define
$I_\Se$ as the ideal generated 
by all $h^iH^j - c_{i,-j}$ as
$(i,-j)$ ranges over $\Se$, 
then $I_\Se=\langle
h^nH^m-c_{n,-m}\rangle$. 

Here are some examples. Suppose $r$ and $s$ are
algebraically independent. Then $\Se=\{(0,0)\}$
and $I_\Se=\{0\}$. Suppose $o(s)=m<\infty$
and $o(r)=\infty$. Then $\Se=\{(0,km):k\in\Z\}$
and $I_\Se=\langle H^m- c_{0,m}\rangle$. Suppose
$rs=1$ and $o(r)=o(s)=\infty$. Then 
$\Se=\{(k,-k):k\in\Z\}$ and 
$I_\Se=\langle hH-c_{1,-1}\rangle$. 

Inspired by these observations, we define,
for a given $\Se$ and \emph{any} $c\in\C^\times$,
the ideal $I_c= \langle H^m-ch^n\rangle$ if
$\Se=\langle (n,m)\rangle$, and
$I_c=\langle h^nH^m-c\rangle$ if
$\Se=\langle (n,-m)\rangle$.
Note that if $(i,j)\in\Se$ with $i,j\geq 0$, then 
$(i,j)=(kn,km)$ for some $k\geq 0$,
and the element $H^j-c^k h^i$ lies in $I_c$.
Similarly, if $(i,-j)\in\Se$ with
$i,j>0$, then $(i,-j)=(kn,-km)$ for some $k\geq 0$,
and the element $h^iH^j-c^k$ lies
in $I_c$.

Any primitive ideal of $L$ contains 
$I_c$ for some $c\in\C^\times$.
We need to determine which values of $c$
actually make $I_c$ a primitive ideal.
The easy case, of course, is when
$\Se=\{(0,0)\}$. Then $I_c=\langle
1-c\rangle$ and so only $I_1=\{0\}$
can be a primitive ideal. 

When $\Se\neq \{(0,0)\}$,
the situation is almost the complete
opposite. It turns out
that almost all nonzero values of $c$
make $I_c$ a 
primitive ideal, but as usual there
are special cases where $\ann M$ has to contain
more than just $I_c$.
\begin{lem}\label{specialcase2}
Let $M$ be a simple module with $hM=HM=M$.
Suppose $(jm,m)$ is the generator of $\Se$ for
some $j\geq 0$ and $m>0$ (thus $s^m=r^{jm}$).
Suppose further that  $\cp(h)=Ch^j$ with
$C\neq 0$. If $I_{C^m}\subseteq \ann M$,
i.e., if $H^m-C^mh^{jm}\in \ann M$,
then $\ann M$ also contains $u^m$ or $d^m$.
\end{lem}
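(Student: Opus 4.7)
The plan is to mimic the strategy used in Lemma \ref{specialcase} from Section 3. I will first show that both $d^mM$ and $u^mM$ are $L$-submodules (hence each is $0$ or $M$), and then produce an identity of the form $u^md^m = (\text{unit})\cdot(H^m - C^m h^{jm})$, so that the hypothesis $H^m-C^mh^{jm}\in\ann M$ forces $u^md^m\in\ann M$. Combining these two facts with simplicity of $M$ finishes the argument exactly as in the earlier lemma.

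For the submodule step, I will specialize the commutation formula
\[
d^k u - s^k ud^k = [s^k\cp(h)-\cp(r^kh)]d^{k-1}
\]
stated at the beginning of the section to $\cp(h)=Ch^j$. This gives $d^ku-s^kud^k = C(s^k-r^{jk})h^jd^{k-1}$, and the hypothesis $s^m=r^{jm}$ (equivalent to $(jm,m)\in\Se$) makes the right-hand side vanish at $k=m$. Hence $d^mu=s^mud^m$ on $L$, which together with $d^mh=r^mhd^m$ shows that $d^mM$ is stable under $u$, $d$, and $h$; by simplicity $d^mM\in\{0,M\}$, and symmetrically $u^mM\in\{0,M\}$.

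The technical heart of the proof is the identity
\[
u^kd^k \;=\; \prod_{i=0}^{k-1}\bigl(s^{-i}H - Cr^{-ij}h^j\bigr),
\]
which I will establish by induction using $ud=H-Ch^j$, $uH=s^{-1}Hu$, and $uh^j=r^{-j}h^ju$. Factoring $s^{-i}$ out of each factor and setting $\omega := s/r^j$, the product becomes $s^{-k(k-1)/2}\prod_{i=0}^{k-1}(H-C\omega^i h^j)$. The main (and really only) subtle point is that $\omega$ is a \emph{primitive} $m$th root of unity: this is exactly the statement that $(jm,m)$ is the generator of $\Se$, since any smaller $k$ with $\omega^k=1$ would produce $(jk,k)\in\Se$. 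With primitivity in hand, $\prod_{i=0}^{m-1}(H-C\omega^i h^j) = H^m-(Ch^j)^m = H^m-C^m h^{jm}$, and hence
\[
u^md^m \;=\; s^{-m(m-1)/2}\,\bigl(H^m - C^m h^{jm}\bigr).
\]

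Since the right-hand side lies in $\ann M$ by hypothesis, $u^md^mM=0$. If $d^mM=0$ we are finished. Otherwise $d^mM=M$, and then $u^mM = u^m(d^mM) = (u^md^m)M = 0$, so $u^m\in\ann M$. In either case $\ann M$ contains $u^m$ or $d^m$, as required. The only step requiring any real care is the verification that $\omega=s/r^j$ has exact order $m$; everything else is routine manipulation of the defining relations.
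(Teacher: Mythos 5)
Your proof is correct and follows essentially the same route as the paper's: the paper likewise uses the commutation formula to show that $d^mM$ and $u^mM$ are each $0$ or $M$, and establishes by induction that $d^mu^m=\prod_{i=1}^m s^i[H-(r^js^{-1})^iCh^j]$ is a nonzero multiple of $H^m-C^mh^{jm}$, hence annihilates $M$ (you work with $u^md^m$ instead, which makes no difference). Your explicit check that $sr^{-j}$ is a \emph{primitive} $m$th root of unity --- needed for the product to collapse to $H^m-C^mh^{jm}$ --- is a detail the paper leaves implicit, and you are right to flag it as the one genuinely non-routine point.
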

\begin{proof}
We note that $s^{-1}r^{j}$ is an $m$th root of unity.
Since $du=sH-Cr^jh^j$, we get by induction that
\[
d^ku^k=\prod_{i=1}^k (s^iH-r^{ij}Ch^j)
=\prod_{i=1}^k s^i[H-(r^js^{-1})^iCh^j].
\]
In particular, for $k=m$, we get that $d^mu^m$ is
a nonzero multiple of $H^m-C^mh^{jm}$.
Thus $d^mu^m$ acts as the zero operator
on $M$. 

Recall that
\[
d^mu-s^mud^m=[s^m\cp(h)-\cp(r^mh)]d^{m-1}.
\]
When $\cp(h)=Ch^j$ and $s^m=r^{jm}$, 
we get 
$d^mu=s^mud^m$. Thus  
$d^mM$ is a submodule of $M$, either $0$
or $M$. Similarly, $u^mM=0$ or $u^mM=M$.
Since $d^mu^m$ acts as the zero operator
on $M$, it cannot be the case 
that both $u^mM=M$ and $d^mM=M$. 
Therefore $\ann M$ contains
either $u^m$ or $d^m$. 
\end{proof}
We now look at what happens when a
primitive ideal is larger than $I_c$, or
in the case of the lemma above,
larger than $\langle u^m\rangle$
or $\langle d^m\rangle$. 

To do so, 
we need the following lemma about two-variable
polynomials, analogous to 
Lemma~\ref{polyproperty}. We first recall the
definition of distinctive polynomials
from~\cite{Praton1}. A set $\mathcal{T}\subseteq
\Z\times\Z$
is called distinctive if $r^is^j\neq r^{i'}s^{j'}$
for all distinct pairs $(i,j), (i',j')\in\mathcal{T}$. 
A polynomial is called \emph{distinctive}
if its powers are in a distinctive set, i.e.,
$g(x,y)=\sum_{(i,j)\in\mathcal{T}} a_{i,j}x^iy^j$
is distinctive if $\mathcal{T}$ is a distinctive
set. (We count the zero polynomial as
distinctive.)

(Distinctive sets are not exotic. Note that
$r^is^j=r^{i'}s^{j'}$ iff $r^{i-i'}=s^{-(j-j')}$, i.e.,
iff $(i-i',-(j-j'))\in\Se$. Thus $(i,j)$ and
$(i',j')$ are in a distinctive set iff 
$(i,-j)$ and $(i',-j')$ are in different
$\Se$-cosets. Therefore $\mathcal{T}$
is a distinctive set iff the elements
$(i,-j)$, where $(i,j)\in\mathcal{T}$,
are different $\Se$-coset representatives.)
\begin{lem}\label{distinctpolyproperty}
Suppose $g(x,y)=\sum_{(i,j)\in\mathcal{T}}
a_{i,j} x^iy^j$ is a distinctive polynomial
and $g(rx,sy)=r^as^b g(x,y)$
where $(a,b)\in\mathcal{T}$. Then $g(x,y)=
cx^ay^b$ for some $c\in\C$.
\end{lem}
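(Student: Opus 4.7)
The plan is to prove this by a straightforward comparison of coefficients, exploiting the definition of distinctiveness.

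First I would expand both sides of the hypothesis $g(rx,sy)=r^as^b g(x,y)$ in the monomial basis. The left side becomes
\[
g(rx,sy)=\sum_{(i,j)\in\mathcal{T}} a_{i,j}\, r^i s^j\, x^iy^j,
\]
while the right side is $\sum_{(i,j)\in\mathcal{T}} r^a s^b a_{i,j}\, x^iy^j$. Since the monomials $x^iy^j$ are linearly independent, matching coefficients gives $a_{i,j}(r^is^j - r^as^b)=0$ for every $(i,j)\in\mathcal{T}$.

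Next I would invoke the distinctiveness of $\mathcal{T}$. Suppose $a_{i,j}\neq 0$ for some $(i,j)\in\mathcal{T}$. Then the displayed equality forces $r^is^j=r^as^b$. But both $(i,j)$ and $(a,b)$ lie in $\mathcal{T}$, and by definition of a distinctive set, $r^is^j=r^{i'}s^{j'}$ for $(i,j),(i',j')\in\mathcal{T}$ implies $(i,j)=(i',j')$. Hence $(i,j)=(a,b)$. Consequently $a_{i,j}=0$ whenever $(i,j)\neq(a,b)$, and so $g(x,y)=a_{a,b}\,x^ay^b$, proving the claim with $c=a_{a,b}$.

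There is essentially no obstacle here; the only point worth noting is that the hypothesis $(a,b)\in\mathcal{T}$ is used implicitly to ensure that $x^ay^b$ is actually one of the monomials allowed in $g$, so that the conclusion is a genuine restriction on $g$ rather than a vacuous statement. This lemma is really just the two-variable analogue of Lemma~\ref{polyproperty} adapted to the distinctive setting, and all the work was done in packaging the definition of ``distinctive'' correctly.
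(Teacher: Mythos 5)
Your proof is correct and follows exactly the same route as the paper's: expand $g(rx,sy)-r^as^bg(x,y)$ coefficientwise and use distinctiveness of $\mathcal{T}$ to force $a_{i,j}=0$ for $(i,j)\neq(a,b)$. No issues.
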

\begin{proof}
We have
\[
\sum_{(i,j)\in\mathcal{T}} a_{i,j}(r^is^j-r^as^b)x^iy^j=0.
\]
Since $\mathcal{T}$ is distinctive, we have
$r^is^j\neq r^as^b$ unless $(i,j)=(a,b)$.
The conclusion follows.
\end{proof}
Distinctive polynomials are useful because
any element of $L_0$ is congruent
modulo $I_c$ (where $c\in\C^\times$)
to a distinctive polynomial
in $h$ and $H$. To see
this, suppose $g(h,H)=\sum_{(i,j)\in \mathcal{R}}
a_{i,j}h^iH^j$ is an arbitrary element of $L_0$,
where $\mathcal{R}$ is not necessarily
a distinctive set. We will show that we can
reduce the set $\mathcal{R}$ until we 
get a distinctive set, without
changing the congruence class of $g(h,H)$
mod $I_c$.

So suppose $(i,j), (i',j')\in\mathcal{R}$
with $r^is^j=r^{i'}s^{j'}$. Then 
$r^{i-i'}=s^{j'-j}$, so $(i-i',j'-j)\in\Se$.
Clearly it does no harm to assume that 
$i-i'\geq 0$. If $j'-j\geq 0$, then 
$H^{j'-j}-c^k h^{i-i'}\in I_c$ for some
$k\geq 0$, 
so $H^{j'}h^{i'}\equiv c^k h^{i}H^j$ mod $I_c$.
Thus we can replace the term $h^{i'}H^{j'}$
by $c^kh^iH^j$ without changing the 
congruence class of  $g(h,H)$ mod $I_c$.
 On the other hand,
if $j'-j<0$, then $h^{i-i'}H^{j-j'}-c^k\in I_c$ for
some $k\geq 0$. Thus
$h^{i}H^j\equiv c^k h^{i'}H^{j'}$ mod $I_c$,
and we can still replace the term $h^{i'}H^{j'}$
by $c^k h^iH^j$ without changing the 
congruence class of $g(h,H)$. Clearly we can continue this
process until we obtain a distinctive polynomial.

We use the lemma about distinctive
polynomials to prove the following analogue
of Lemma~\ref{containspower}.
\begin{lem}\label{containspower2}
Let $M$ be a simple module with $hM=HM=M$.
If $\ann M\supsetneq I_c$, then $\ann M$ contains
a power of $u$ or a power of $d$.

In the special case of Lemma~\ref{specialcase2},
suppose $\ann M\supsetneq \langle u^m\rangle$.
Then $\ann M$ contains $d^m$. Similarly,
if $\ann M\supsetneq \langle d^m\rangle$,
then $\ann M$ contains $u^m$. 
\end{lem}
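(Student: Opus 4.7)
The plan is to mimic Lemmas~\ref{containspower} and~\ref{conf:containspower}, now using Lemma~\ref{homogen2} for the homogeneous reduction and Lemma~\ref{distinctpolyproperty} in place of the single-variable fact ``$f(x+\gamma)=f(x) \Rightarrow f$ constant.'' I would first verify that $h$ and $H$ act bijectively on $M$: the kernels are submodules (by $hu=ruh$, $hd=r^{-1}dh$, $Hu=suH$, $Hd=s^{-1}dH$), so simplicity combined with $hM=HM=M$ forces them to vanish. Given $x\in \ann M\setminus I_c$, Lemma~\ref{homogen2} yields $x_ih^{\ell(x)-1}\in\ann M$ for each homogeneous component $x_i$, and bijectivity of $h$ promotes each $x_i$ into $\ann M$; since $I_c$ is graded (generated by one homogeneous element), some $x_i$ lies outside $I_c$. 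By symmetry I may assume this $x_i$ has nonnegative degree $k$ (the case $k<0$ is identical with $d$ in place of $u$ and yields a power of $d$), so $x_i=u^k g(h,H)$.

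Following the reduction procedure described just before the lemma, I replace $g$ by a distinctive polynomial representing the same coset modulo $I_c$; it remains nonzero since $x_i\notin I_c$. Among all witnesses $u^{k'}g'(h,H)\in\ann M\setminus I_c$ with $g'$ distinctive and nonzero, I choose one minimizing the number of monomials of $g'$. If $g'=ch^aH^b$ is a monomial, bijectivity of $h$ and $H$ yields $u^{k'}\in\ann M$. Otherwise, pick $(a,b)$ in the support of $g'$ and form
\[
x_i u - r^a s^b u x_i = u^{k+1}\bigl[g(rh,sH) - r^a s^b g(h,H)\bigr].
\]
The bracketed polynomial is distinctive with support $\mathrm{supp}(g)\setminus\{(a,b)\}$; by Lemma~\ref{distinctpolyproperty} it cannot vanish (else $g$ itself would be the monomial $ch^aH^b$), so it produces a witness in $\ann M\setminus I_c$ with strictly fewer terms, contradicting minimality.

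For the special case I would replay the argument with $\langle u^m\rangle$ replacing $I_c$; this is legitimate because $\langle u^m\rangle \supseteq I_{C^m}$ (since $d^m u^m$ is a nonzero scalar multiple of $H^m-C^mh^{jm}$) and $\langle u^m\rangle$ is graded. Any homogeneous witness of degree $\geq m$ automatically lies in $\langle u^m\rangle$, so the witness has degree $k<m$. If $k<0$ we obtain a power of $d$ in $\ann M$, whence $d^m\in\ann M$ via Lemma~\ref{specialcase2}'s submodule dichotomy (if $d^\ell\in\ann M$, the alternative $d^mM=M$ forces $d^\ell M = d^{\ell \bmod m}M$, which is nonzero for $\ell\not\equiv 0\pmod m$ and equals $M$ otherwise, in either case contradicting $d^\ell M=0$). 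The main obstacle is the case $0\leq k<m$, yielding some $u^\ell\in\ann M$ with $\ell\leq k<m$; here I would invoke Corollary~\ref{findimcor} via the identity $du^\ell-s^\ell u^\ell d = g_\ell(h)u^{\ell-1}$ with $g_\ell(h)=C(s-r^j)h^j\sum_{i=0}^{\ell-1}(s/r^j)^i$ nonzero for $1\leq\ell<m$ (because $s/r^j$ has order exactly $m$), deducing $M$ is finite-dimensional, and then consulting the annihilator computations of Section~2.3 to confirm $d^m\in\ann M$; the degenerate subcase $s=r^j$ (so $m=1$) is handled by a direct argument using $du=sud$ and $hM=M$. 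The symmetric claim with $u$ and $d$ swapped follows identically.
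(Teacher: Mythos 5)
Your argument for the first assertion is correct and is essentially the paper's: homogeneous reduction via Lemma~\ref{homogen2}, bijectivity of $h$ and $H$, reduction to a nonzero distinctive polynomial modulo $I_c$, and the commutator $xu-r^as^bux$ together with Lemma~\ref{distinctpolyproperty}. Minimizing the number of monomials instead of the degree is a harmless variation.

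The special case is where your proposal breaks down, in the subcase $0\le k<m$. You claim that replaying the minimization with $\langle u^m\rangle$ in place of $I_c$ yields $u^\ell\in\ann M$ with $\ell\le k<m$, but the commutator step \emph{raises} the $u$-degree: from a witness $u^{k_0}g_0$ it produces $u^{k_0+1}[\cdots]$. When $k_0+1\ge m$ the new element lies in $\langle u^m\rangle$, so it is not a witness and no contradiction with minimality arises; and if you instead minimize over all elements $u^{k'}g'\in\ann M$ with $g'$ distinctive (dropping the requirement of lying outside $\langle u^m\rangle$), the power you extract can be $u^{k_0}$ with $k_0\ge m$, which is vacuous. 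Nothing in your argument forces the exponent below $m$. The paper's proof is engineered precisely to avoid this: it first uses $d^{m-k}u^{m-k}u^k\in\langle u^m\rangle$ to reduce the $H$-degree of $f$ below $m-k$, then left-multiplies the witness by $d^k$ to obtain a \emph{nonzero distinctive element of $\ann M\cap L_0$} (distinctiveness survives because the $H$-degree stays below $m$ and $\Se=\langle(jm,m)\rangle$), and then runs the commutator minimization in the $d$-direction, where the degree moves away from $u^m$ and the terminal case $d^0$ is excluded by $hM=HM=M$; this is what produces a genuinely positive power of $d$.

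Even in the favorable subcase where you do obtain $u^\ell\in\ann M$ with $1\le\ell<m$, the final step ``consult Section~2.3'' is not automatic: the classification of finite-dimensional simple modules includes $\overline{F}_c(\zeta,\rho)$ with $\prod\beta_i=0$, on which $u$ is nilpotent but $d$ acts invertibly, so finite-dimensionality plus $u^m\in\ann M$ does not by itself give $d^m\in\ann M$. One must rule out the cyclic modules here, e.g.\ by noting that in the situation of Lemma~\ref{specialcase2} the parameter $r$ is not a root of unity, so a finite $\Phi$-orbit forces $\lambda=0$, contradicting $hM=M$; only then does $M=F_{\text{hw}}(\lambda)$ and $d^{n+1}\in\ann M$ give $d^mM=0$ via the dichotomy $d^mM\in\{0,M\}$. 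Both gaps are fillable, but as written the special case does not go through.
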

\begin{proof}
By Lemma~\ref{homogen2} $\ann M$ contains
a homogeneous element $x$ not in $I_c$. For definiteness
assume that $x$ has degree $k\geq 0$; the
case $k<0$ is similar. We can write
$x=u^kf(h,H)$ where $f\in\C[x,y]$ is a nonzero
polynomial. Since $x\not\in I_c$, we can assume
that $f$ is a nonzero distinctive polynomial.

Among all elements of the form $u^kf(h,H)$ where
$f$ is nonzero and distinctive, choose one where the degree
of $f$ is minimal. Let's say the degree of $f$ is
$(a,b)$. Then
\[
xu-r^as^b ux = u^kf(h,H)u-r^as^b u^{k+1}f(h,H)
= u^{k+1}[f(rh,sH)-r^as^bf(h,H)].
\]
The polynomial $f(rh,sH)-r^as^bf(h,H)$ is
distinctive and has smaller degree than $f$,
so it must be zero. By Lemma~\ref{distinctpolyproperty},
$f(h,H)$ is a scalar multiple of $h^aH^b$, so $\ann M$
contains $u^kh^aH^b$. Since $hM=HM=M$,
we conclude that $\ann M$ contains $u^k$. 

Suppose now that we are in the special
case of Lemma~\ref{specialcase2}. Assume that
$\ann M\supsetneq \langle u^m\rangle$. As above,
$\ann M$ contains a homogeneous element $x$
of degree $k$ not in $\langle u^m\rangle$; here 
we can assume that $k<m$. We note 
that since $\ann M$ contains $u^m$,
it also contains $d^{m-k}u^m=
d^{m-k}u^{m-k}u^k$. Here $d^{m-k}u^{m-k}
=\prod_{i=1}^{m-k} s^i[H-(r^js^{-1})^iCh^j]$
is a polynomial whose $H$-degree 
is $m-k$. Thus we can write
$x=u^k f(h,H)$ where $f$ is a polynomial
whose $H$-degree is smaller than $m-k$.
Therefore $\ann M$ contains the element
$d^kx=d^ku^kf(h,H)=g(h,H)$, a polynomial
whose $H$-degree is at most $m-1$. Since
$\Se=\{i(jm,m): i\in\Z\}$, the polynomial
$g$ is distinctive. Thus $\ann M$ contains
an element of the form $d^ig(h,H)$
where $g$ is nonzero and distinctive.
By the calculation above, $\ann M$
contains a power of $d$. Since $d^mM=0$
or $d^mM=M$, we conclude that $d^m
\in\ann M$, as required.
\end{proof}
In most cases, if $\ann M$ contains a power
of $u$ or a power of $d$, then $M$ must 
be finite-dimensional. 
\begin{lem}\label{bigidealimplyfindim}
Let $M$ be a simple module with
$hM=HM=M$. Suppose $u^m\in\ann M$
but $u^{m-1}\not\in\ann M$, $m\geq 1$. Then 
$\cp$ is not the zero polynomial and $M$ is
finite-dimensional, except in the special
case of Lemma~\ref{specialcase2}.
The same conclusion holds when $d^m\in\ann M$
and $d^{m-1}\not\in\ann M$. 

In the special case of Lemma~\ref{specialcase2},
if $\ann M$ contains both $u^m$ and $d^m$,
then $M$ is finite-dimensional. 
\end{lem}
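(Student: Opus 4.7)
The plan proceeds in three stages. First, to establish that $\cp$ is not the zero polynomial I argue by contradiction. If $\cp = 0$ then $H = ud$, and the conformal relations $Hu = suH$ and $Hd = s^{-1}dH$ give, by a short induction, $u^m d^m = s^{-m(m-1)/2} H^m$. Hence $u^m \in \ann M$ would force $H^m M = 0$, contradicting $HM = M$. Next, mirroring the derivation at the start of this section, a routine induction from $du - sud = \phi(h) = s\cp(h) - \cp(rh)$ yields the companion identity
\[
du^k - s^k u^k d = u^{k-1}\bigl[s^k\cp(h) - \cp(r^k h)\bigr].
\]
Taking $k = m$ and applying both sides to $M$, I conclude that $u^{m-1} q(h)$ annihilates $M$, where $q(h) := s^m\cp(h) - \cp(r^m h)$. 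Since $u^{m-1} M \neq 0$, the polynomial $q$ cannot be a nonzero constant.

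In the first surviving case, $q$ is nonconstant; then the $u$-version of Corollary \ref{findimcor} (with $f = q$) directly gives that $M$ is finite-dimensional. In the other case $q \equiv 0$, writing $\cp(h) = \sum c_i h^i$ forces $c_i(s^m - r^{im}) = 0$ for each $i$; combined with $\cp \neq 0$ and the cyclic structure of $\Se$ (available because at least one of $r,s$ is not a root of unity), the only consistent configuration is $\Se = \langle (jm, m)\rangle$ for some $j \geq 0$ with $\cp(h) = Ch^j$ and $C \neq 0$, which is precisely the setup of Lemma \ref{specialcase2}. The parallel hypothesis $d^m \in \ann M$, $d^{m-1} \notin \ann M$ is handled symmetrically using the formula $d^m u - s^m u d^m = [s^m\cp(h) - \cp(r^m h)]d^{m-1}$ already displayed at the start of the section.

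For the last assertion, now assume we are in this special case and $\ann M$ contains both $u^m$ and $d^m$. The computation from Lemma \ref{specialcase2} (and its mirror for $u^m d^m$) exhibits $u^m d^m$ as a nonzero scalar multiple of $H^m - C^m h^{jm}$, so this element also lies in $\ann M$; equivalently, $H^m$ acts as $C^m h^{jm}$ on $M$. Because $u^m$ then acts as the scalar $0$, by Lemma \ref{findim} it suffices to produce a weight vector. I would pick nonzero $v_0$ in $\ker d|_M$ (which exists because $d^m M = 0$); then $Hv_0 = Ch^j v_0$ and $ud\,v_0 = 0$, so it only remains to exhibit a nonconstant polynomial $f$ with $f(h)v_0 = 0$, after which Corollary \ref{findimcor} finishes the argument.

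The main obstacle is this very last step: the relations $Hv_0 = Ch^j v_0$ and $H^m = C^m h^{jm}$ are on their own consistent with $\C[h] v_0$ being free as a $\C[h]$-module, so simplicity of $M$ has to be invoked. The approach I would take is to factor $H^m - C^m h^{jm} = \prod_{i=0}^{m-1}\bigl(H - \zeta^i C h^j\bigr)$ with $\zeta = sr^{-j}$ a primitive $m$th root of unity (the fact that $(jm, m)$ is the primitive generator of $\Se$ forces $\zeta$ to have order exactly $m$), verify that $u$ and $d$ cyclically permute the corresponding $H$-eigen\-spaces of $M$, and deduce that a $\C[h]$-free piece in this decomposition would yield a proper $L$-submodule, contradicting simplicity.
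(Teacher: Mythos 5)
Your first two stages are sound and run essentially parallel to the paper's proof. For $\cp=0$ you use the identity $u^md^m=s^{-m(m-1)/2}H^m$ to contradict $HM=M$ (the paper instead notes $uM$ is a submodule, forcing $uM=0$ and hence $HM=0$; both work). For the dichotomy you use $du^m-s^mu^md=u^{m-1}[s^m\cp(h)-\cp(r^mh)]$ and feed it to Corollary~\ref{findimcor}, exactly as the paper does with the mirror identity, and your analysis of when $s^m\cp(h)-\cp(r^mh)$ vanishes identically correctly isolates the configuration $\cp(h)=Ch^j$, $s^m=r^{jm}$ of Lemma~\ref{specialcase2}.

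The genuine gap is in the final assertion, and you have named it yourself: after choosing $v_0$ with $dv_0=0$ and observing $Hv_0=Ch^jv_0$, you still need a \emph{nonconstant} polynomial $f$ with $f(h)v_0=0$, and your proposed route --- that ``a $\C[h]$-free piece in this decomposition would yield a proper $L$-submodule'' --- is not carried out and does not obviously work as stated: the span of $\{h^au^iv_0\}$ is all of $M$ whether or not $\C[h]v_0$ is free, so freeness alone does not hand you a proper submodule. The missing idea is to use simplicity in the form $xhv_0=v_0$ for some $x\in L$: since $dhv_0=0$, one may take $x=\sum_i p_i(h)u^i$, so that $\sum_i r^{-i}p_i(h)hv_i=v_0$ where $v_i=u^iv_0$. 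The operator $Hh^{-j}$ (which makes sense since $\ker h=0$ on $M$) satisfies $Hh^{-j}u=sr^{-j}uHh^{-j}$, so the vectors $p_i(h)hv_i$ lie in distinct eigenspaces of $Hh^{-j}$ with eigenvalues $(sr^{-j})^iC$, $0\leq i\leq m-1$, and these eigenvalues are distinct because $sr^{-j}$ has order exactly $m$. Comparing components forces $p_i(h)hv_i=0$ for $i>0$ and $(p_0(h)h-1)v_0=0$, which is the required nonconstant polynomial; Corollary~\ref{findimcor} (or Lemma~\ref{findim} after extracting an $h$-eigenvector from $\spa\{h^av_0\}$) then finishes. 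So your setup --- the eigenspace decomposition for $Hh^{-j}$ and the cyclic action of $u$ and $d$ --- is the right scaffolding, but the decisive step is solving $xhv_0=v_0$ and projecting onto the $i=0$ eigenspace, which your sketch does not supply.
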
 
\begin{proof}
If $\cp$ is the zero polynomial, then $du=sud$.
Thus $uM$ is a submodule and hence must be
either $0$ or $M$. (Similarly, either $dM=0$
or $dM=M$.) Since $u^mM=0$, we must have
$uM=0$. Thus $ud=H$ acts as the zero operator
on $M$, contradicting $HM=M$. 

So assume now that $\cp$ is nonzero.
Recall that
\[
d^mu-s^mud^m=[s^m\cp(h)-\cp(r^mh)]d^{m-1}.
\]
If $\cp(h)=\sum_i a_ih^i$, then 
$s^m\cp(h)-\cp(r^mh)=\sum_i a_i(s^m-r^{im})h^i$,
which is nonzero unless $s^m=r^{jm}$ for some
$j\geq 0$ and $a_i=0$ for $i\neq j$, i.e.,
unless we are in the special situation of
Lemma~\ref{specialcase2}.  (Note that
the equality $s^m=r^{jm}$ means that
$r$ cannot be a root of unity, since that would
force $s$ to be a root of unity also, and we
have excluded this possibility. Thus $s^m=r^{jm}$
is true only for at most one value of $j$.) 
We obtain the conclusion from Corollary~\ref{findimcor}. 

Suppose now we are in the situation of
Lemma~\ref{specialcase2}. The proof
in this case is quite similar to the proof
of the last case of Lemma~\ref{findimresult}.
Choose
an element $v\in M$ such that $dv=0$
(such an element exists since $d^m\in
\ann M$). Define $v_i=u^iv$ for $i\geq 0$;
since $u^m\in\ann M$, we have
$u_{m+i}=0$ for $i\geq 0$. 

Now $0=udv_0=(H-Ch^j)v_0$, so
$Hv_0=Ch^jv_0$. Recall that as an
operator on $M$, the operator $h$
has an inverse $h^{-1}$ (although
the element $h$ is not invertible in $L$);
thus $Hh^{-j}v_0=Cv_0$. Since
$Hh^{-j}u=sr^{-j}uHh^{-j}$, we see
that $Hh^{-j}v_i=(sr^{-j})^i v_i$
for $0\leq i\leq m-1$. Each $v_i$
is an eigenvector of the operator
$Hh^{-j}$, with distinct eigenvalues.

Since $M$ is simple, there exists $x\in L$
such that $xhv_0=v_0$. Since $dhv_0=0$,
we can assume that $x$ can be
written as $\sum_i p_i(h)u^i$, so
\[
\sum_i p_i(h)u^ihv_0=
\sum_i r^{-i}p_i(h)hu^iv_0=
\sum_i r^{-i}p_i(h)h v_i = v_0.
\]
For each $i$,  $r^{-i}p_i(h)hv_i$ is an eigenvector
of $Hh^{-j}$ with eigenvalue $(sr^{-j})^i$.
They form a linearly independent set.
Thus we have $p_i(h)=0$ for $i>0$.
Therefore $(p_0(h)h-1)v_0=0$,
i.e., there exists a nontrivial polynomial
in $h$ that annihilates $v_0$. Thus
we can apply Lemma~\ref{findim}
to conclude that $M$ is finite-dimensional.
\end{proof}
\subsection{Proving primitivity} It remains to show
that the ideals $I_c$, $\langle h\rangle$, $\langle H \rangle$,
$\langle u \rangle$, $\langle d\rangle$, 
$\langle u^m \rangle$, $\langle  d^m \rangle$
are all primitive ideals under the right conditions.
\begin{lem}
Suppose $L$ is conformal, with $\gamma=0$.
Then the following ideals are primitive:
\begin{enumerate}
\item $\langle h\rangle$ if $o(s)=\infty$;

\item $\langle H\rangle$ if $o(r)=\infty$ and $\cp$
is not identically zero;

\item $\langle u\rangle$ and 
$\langle d\rangle$ if $o(r)=\infty$ and
$\cp$ is identically zero;

\item $\{0\}$ if $\Se(r,s)=\{(0,0)\}$;

\item $\langle h^nH^m-c\rangle$ if
$c\in\C^\times$ and 
$\Se(r,s)=\langle (n,-m)\rangle$
(where $n, m>0$);

\item $\langle H^m-ch^n\rangle$,
where $c\in\C^\times$, $\Se(r,s)=
\langle (n,m)\rangle$ (where
one of $n$ or $m$ is nonzero),
and we are not in the case of 
Lemma~\ref{specialcase2};

\item $\langle u^m\rangle$ and 
$\langle d^m\rangle$ if
we are in the case of 
Lemma~\ref{specialcase2}, i.e.,
if $\cp(h)=Ch^j$ for some
$C\neq 0$ and $j\geq 0$,  and 
$\Se(r,s)=\langle (jm,m)\rangle$ 
for some $m>0$.
\end{enumerate}
\end{lem}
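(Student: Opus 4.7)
The plan is to construct, for each ideal $J$ in the list, a simple $L$-module whose annihilator is exactly $J$. For items (1), (2), (4), (5), and (6) I use the universal weight modules $W(\lambda,\beta)$ of Section~\ref{universal}; for (3) and (7), which require $u$ or $d$ (or a power thereof) to act as zero, I build non-weight modules in the spirit of the one constructed at the end of Section~3. On $W(\lambda,\beta)$ a direct computation gives $hv_i = r^i\lambda v_i$ and, using the conformality identity $\phi(x) = s\cp(x) - \cp(rx)$, an easy induction shows $Hv_i = s^i c v_i$ where $c = \beta + \cp(\lambda)$. Thus I may treat $\lambda$ and $c$ as free parameters (recovering $\beta = c - \cp(\lambda)$); the weights $(\lambda_i,\beta_i)$ are distinct and all $\beta_i$ are nonzero outside a countable set of $(\lambda,c)$, so a generic choice produces a simple infinite-dimensional $W(\lambda,\beta)$ realizing any prescribed scalars for $h$ and $H$.

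For (1), $o(s)=\infty$: set $\lambda = 0$ and pick $c \ne 0$ generic, yielding $hM = 0$ and $HM = M$, whence Lemma~\ref{notlarger} gives $\ann M = \langle h\rangle$. For (2), $o(r)=\infty$ and $\cp \ne 0$: set $c = 0$ and pick $\lambda$ generic, giving $HM = 0$ and $hM = M$; Lemma~\ref{confbigideals} yields $\ann M = \langle H\rangle$. For (4), $\Se = \{(0,0)\}$ forces $r,s$ multiplicatively independent; pick $\lambda,c$ both nonzero and generic, and use Lemmas~\ref{containspower2} and~\ref{bigidealimplyfindim} to rule out any enlargement beyond $\{0\}$. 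For (5), $h^n H^m$ acts on $W$ as the scalar $\lambda^n c^m$ (since $r^n s^m = 1$), so I solve $\lambda^n c^m = c_0$. For (6), using $s^m = r^n$, the element $H^m - c_0 h^n$ acts on $v_i$ as $r^{in}(c^m - c_0 \lambda^n)v_i$, so I solve $c^m = c_0\lambda^n$. In both (5) and (6), enlargement of $\ann M$ beyond $I_{c_0}$ is ruled out by Lemmas~\ref{containspower2} and~\ref{bigidealimplyfindim}, using that we are outside the special case of Lemma~\ref{specialcase2}.

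For (3), with $o(r)=\infty$ and $\cp = 0$, I take $M$ with basis $\{v_i : i \in \Z\}$ and action $uv_i = 0$, $dv_i = v_{i+1}$, $hv_i = r^{-i}v_{i-1}$; the defining relations reduce to trivialities given $\cp = 0$. For simplicity, observe that $dh$ acts diagonally as $dh \cdot v_i = r^{-i}v_i$ with distinct eigenvalues, so a length-minimization argument as in Section~3 forces any nonzero submodule to contain some $v_i$; then closure under $d$ and $h$ produces all of $M$. Lemma~\ref{confbigideals} then pins down $\ann M = \langle u\rangle$, and swapping $u$ and $d$ yields $\langle d\rangle$. For (7), in the special case $\cp(h) = Ch^j$ and $\Se = \langle(jm,m)\rangle$, I build an analogous non-weight module with basis indexed by $\Z$ on which $u^m$ (resp.\ $d^m$) acts as zero, rigging the actions of $h$ and $d$ (resp.\ $u$) so that $du - sud = \phi(h)$ holds. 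The critical feature enabling the construction is the identity $s^m\cp(h) - \cp(r^m h) = C(s^m - r^{jm})h^j = 0$ granted by the special case: this is exactly the obstruction that must vanish for $u^m = 0$ (resp.\ $d^m = 0$) to be compatible with the defining relations. Simplicity is verified by an ``almost eigenvector'' argument as in the last module of Section~3, after which Lemma~\ref{bigidealimplyfindim} in its special-case sub-clause rules out strict enlargement, since a larger annihilator would force finite-dimensionality.

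The main obstacle is (7): outside the special arithmetic regime $s^m = r^{jm}$ with $\cp = Ch^j$, no infinite-dimensional simple module with annihilator precisely $\langle u^m\rangle$ or $\langle d^m\rangle$ exists (Lemma~\ref{bigidealimplyfindim} would force finite-dimensionality), so the explicit construction must exploit these constraints precisely to make both the relations and the infinite-dimensional simplicity consistent. Getting this construction right—and confirming it is simple—is the most delicate step; the other cases reduce mechanically to choosing weight-module parameters generically and invoking the classification lemmas of the previous subsections.
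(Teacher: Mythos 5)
Your treatment of items (1), (2), (4), (5), and (6) matches the paper's proof essentially line for line: the same universal weight modules $W(\lambda,\beta)$ reparametrized by $\lambda$ and the $H$-eigenvalue $\mu=\beta+\cp(\lambda)$ (your $c$), the same countability argument for choosing the parameters so that all $\beta_i\neq 0$ and the weights are distinct, and the same appeal to Lemmas~\ref{notlarger}, \ref{confbigideals}, \ref{containspower2} and~\ref{bigidealimplyfindim} to show the annihilator is no larger than the prescribed ideal. Your module for item (3) is the mirror image of the one in the paper (which takes $uv_i=v_{i+1}$, $dv_i=0$, $hv_i=r^iv_{i-1}$), and your simplicity argument via the diagonal action of $dh$ with distinct eigenvalues is correct and equivalent to the paper's.

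The genuine gap is in item (7). You assert that one can ``build an analogous non-weight module\dots rigging the actions of $h$ and $d$ so that $du-sud=\phi(h)$ holds,'' but you never exhibit the module, and that construction is the entire content of item (7). The vanishing of $s^m\cp(h)-\cp(r^mh)$ is a necessary compatibility condition, but observing it does not produce the module. The one the paper writes down is not a routine variant of the item-(3) module: $h$ does not act diagonally but shifts the index down by $m$ (via $hv_i=r^{i+(n-m)/2}v_{i-m}$, which even requires choosing a square root of $r$), $d$ shifts the index down by $n+1$ with coefficients $Cs^i(1-\theta^i)$ where $\theta=r^js^{-1}$ is a primitive $m$th root of unity, and verifying $du-sud=(s-r^j)Ch^j$ uses $r^n=s^m$ and $\theta^m=1$ in an essential way; simplicity is then checked via the eigenvalues of $u^mh$. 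Without this (or some equivalent) explicit construction you have shown only, via Lemma~\ref{bigidealimplyfindim}, that an annihilator strictly containing $\langle u^m\rangle$ or $\langle d^m\rangle$ forces finite-dimensionality --- not that these ideals are actually realized as annihilators of simple modules. Flagging the step as delicate is not the same as carrying it out.
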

\begin{proof}
We again make use of the universal weight
module $W(\lambda,\beta)$ described in
section~\ref{universal}. In this case,
it is more convenient to use the parameters
$\lambda$ and $\mu=\beta+\cp(\lambda)$
instead of $\lambda$ and $\beta$.
We have $hv_i=r^i\lambda v_i$
and $Hv_i=s^i\mu v_i$, hence
$ud v_i=(H-\cp(h))v_i=[s^i\mu-\cp(r^i\lambda)]v_i$.
The module $W(\lambda,\mu)$ is
simple if the weights $(r^i\lambda,s^i\mu)$
are distinct and $s^i\mu-\cp(r^i\lambda)\neq 0$
for all $i\in\Z$.

Suppose now that $o(s)=\infty$.
Set $\lambda=0$. Then the weights
$(r^i\lambda,s^i\mu)=(0,s^i\mu)$
are certainly distinct if $\mu\neq 0$, and we can
choose a nonzero value of $\mu$ such that
$s^i\mu-\cp(0)\neq 0$ for all $i\in\Z$.
Thus $W(0,\mu)$ is simple;
its annihilator is $\langle h\rangle$
by Lemma~\ref{notlarger}. This proves
(1).

Suppose next that $o(r)=\infty$
and $\cp$ is not identically zero.
Set $\mu=0$. The weights 
$(r^i\lambda,s^i\mu)=(r^i\lambda,0)$
are distinct if $\lambda\neq 0$.
In this case, $s^i\mu-\cp(r^i\lambda)$
is equal to $-\cp(r^i\lambda)$.
Since $\cp$ is not identically zero,
there are only finitely many
values $x$ such that $\cp(x)=0$;
thus there are only countably
many values of $\lambda$
such that $\cp(r^i\lambda)=0$
for some $i\in\Z$. If we choose
a nonzero value of $\lambda$
outside these countably many
special values, then 
$\cp(r^i\lambda)\neq 0$ for 
\emph{all} $i\in\Z$. For this
value of $\lambda$, $W(\lambda,0)$
is simple. Its annihilator must
be $\langle H\rangle$ by 
Lemma~\ref{confbigideals}.
This proves (2).

Suppose now that $\Se(r,s)=\{(n,0)\}$. 
Then $s$ is not a root of
unity, so the weights $(r^i\lambda,s^i\mu)$
are distinct when $\mu\neq 0$. 
Now if $n\neq 0$, then for any $c\in\C^\times$,
choose $\lambda$
so that $\lambda^n=1/c$. 
If $n=0$, we can choose any nonzero
value for $\lambda$; in this
case $c$ has to be 1. 
Then $h^n$ acts as the scalar $\lambda^n
=c$.
Furthermore,
there are only countably many
values of $s^{-i}\cp(r^i\lambda)$ as
$i$ ranges over $\Z$,
so if we choose $\mu$ to be
different from these countably
many values, then we have
$s^i\mu-\cp(r^i\lambda)\neq 0$
for all $i\in\Z$. Then $W(\lambda,\mu)$
is simple, and its annihilator must
be $I_c$ (or $I_1$ when $n=m=0$)
by Lemmas~\ref{containspower2}
and~\ref{bigidealimplyfindim}.
This proves (1) and parts of (6).

Suppose next  that $\Se(r,s)=\langle (n,-m)\rangle$. 
Then $h^nH^mv_i=r^ns^m\lambda^n\mu^m
v_i=\lambda^n\mu^m v_i$, i.e., 
$h^nH^m$ acts as the scalar $\lambda^n\mu^m$
on $W(\lambda,\mu)$. I claim that
for any $c\in\C^\times$, we can find
values for $\lambda$ and $\mu$ 
such that $\lambda^n\mu^m=c$,
$(r^i\lambda,s^i\mu)$ are all distinct,
and $s^i\mu-\cp(r^i\lambda)\neq 0$
for all $i\in\Z$. This would imply that
$W(\lambda,\mu)$ is simple. Its annihilator
contains $I_c$, and by Lemma~\ref{bigidealimplyfindim}
it cannot be larger than $I_c$.

To prove the claim, set $\mu^m=
c/\lambda^n$. Then the equation
$s^i \mu -\cp(r^i\lambda)=0$
implies that  $s^{im}c=\lambda^n\cp(r^i\lambda)^m$.
If $\cp$ is the zero polynomial,
then there is no solution for any $i\in\Z$.
Otherwise, for each $i\in\Z$, there
are only finitely many values 
of $\lambda$ satisfying the equation
(recall that $n>0$, so 
$\lambda^n\cp(r^i\lambda)^m$ is
a nontrivial polynomial in $\lambda$).
Thus there are at most countably
many values of $\lambda$ such that
$s^{im}c=\lambda^n\cp(r^i\lambda)^m$
for some $i\in\Z$. If we choose
a nonzero value of $\lambda$ different from
any of these countably many values,
we get $s^{im}c/\lambda^n \neq \cp(r^i\lambda)^m$
for all $i\in\Z$, and hence
$s^i\mu-\cp(r^i\lambda)\neq 0$ for
all $i\in\Z$. This proves the claim and
finishes the proof of (5). 

Suppose next that $\Se(r,s)=\langle (n,m)\rangle$
with $m\neq 0$.
Then $H^mv_i=s^m\mu^m v_i
=r^n\mu^m v_i=(\mu^m/\lambda^n)r^n\lambda^n v_i
=(\mu^m/\lambda^n) h^nv_i$, i.e., 
$H^m-(\mu^m/\lambda^n)h^n$ is in
$\ann W(\lambda,\mu)$. I now claim that,
except in the case of Lemma~\ref{specialcase2},
for any $c\in\C^\times$ 
we can always find values of $\lambda$
and $\mu$ such that $c=\mu^m/\lambda^n$,
$(r^i\lambda,s^i\mu)$ are all distinct,
and $s^i\mu-\cp(r^i\lambda)\neq 0$
for all $i\in\Z$. As before, this would imply that
$W(\lambda,\mu)$ is simple; its annihilator
contains $I_c$, and by Lemma~\ref{bigidealimplyfindim}
it cannot be larger than $I_c$.

To prove the claim, we set $\mu^m=c\lambda^n$.
Then the equation 
$s^i\mu-\cp(r^i\lambda)= 0$ implies
that
$s^{im}c\lambda^n = \cp(r^i\lambda)^m$.
This is a nontrivial polynomial equation in
$\lambda$, unless $\cp(h)=Ch^j$ for some
$j\geq 0$, $n=jm$, and $r^{ijm}C^m=s^{im}c$
(and note that the last equality implies
$C^m=c$ since $r^{jm}=s^m$). Thus outside
the special case of Lemma~\ref{specialcase2},
there are at most countably many values
of $\lambda$ such that $s^i\mu-\cp(r^i\lambda)= 0$
for some $i\in\Z$. All we have to do
now is pick a nonzero value of $\lambda$
outside this countably many set of values;
as before, this suffices to prove the claim. 
We have now proven (6). 

We now tackle the special case of
Lemma~\ref{specialcase2}. Recall
that in this case we have $r^n=s^m$,
$n=jm$, $\cp(h)=Ch^j$, and $o(r^js^{-1})=m$.
Write $\theta$ for $r^js^{-1}$. We construct
an infinite-dimensional simple module 
whose annihilator is $\langle d^m\rangle$;
we can show that $\langle u^m\rangle$
is primitive in a similar fashion.

The construction proceeds as follows. (Note
that it works even when $j=0$.)
The module $M$ has basis $\{v_i:i\in\Z\}$,
and the action of $L$ on $M$ is given by
\[
uv_i=v_{i+1},\quad
dv_i=Cs^i(1-\theta^i)v_{i-n-1},\quad
hv_i=r^{i+(n-m)/2}v_{i-m},
\]
note that we might need a square root of $r$.
It is not hard to check that $huv_i=ruhv_i$
for all $i\in\Z$. Verifying the other relations
is not as straightforward. We have
\[
dhv_i=r^{i+(n-m)/2}dv_{i-m}
=Cr^{i+(n-m)/2}s^{i-m}(1-\theta^{i-m})v_{i-m-n-1},
\]
while
\[
rhdv_i=rCs^i(1-\theta^i)hv_{i-n-1}
=Cr^{i-n+(n-m)/2}s^i(1-\theta^i)v_{i-n-1-m}.
\]
Since $\theta^m=1$ and $r^n=s^m$, the two
expressions are equal, proving that $dhv_i=rhdv_i$
for all $i\in\Z$. 

To verify the last relation, we first calculate
that $h^jv_i= r^{ij}v_{i-n}$. In the algebra
$L$, we have $du-sud=(s-r^j)Ch^j$. Now
\[
(du-sud)v_i=Cs^{i+1}[(1-\theta^{i+1})-(1-\theta^i)]v_{i-n}
=Cs^{i+1}\theta^i(1-\theta)v_{i-n},
\]
while
\[
(s-r^j)Ch^jv_i=(s-r^j)Cr^{ij}v_{i-n}=C(s-s\theta)(s\theta)^iv_{i-n}
=Cs^{i+1}\theta^i(1-\theta)v_{i-n},
\]
thus verifying that $M$ is indeed an $L$-module.

It is easy to show that $M$ is simple. Each $v_i$ is
an eigenvector of $u^mh$ with eigenvalue
$r^{(n-m)/2} r^i$. These eigenvalues
are distinct (recall that $r$ is not a root of unity),
implying that $M$ is simple. Note that 
$d^m\in\ann M$; by Lemma~\ref{bigidealimplyfindim}
we must have $\ann M=\langle d^m\rangle$.
This proves (7).

We now note that the construction above works
when $C=0$ (hence $\cp=0$) and $n=m=1$. The action
of $L$ becomes
\[
uv_i=v_{i+1},\quad
dv_i=0,\quad
hv_i=r^{i}v_{i-1}.
\]
It is straightforward to check that with this
action of $L$, $M$ is still
an $L$-module no matter the values
of $r$ and $s$, as long as $\cp$ is
identically zero. When $r$ is not a root
of unity, the proof above shows that 
$M$ is simple. Also as above, its annihilator
is $\langle d\rangle$. Thus $\langle d\rangle$
is primitive. Similarly, $\langle u\rangle$
is primitive. This proves (3). 
\end{proof}

\subsection{A list of primitive ideals}\label{conformallist}
We now summarize our results and provide
a list of primitive ideals of $L$ when $L$ is conformal.
As before we include only primitive ideals
that are annihilators of infinite-dimensional
modules. In the first table below, we list 
the primitive ideals when $r$ or $s$
is a root of unity. 

\bigskip
\begin{center}
\begin{tabular}{|c|c|c|c|}\hline
$o(r)$ & $o(s)$ & $\cp$ & primitive ideals \\ \hline
$n$ & $m$ & any & $-$ \\
$n$ & $\infty$ & any & $\langle h\rangle$, 
	$\langle h^n-c\rangle$ ($c\in\C^\times$)\\
$\infty$ & $m$ & $0$ & $\langle u\rangle$,
	$\langle d\rangle$, $\langle H^m-c\rangle$
	($c\in\C^\times$)\\
$\infty$ & $m$ & $C\neq 0$ & $\langle u^m\rangle$,
	$\langle d^m\rangle$, $\langle H\rangle$, 
	$\langle H^m-c^m\rangle$ ($c\in\C^\times,
	c^m\neq C^m$)\\
$\infty$ & $m$ & nonconstant & $\langle H\rangle$,
	$\langle H^m-c\rangle$ ($c\in\C^\times$)\\
\hline
\end{tabular}
\end{center}
\bigskip

In the second table, we list primitive ideals
when both $r$ and $s$ are not roots of unity.
The results are expressed in terms of
the generator of $\Se(r,s)$. In the table, 
$n$, $m$, and $j$ are all positive integers.

\bigskip
\begin{center}
\begin{tabular}{|c|c|c|}\hline
generator & $\cp$ & primitive ideals \\ \hline
$(0,0)$ & $0$ & $\{0\}$, $\langle h\rangle$,
	$\langle u\rangle$, $\langle d\rangle$\\
$(0,0)$ & nonzero & $\{0\}$, $\langle h\rangle$, 
	$\langle H\rangle$\\
$(n,-m)$ & $0$ & $\langle h\rangle$,
	$\langle u\rangle$, $\langle d\rangle$,
	$\langle h^nH^m-c\rangle$ ($c\in\C^\times$)\\
$(n,-m)$ & nonzero & $\langle h\rangle$,
	$\langle H\rangle$,
	$\langle h^nH^m-c\rangle$ ($c\in\C^\times$)\\
$(jm,m)$ & $Ch^j$ ($C\neq 0$) & $\langle h\rangle$,
	$\langle H\rangle$, $\langle u^m\rangle$,
	$\langle d^m\rangle$,
	$\langle H^m-ch^{jm}\rangle$ ($c\in\C^\times, c\neq C^m$)\\
$(jm,m)$ & $0$ & $\langle h\rangle$,
	$\langle u\rangle$, $\langle d\rangle$,
	$\langle H^m-ch^{jm}\rangle$ ($c\in\C^\times$)\\
$(jm,m)$ & $\neq 0,\neq Ch^j$ & $\langle h\rangle$,
	$\langle H\rangle$, 
	$\langle H^m-ch^{jm}\rangle$ ($c\in\C^\times$)\\
$(n,m)$, $n\neq jm$ & $0$ & $\langle h\rangle$,
	$\langle u\rangle$, $\langle d\rangle$,
	$\langle H^m-ch^{n}\rangle$ ($c\in\C^\times$)\\
$(n,m)$, $n\neq jm$ & nonzero & $\langle h\rangle$,
	$\langle H\rangle$, 
	$\langle H^m-ch^{n}\rangle$ ($c\in\C^\times$)\\
\hline
\end{tabular}
\end{center}

\section{When $L$ is not conformal, $\gamma=0$}

In this section we assume that  $L$ is not conformal
and $\gamma=0$. If $\phi(h)=\sum_i a_ih^i$, recall
our assumption means $s=r^j$ and $a_j\neq 0$ 
for some $j\geq 0$. If  $r$ is not a root of
unity, then there is only one such value 
of $j$, but if $r$ is a root of unity, then
there could be more than one value
of $j$. 

We now establish some notation. Suppose
first that $r$
is a root of unity, with $o(r)=n$. We
fix $j$ so that $s=r^j$ and $0\leq j\leq n-1$.
The polynomial $\phi$ can then be
written as a sum of a conformal part $\phi_0$
and a nonconformal part $\phi_1$, where
\[
\phi_0(h)
=\sum_{i\not\equiv j} a_ih^i, \quad
\phi_1(h)=\sum_{i\equiv j} a_ih^i,
\]
(the congruences are mod $n$). Note
that $\phi_0$ is conformal, while
$\phi_1$ can be written as
$\phi_1(h)=sh^j\ncp(h^n)$ for some
polynomial $\ncp$. (We use the
constant factor $s$ in order to make
subsequent formulas easier to work 
with.) If $\cp$ is a conformal 
polynomial corresponding to $\phi_0$,
then $\phi(h)=s\cp(h)-\cp(rh)+sh^j\ncp(h^n)$.

The case $n=1$ (i.e., $r=s=1$) perhaps deserves
a special mention. Here $\phi_0(h)$
is the zero polynomial, and $\ncp(h^n)
=\ncp(h)=s^{-1}\phi(h)$. Thus we
can take $\cp(h)$ to be zero, and
all the action takes place in the
nonconformal part.

The situation is a bit simpler when
$r$ is not a root of unity.
We still split $\phi$ into a conformal
part and a nonconformal part, but
the nonconformal part is just a monomial:
$\phi_1(h)=sCh^j$ for some $C\neq 0$.

Note that this second case can be subsumed
into the previous case when 
$\ncp(h)=C$. Thus we can include
the second case in the first, even
though $h^n$ doesn't make sense
when $r$ and $s$ are not roots of
unity.  

Indeed, the two cases can be
made even more similar. Suppose
again that $o(r)=n$. 
Since $hu=ruh$, we have
$h^nu=r^nuh^n=uh^n$. Similarly,
$dh^n=h^nd$. Thus $h^n$ is a central
element, so $\ncp(h^n)$ also commutes
with everything in $L$. It follows
that if $M$ is a simple module,
then $\ncp(h^n)$ acts as a
scalar $C=C(M)$. If $C=0$,
then the nonconformal part
$\phi_1(h)$ acts as the zero
operator on $M$, and thus we
are in the same situation as the
conformal case. Looking at 
the first line of the first table in
section~\ref{conformallist},
we conclude that in this case
$M$ must be finite-dimensional.
Therefore
we can assume from now
on that the scalar $C$ is nonzero;
we can then write $\phi(h)=
s\cp(h)-\cp(rh)+sCh^j$ as operators
on $M$. This is just like
the case where $o(r)=\infty$. 

In either case, just as in the 
conformal case, we define $H$ to be
$ud+\cp(h)$. A straightforward
calculation then shows that
$Hu=su(H+\ncp(h^n)h^j)$
and $dH=s(H+\ncp(h^n)h^j)d$.
Then 
$Hx_i=s^ix_i(H+i\ncp(h^n) h^j)$
for $x_i\in L_i$. We can replace
$\ncp(h^n)$ by the nonzero
constant $C$ if the terms
are viewed as operators on
a simple module $M$.

\subsection{Homogeneous
elements in ideals}
We have the following lemma about
homogeneous elements in ideals, the
analogue of Lemmas~\ref{homogen}
and~\ref{homogen2}. It is unavoidably
more complicated in the
case $o(r)=n$. 
\begin{lem}\label{homogen3}
Let $I$ be an ideal of $L$. If $o(r)=n$,
assume also that $\ncp(h^n)-C\in I$, 
where $C\neq 0$.
Suppose $x\in I$ and $x=\sum_{i\in\Z} x_i$ (where
$x_i\in L_i$) is the homogeneous decomposition
of $x$. Then there exists an integer $m\geq 0$
such that $x_i h^m\in I$  
for all $i\in\Z$. 

\end{lem}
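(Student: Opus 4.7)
When $o(r)=\infty$, no extra hypothesis is needed and the argument of Lemma~\ref{homogen2} applies verbatim, giving $m=\ell(x)-1$. The substantive case is $o(r)=n$, where I would handle things in two stages that exploit two different commutators. First, group the homogeneous components of $x$ by residue mod $n$: set $y_k := \sum_{i \equiv k \imod n} x_i$ for $0 \le k \le n-1$. Since $hy_k = r^k y_k h$ and the scalars $r^0, r^1, \ldots, r^{n-1}$ are pairwise distinct, the induction-on-length argument of Lemma~\ref{homogen2} goes through with the packages $y_k$ playing the role of individual $x_i$, producing an integer $m_1 \geq 0$ such that $z_k := y_k h^{m_1} \in I$ for every $k$.

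To separate components within a single residue class, I would use the $H$-commutator together with the hypothesis $\ncp(h^n) - C \in I$. Fix $k$ with $y_k \neq 0$ and let $p_k = \#\{i : x_i \neq 0,\ i \equiv k \imod n\}$. Modulo $I$ the central element $\ncp(h^n)$ may be replaced by $C$, so $Hx_i \equiv s^i x_i H + is^i C x_i h^j \imod I$. Since $s^n = r^{jn} = 1$, every $i \equiv k$ shares the common $s^i = s^k$, and since $h$ commutes with $H$ the factor $h^{m_1}$ slides past $H$ freely. Setting $T_k(w) := Hw - s^k wH$, these observations should combine to give
\[
T_k^{\ell}(z_k) \equiv (s^k C)^{\ell}\Bigl(\sum_{i \equiv k} i^{\ell} x_i\Bigr) h^{m_1 + j\ell} \imod I \qquad (\ell = 0, 1, \ldots, p_k - 1),
\]
and since each $T_k^{\ell}(z_k)$ itself lies in $I$, so does each right-hand side.

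The final step is Vandermonde extraction. Multiplying the $\ell$-th relation by $h^{j(p_k - 1 - \ell)}$ equalizes the $h$-powers, and the resulting $p_k$ elements of $I$ form a linear system in the unknowns $x_{i_t} h^{m_1 + j(p_k - 1)}$ whose coefficient matrix $(i_t^{\ell})$ is Vandermonde, hence invertible since the $i_t$ are distinct. Inverting the system gives $x_i h^{m_1 + j(p_k - 1)} \in I$ for every $i$ in the support of $y_k$; taking $m$ at least as large as $m_1 + j(p_k - 1)$ for every $k$ and padding with extra $h$'s as needed yields $x_i h^m \in I$ for all $i \in \Z$. The main obstacle will be verifying the iteration formula for $T_k^{\ell}$: one must check that applying $T_k$ to a mod-$I$ congruence preserves that congruence (which is automatic since $I$ is two-sided) and that no unexpected error terms appear, which relies crucially on $\ncp(h^n) - C$ being central, on $H$ and $h$ commuting, and on $s^n = 1$. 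Once those are in place the Vandermonde step is routine.
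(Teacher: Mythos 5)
Your proposal is correct and follows essentially the same route as the paper: both proofs first use the $h$-commutator $hx-r^kxh$ to separate the homogeneous components into residue classes mod $n$, and then use the $H$-commutation relation together with the hypothesis $\ncp(h^n)-C\in I$ (and the fact that $s^i=s^k$ whenever $r^i=r^k$) to separate components within a single residue class via the correction term $is^kCx_ih^j$. The only difference is organizational: the paper runs a single induction on length, peeling off one component at a time with a first difference, whereas you take all iterated differences at once and invert a Vandermonde system --- the same linear algebra packaged differently.
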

\begin{proof}
If $o(r)=\infty$, this is simply
Lemma~\ref{homogen2}. Suppose
now that $o(r)=n$. 
We again use induction on the
length of $x$. For a given $k\in\Z$ with
$x_k\neq 0$, we
have
\[
hx-r^kxh=\sum_{i\in\Z} (r^i-r^k)x_ih.
\]
The element $hx-r^kxh$ thus
has smaller length than $x$.
Then for all $i\in\Z$
such that $r^i\neq r^k$,
we have $x_i h^m\in I$ for some 
$m\geq 0$. Therefore the element
$y=\sum_{\{i: r^i=r^k\}} x_i h^m$
is also in $I$. 

It follows that $Hy=\sum_{\{i: r^i=r^k\}}
s^ix_i(H+i\ncp(h^n)h^j)h^m$ is also in $I$.
Since $\ncp(h^n)-C$ is an element
of $I$, we conclude that 
$z=\sum_{\{i: r^i=r^k\}}
s^ix_i(H+iC h^j)h^m\in I$. 
Now recall  that $r^i=r^k$
implies that $s^i=s^k$. Thus
\begin{align*}
z&-s^kyH-ks^kCy h^j\\
&=\sum_{\{i: r^i=r^k\}}
[s^ix_i H h^m +is^iCx_ih^{j+m}] - \sum_{\{i: r^i=r^k\}}
[s^kx_i h^m H+ks^k C x_ih^{m+j}]\\
&=\sum_{\{i: r^i=r^k\}} s^k(i-k)Cx_i h^{m+j}.
\end{align*}
This element has smaller length than $y$,
so by the induction hypothesis, for each
$i\neq k$, the homogeneous element
$x_i h^{m'}$ is in $I$. It follows
that $x_k h^{m'}$ is also
in $I$.
\end{proof}

Now let $M$ be a simple module. 
As before, either $hM=0$ or $hM=M$.
We first look at the situation
where $hM=0$.

\subsection{The case $hM=0$}\label{sectionhM=0}
In this case $h$ acts as the zero operator
on $M$. Recall that the nonconformal part 
of $\phi(h)$ can be written as
 $\phi_1(h)=sC h^j$, where $C\neq 0$.
Thus if $j>0$, 
then $\phi_1(h)$ also
acts as the zero operator on $M$.  In this case
we have
exactly the same situation as the conformal
case: if $s$ is not a root of unity,
then $\ann M$ cannot be larger
than $\langle h\rangle$, but if
$s$ is a root of unity, then $M$ must
be finite-dimensional.

Suppose now that $j=0$. Then 
$s=1$ and $\phi(0)=C$; 
also, $Hu=u(H+C)$
and $dH=(H+C)d$. These relations
are exactly analogous to the relations
between $h$ and $u$ (and between
$h$ and $d$) in
the case $r=1, \gamma\neq 0$,
with $H$ taking the role of $h$
and $C$ taking the role of $\gamma$.
In particular, Lemmas~\ref{homogen}
and~\ref{containspower}
are valid; that is,
if $\ann M\supsetneq \langle h
\rangle$, then $\ann M$ contains
a power of $u$ or a power of $d$.
Let's say $\ann M$ contains $d^k$
but not $d^{k-1}$
for some $k> 0$. Since
$du=ud+C$ as operators on $M$,
we have
\[
d^ku=ud^k+kCd^{k-1};
\]
this is a contradiction since it implies
that $Cd^{k-1}$ annihilates $M$.
Thus in this case, $\ann M$ cannot
be larger than just $\langle h\rangle$. 

\subsection{The case $hM=M$}
If $o(r)=n$, recall that $h^n$ acts as a scalar
on $M$. Thus any primitive ideal must contain $\langle
h^n-c\rangle$ for some $c\in\C$. We are
assuming that $\ncp(c)=C\neq 0$;
since $hM=M$, it is also true that $c\neq 0$. (On the other
hand, if $o(r)=\infty$, we have no obvious information
on what a primitive ideal must contain.)

We need the following result on polynomials of two
variables. It is a version of Lemmas~\ref{polyproperty}
and~\ref{distinctpolyproperty}, but
unfortunately it is rather more complicated.
\begin{lem}\label{ultimatepolyproperty}
Suppose $r^j=s$ and $f(x,y)=\sum_{i=0}^b 
g_{i}(x)y^{i}$
is a two-variable polynomial of
degree $(a,b)$, where $\text{deg }g_i(x)
< o(r)$ for $0\leq i\leq b$.  If 
\[
f(rx,sCx^j+sy) = r^as^b f(x,y)
\]
for some $C\neq 0$,
then $f(x,y)$ is a constant
multiple of $x^a$.
\end{lem}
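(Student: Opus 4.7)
The plan is to expand the functional equation in powers of $y$ and compare coefficients. Writing the shifted argument as $sCx^j+sy = s(Cx^j+y)$ and applying the binomial theorem gives
\[
f(rx,sCx^j+sy)=\sum_{k=0}^b y^k\sum_{i=k}^b g_i(rx)\,s^i\binom{i}{k}C^{i-k}x^{j(i-k)},
\]
which I will match against $r^as^b\sum_k g_k(x)y^k$ coefficient by coefficient in $y$. This reduces the problem to a sequence of univariate identities involving the coefficients $g_i(x)$.

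Comparing coefficients of $y^b$ yields only the term $i=b$, giving the identity $g_b(rx)=r^a g_b(x)$. Writing $g_b(x)=\sum_\ell c_\ell x^\ell$ with $\deg g_b=a<o(r)$, this forces $c_\ell(r^\ell-r^a)=0$ for each $\ell$; the constraint $\ell<o(r)$ (together with $a<o(r)$) leaves $\ell=a$ as the only solution, so $g_b(x)=cx^a$ for some $c\neq 0$. Next I suppose for contradiction that $b\geq 1$ and examine the coefficient of $y^{b-1}$, which contributes the two terms $i=b-1$ and $i=b$:
\[
g_{b-1}(rx)s^{b-1} + g_b(rx)\,s^b b C x^j = r^a s^b g_{b-1}(x).
\]
Dividing by $s^{b-1}$, substituting $g_b(x)=cx^a$, and using $s=r^j$ simplifies this to
\[
g_{b-1}(rx)-r^{a+j}g_{b-1}(x)=-bcCr^{a+j}x^{a+j}.
\]
The left-hand side is a polynomial of degree less than $o(r)$, and its $x^\ell$-coefficient $d_\ell(r^\ell-r^{a+j})$ vanishes exactly when $r^\ell=r^{a+j}$. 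If $a+j<o(r)$, then $\ell=a+j$ is one such position and the LHS coefficient at $x^{a+j}$ is $0$; if $a+j\geq o(r)$ (possible only when $o(r)$ is finite), then LHS cannot carry an $x^{a+j}$ term at all because its degree is too small. In either case, matching the $x^{a+j}$ coefficient forces $bcC=0$, contradicting $b\geq 1$, $c\neq 0$, and $C\neq 0$.

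Therefore $b=0$, and the original equation collapses to $g_0(rx)=r^a g_0(x)$. The same argument used for $g_b$ in the first step now applies to $g_0$ (whose degree is also less than $o(r)$), yielding $g_0(x)=cx^a$ and hence the desired conclusion that $f(x,y)$ is a constant multiple of $x^a$. The main obstacle is the $y^{b-1}$-coefficient analysis: one has to exploit the degree bound $\deg g_i<o(r)$ to block the monomial $x^{a+j}$ from appearing in $g_{b-1}$, with a uniform case split that handles both $o(r)=\infty$ and $o(r)<\infty$.
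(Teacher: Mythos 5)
Your proof is correct and follows essentially the same route as the paper: compare the coefficients of $y^b$ and $y^{b-1}$, observe that the $x^{a+j}$-coefficient of $r^{a+j}g_{b-1}(x)-g_{b-1}(rx)$ must vanish while the left side contributes a nonzero multiple of $x^{a+j}$, conclude $b=0$, and finish with the one-variable identity $g_0(rx)=r^ag_0(x)$. Your explicit case split on whether $a+j<o(r)$ and your early determination of $g_b(x)=cx^a$ are harmless refinements of the same argument.
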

\begin{proof}
We first want to show that $b=0$.
Assume for a contradiction that $b>0$.
Then we can write
\[
f(x,y)=g_b(x)y^b + g_{b-1}(x)y^{b-1}+\cdots
\]
where $g_b(x)$ has degree $a$.
Then
\begin{align*}
f(rx,sCx^j+sy)&=g_b(rx)(sCx^j+sy)^b +
g_{b-1}(rx)(sCx^j+sy)^{b-1} + \cdots\\
&=s^b g_b(rx) y^b + s^b  g_b(rx) b Cx^j y^{b-1}
+ s^{b-1} g_{b-1}(rx) y^{b-1} + \cdots\\
&= s^b g_b(rx) y^b + [bCs^b x^j g_b(rx) +
s^{b-1} g_{b-1}(rx)]y^{b-1} + \cdots
\end{align*}
This is equal to
\[
r^{a}s^b g_b(x) y^b + r^{a}s^b g_{b-1}(x) y^{b-1} +\cdots
\]
Equating the coefficients of $y^b$
and of $y^{b-1}$, we get 
\begin{gather*}
g_b(rx) = r^a g_b(x),\\
bCs^b x^j g_b(rx) + s^{b-1} g_{b-1}(rx) = r^{a}s^b g_{b-1}(x).
\end{gather*}
Simplifying the second equation gives us
\[
b C r^{a+j} x^j g_b(x) = r^{a+j} g_{b-1}(x) - g_{b-1}(rx).
\]
The left-hand side is a polynomial
of degree $a+j$ since $g_b$ has degree $a$.
Now look at the right-hand side.
If $g_{b-1}(x) = \sum c_i x^i$, then
\[
r^{a+j} g_{b-1}(x) - g_{b-1}(rx) = \sum c_i(r^{a+j}-r^i)x^i,
\]
so the coefficient of $x^{a+j}$ with is 0. 
We have the required contradiction,
thus indeed $b=0$.

We therefore conclude
that $f(x,y)=g(x)$. Suppose
$g(x)=\sum_{i=0}^a c_ix^i$; recall
that $a<o(r)$. We
require that $g(rx)=r^a g(x)$,
but this implies that
$\sum c_i(r^i-r^a)x^i=0$,
thus showing that $c_i=0$
for $i\neq a$. We conclude that
$g(x)=c_ax^a$, as required.
\end{proof}
As usual, we use this lemma to prove that
large primitive ideals contain a power of $u$
or a power of $d$.
\begin{lem}\label{nonconpower}
Suppose $M$ is simple. If $o(r)=n$ and
$\ann M\supsetneq \langle h^n-c\rangle$
(where $c\neq 0$), then $\ann M$ contains
a power of $u$ or a power of $d$.
Similarly, if $o(r)=\infty$ and 
$\ann M\neq \{0\}$, then $\ann M$
contains a power of $u$ or a power 
of $d$.
\end{lem}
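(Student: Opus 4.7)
The plan is to adapt the minimality-plus-polynomial-identity technique from Lemmas \ref{containspower}, \ref{conf:containspower}, and \ref{containspower2} to the nonconformal setting, using Lemma \ref{ultimatepolyproperty} as the key input. We focus on the $o(r)=n$ case; the $o(r)=\infty$ case is similar but simpler, and the case $hM=0$ is already handled by the discussion in Section~\ref{sectionhM=0}.

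First I would extract a convenient homogeneous element in $\ann M$. Set $I_0 = \langle h^n - c\rangle$, which lies in $\ann M$ because $h^n$ is central and acts by $c \neq 0$ on $M$, and note that $\ncp(h^n) - C \in \ann M$ since $\ncp(h^n)$ acts as the scalar $C$. Pick $y \in \ann M$ with $y \notin I_0$, and apply Lemma~\ref{homogen3} to obtain $m \geq 0$ so that every homogeneous component $y_k h^m$ lies in $\ann M$. Because $h$ is invertible on $M$, the fact that $y \notin I_0$ forces some $y_k h^m \notin I_0$; after replacing $y$ we may assume $y \in L_k$ for some integer $k$, and (by symmetry) $k \geq 0$. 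Write $y = u^k f(h,H)$ and reduce $f$ modulo $h^n - c$ to assume $\deg_h f < n$.

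Next I choose a minimal such representative and carry out the main computation. Among all pairs $(k',f')$ with $k' \geq 0$, $0 \neq f' \in \C[h,H]$ of $h$-degree less than $n$, and $u^{k'}f'(h,H) \in \ann M$, I choose one with $(a,b) = \deg f'$ minimal in the lex ordering used in the paper. Using $Hu = su(H + \ncp(h^n)h^j)$ together with $hu = ruh$ and the commutativity of $h$ and $H$, one gets the transport rule $f(h,H)u = u f(rh, s(H + \ncp(h^n)h^j))$ inside $L$. Hence
\[
xu - r^a s^b ux = u^{k+1}\bigl[f(rh, s(H + \ncp(h^n)h^j)) - r^a s^b f(h,H)\bigr].
\]
Reducing $\ncp(h^n)$ to $C$ and $h^n$ to $c$ modulo $\ann M$, this produces $u^{k+1}\tilde F(h,H) \in \ann M$ where $\tilde F$ is the reduction of $F(h,H) := f(rh, s(H+Ch^j)) - r^a s^b f(h,H)$ modulo $h^n - c$. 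The top $(a,b)$-coefficient of $F$ cancels, and the reduction to $h$-degree less than $n$ does not increase the $H$-degree, so $\deg \tilde F < (a,b)$. By minimality, $\tilde F = 0$.

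The final step is extracting a monomial from $\tilde F = 0$. The $H^b$-coefficient of $F$ is $s^b\bigl[g_b(rh) - r^a g_b(h)\bigr]$, which already has $h$-degree less than $a < n$, so the reduction modulo $h^n-c$ is trivial and we obtain $g_b(rh) = r^a g_b(h)$ as an identity in $\C[h]$; as in Lemma~\ref{ultimatepolyproperty} this forces $g_b(h) = c_a h^a$. If $b \geq 1$, comparing the $H^{b-1}$-coefficient modulo $h^n - c$ gives the congruence
\[
bC c_a r^{a+j} h^{a+j} \equiv r^{a+j}g_{b-1}(h) - g_{b-1}(rh) \pmod{h^n - c}.
\]
The right side has $h$-degree less than $n$ and its $h^{(a+j)\bmod n}$ coefficient vanishes because $r^{(a+j)\bmod n} = r^{a+j}$, whereas reducing the left side modulo $h^n - c$ produces $bC c_a r^{a+j} c^{\lfloor (a+j)/n\rfloor} h^{(a+j)\bmod n}$, a nonzero multiple of $h^{(a+j)\bmod n}$. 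This contradiction forces $b = 0$, and then $\tilde F = 0$ directly yields $f(h) = c_a h^a$. Thus $u^k h^a \in \ann M$, and invertibility of $h$ on $M$ gives $u^k \in \ann M$. The case $k < 0$ is symmetric (work with $f(h,H)d^{|k|}$), and the $o(r) = \infty$ case is easier because no reduction is needed and Lemma~\ref{ultimatepolyproperty} applies to $F=0$ as a genuine polynomial identity. The main obstacle is precisely this adaptation of Lemma~\ref{ultimatepolyproperty} to a congruence modulo $h^n - c$, which goes through because $c \neq 0$ keeps the reduced leading coefficient nonzero.
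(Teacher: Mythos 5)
Your proof follows the paper's argument exactly: Lemma~\ref{homogen3} to reduce to a homogeneous element $u^kf(h,H)$ with $\deg_h f < o(r)$, minimality of $\deg f$, the commutator $xu - r^as^bux$, and the coefficient-by-coefficient analysis of Lemma~\ref{ultimatepolyproperty} to force $f$ to be a nonzero multiple of $h^a$, whence $u^k\in\ann M$ since $hM=M$. Your write-up is in fact more careful than the paper's at one point: when $o(r)=n$ the paper invokes Lemma~\ref{ultimatepolyproperty} directly even though the identity $f(rh,sH+sCh^j)=r^as^bf(h,H)$ is only guaranteed modulo $h^n-c$, whereas you redo the $H^b$ and $H^{b-1}$ coefficient comparisons as congruences and check that the contradiction forcing $b=0$ survives reduction precisely because $c\neq 0$ keeps the reduced coefficient $bCc_ar^{a+j}c^{\lfloor(a+j)/n\rfloor}$ nonzero --- a legitimate patch of a step the paper elides.
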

\begin{proof}
Write $I$ for the zero ideal if $o(r)=\infty$
and for $\langle h^n-c\rangle$ if
$o(r)=n$. Suppose 
$\ann M\supsetneq I$.
By Lemma~\ref{homogen3},
$\ann M$ contains a homogeneous
element not in $I$.  It does no harm
to assume that
this homogeneous element
has nonnegative degree. Thus $\ann M$
contains elements of the form
$x=u^kf(h,H)$, where $f$ is a nonzero polynomial
that can be written as 
$f(h,H)=\sum_i g_i(h)H^i$ with the degree
of $g_i(x)$ being less than $o(r)$.
Choose
$x=u^kf(h,H)$ to be such an element
where
the degree $(a,b)$ of the polynomial $f$
is as small as possible. 
Then
\[
xu-r^as^bux=u^{k+1}[f(rh,sH+sCh^j)-r^as^bf(h,H)]
\]
is also in $\ann M$, and the
degree of the the polynomial 
$f(rh,sH+sCh^j)-r^as^bf(h,H)$
is smaller than $(a,b)$. Thus it must
be zero. By Lemma~\ref{ultimatepolyproperty},
$f(h,H)$ must be a (nonzero) multiple
of $h^a$. Since $hM=M$, we conclude
that $u^k\in\ann M$, as required.
\end{proof}
\begin{lem}\label{nonconfindim}
Let $M$ be a simple module and
suppose $\ann M$ contains a power of
$u$ or a power of $d$. Then $M$
is finite-dimensional.
\end{lem}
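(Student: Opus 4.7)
My plan is to reduce to Corollary \ref{findimcor} by showing the commutator $d^m u - s^m u d^m$ is nonzero in $L$. By the symmetry between $u$ and $d$, it suffices to handle the case that some power of $d$ lies in $\ann M$; pick $m \geq 1$ minimal with $d^m \in \ann M$, so that $d^{m-1} \notin \ann M$. Corollary \ref{findimcor} then delivers $M$ finite-dimensional as soon as $d^m u - s^m u d^m \neq 0$ in $L$.

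A straightforward induction on $m$, using $du - sud = \phi(h)$ together with $d\phi(h) = \phi(rh)d$, gives
\[
d^m u - s^m u d^m = f_m(h)\, d^{m-1}, \qquad f_m(h) = \sum_{i=0}^{m-1} s^{m-1-i}\phi(r^i h).
\]
Substituting $\phi(h) = s\cp(h) - \cp(rh) + sh^j\ncp(h^n)$ and using $s = r^j$ (so that each coefficient $s^{m-1-i}\cdot s\cdot r^{ij}$ collapses to $s^m$), the conformal portion telescopes and one arrives at
\[
f_m(h) = s^m\cp(h) - \cp(r^m h) + s^m h^j \sum_{i=0}^{m-1} \ncp(r^{in} h^n).
\]
Since $L$ is a domain and $d^{m-1}\neq 0$, it remains only to show $f_m \neq 0$ as an element of $\C[h]$.

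The main point is that, with the right choice of $\cp$, the conformal and nonconformal pieces of $f_m$ are supported on disjoint sets of monomials. When $o(r) = n$, the sum $\sum_{i=0}^{m-1}\ncp(r^{in}h^n)$ collapses to $m\ncp(h^n)$, so the nonconformal piece $s^m m h^j\ncp(h^n)$ is a nonzero polynomial whose monomials all have exponent $\equiv j \pmod n$; meanwhile $\cp$ can be chosen to have no $h^i$ term with $i \equiv j \pmod n$ (the equations $c_i(s - r^i) = 0$ are automatic for such $i$), so $s^m\cp(h) - \cp(r^m h)$ has no monomials of exponent $\equiv j \pmod n$ and cannot cancel anything from the nonconformal piece. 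When $o(r) = \infty$, $\ncp$ is a nonzero constant $C$, and the coefficient of $h^j$ in $s^m\cp(h) - \cp(r^m h)$ is $c_j(s^m - r^{jm}) = 0$ (because $r^{jm} = s^m$), whereas the coefficient of $h^j$ in the nonconformal piece is $m s^m C \neq 0$. Either way $f_m \neq 0$, and Corollary \ref{findimcor} finishes the argument. The main obstacle is verifying that the conformal and nonconformal pieces never interfere; once that is in hand, everything else is routine bookkeeping.
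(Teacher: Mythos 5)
Your proposal is correct and follows essentially the same route as the paper: establish $d^mu-s^mud^m=f_m(h)d^{m-1}$, observe that the nonconformal contribution to $f_m$ is supported on exponents congruent to $j$ (mod $n$, or exactly $j$ when $o(r)=\infty$) while the conformal contribution is not, conclude $f_m\neq 0$, and invoke Corollary~\ref{findimcor}. The only (harmless) differences are that you telescope the conformal part through $\cp$ and verify nonvanishing of $f_m$ as a polynomial identity in $L$, whereas the paper leaves the conformal sum in terms of $\phi_0$ and works with the operator identity on $M$ after replacing $\ncp(h^n)$ by the nonzero scalar $C$.
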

\begin{proof}
Let's say $\ann M$ contains $d^k$ but
not $d^{k-1}$. Since $du=sud+\phi_0(h)+sCh^j$,
we have
\[
d^ku-s^kud^k = 
\left[\sum_{i=0}^{k-1} s^i\phi_0(r^{k-i-1}h)+ks^kCh^j\right]d^{k-1}.
\]
The polynomial in square brackets is
nonzero since none of the monomials in
$\phi_0$ can cancel out $h^j$. By
Corollary~\ref{findimcor}, $M$ is
finite-dimensional.
\end{proof}
It remains to show that $\{0\}$ is 
a primitive ideal if $o(r)=\infty$; also
that $\langle h^n-c\rangle$ is
a primitive ideal when $o(r)=n$.
\begin{lem}
Suppose $L$ is not conformal, $s=r^j$,
and $r$ is not a root of unity. Then 
$\{0\}$ is a primitive ideal.

On the other hand, suppose
$o(r)=n$ and $c\in\C^\times$ satisfies
$\ncp(c)\neq 0$. Then the ideal
$\langle h^n-c\rangle$ is a primitive
ideal.

Suppose further that 
$j=0$ and $\ncp(0)\neq 0$. Then 
$\langle h\rangle$ is a primitive
ideal.
\end{lem}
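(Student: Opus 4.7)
The plan is to realize each of the three ideals as the annihilator of an infinite-dimensional simple universal weight module $W(\lambda,\beta)$ from section~\ref{universal}. Recall that on $W(\lambda,\beta)$ we have $hv_i=\lambda_i v_i$ with $\lambda_i=r^i\lambda$, and the $\beta_i$ satisfy $\beta_{i+1}=s\beta_i+\phi(\lambda_i)$. Whenever the weights $(\lambda_i,\beta_i)$ are pairwise distinct and every $\beta_i\neq 0$, $W(\lambda,\beta)$ is simple, and both $u$ and $d$ act injectively on it; in particular no power of $u$ or $d$ can annihilate $W$. Combined with Lemma~\ref{nonconpower}, this will pin down the annihilator.

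For the ideal $\{0\}$ when $o(r)=\infty$, pick any $\lambda\neq 0$. The $\lambda_i$ are then pairwise distinct, so the weights are automatically distinct regardless of $\beta$. Each $\beta_i$ has the form $s^i\beta+p_i(\lambda)$ for a polynomial $p_i$, so only countably many $\beta$ make some $\beta_i$ vanish; a generic choice of $\beta$ gives a simple $W(\lambda,\beta)$ with $\ann W=\{0\}$ by Lemma~\ref{nonconpower}.

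For $\langle h\rangle$ when $j=0$ and $\ncp(0)\neq 0$ (which forces $s=r^0=1$), take $\lambda=0$. Then $\lambda_i=0$ for all $i$ and the recursion collapses to $\beta_{i+1}=\beta_i+\ncp(0)$, so $\beta_i=\beta+i\ncp(0)$; since $\ncp(0)\neq 0$ these values are all distinct. A generic $\beta$ keeps every $\beta_i$ nonzero, so $W(0,\beta)$ is an infinite-dimensional simple module with $hW=0$. By the analysis in section~\ref{sectionhM=0} its annihilator cannot strictly contain $\langle h\rangle$, and it certainly contains $\langle h\rangle$, giving equality.

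For $\langle h^n-c\rangle$ when $o(r)=n$ and $\ncp(c)\neq 0$, choose $\lambda$ with $\lambda^n=c$, so that $h^n-c\in\ann W$. The main obstacle is verifying distinctness of the weights: since $\lambda_i$ cycles with period $n$, we must show $\beta_i\neq\beta_{i+kn}$ whenever $k\neq 0$. Iterating the recursion and using $s^n=r^{jn}=1$ gives
\[
\beta_{i+n}-\beta_i=\sum_{k=0}^{n-1}s^{n-1-k}\phi(r^{i+k}\lambda),
\]
an expression independent of $\beta$. Splitting $\phi=\phi_0+\phi_1$ with $\phi_0(h)=s\cp(h)-\cp(rh)$ conformal and $\phi_1(h)=sh^j\ncp(h^n)$, the $\phi_0$ contributions telescope to zero modulo $r^n=1$, while the $\phi_1$ contributions (using $s=r^j$) collapse to $ns\,r^{j(n-1+i)}\lambda^j\ncp(c)$, which is nonzero since $n$, $s$, $\lambda$, and $\ncp(c)$ are all nonzero. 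The same identity with $i$ replaced by $i+n$ shows that $\beta_{i+(k+1)n}-\beta_{i+kn}$ is independent of $k$, so within each $\lambda$-orbit the $\beta$-values form a nontrivial arithmetic progression and are pairwise distinct. A generic $\beta$ makes every $\beta_i$ nonzero, so $W$ is simple; by Lemma~\ref{nonconpower}, $\ann W=\langle h^n-c\rangle$.
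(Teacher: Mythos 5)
Your proof is correct and follows essentially the same route as the paper: construct a simple universal weight module $W(\lambda,\beta)$ with generically chosen parameters realizing each of the three ideals, then use Lemma~\ref{nonconpower} together with the observation that no power of $u$ or $d$ (nor anything beyond $\langle h\rangle$, via section~\ref{sectionhM=0}) can annihilate $W$. The only cosmetic difference is in the simplicity check: the paper diagonalizes $H$ via $Hv_i=s^i(\mu+iC\lambda^j)v_i$ and reads off distinct eigenvalues, whereas you verify weight-distinctness directly through the telescoping computation of $\beta_{i+n}-\beta_i$ --- the same calculation in different coordinates.
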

\begin{proof}
As usual, we use the universal weight
modules $W(\lambda,\beta)$. As before,
it is more convenient to use the
parametrization $(\lambda,\mu)$
where $\mu=\beta+\cp(\lambda)$
is the $H$-eigenvalue of the
vector $v_0$. Since $Hu=su(H+Ch^j)$
(where $C=\ncp(c)$ if $o(r)=n$;
if $o(r)=\infty$, $C$ is simply
the coefficient of $h^j$ in $\phi_1(h)$),
we can figure out that
$Hv_i=s^i(\mu+iC\lambda^i)v_i$
for all $i\in\Z$. Thus
\[
ud v_i= [s^i(\mu+iC\lambda^i)-\cp(r^i\lambda)]v_i.
\]
Note that each $v_i$ is an
eigenvector of $H$ with distinct
eigenvalues; thus $W(\lambda,\mu)$
is simple if $\beta_i=
s^i(\mu+iC\lambda^i)-\cp(r^i\lambda)$
is nonzero for all $i\in\Z$.

Now for any given value of
$\lambda$, there are only
countably many values of
$s^{-i}\cp(r^i\lambda)-iC\lambda^i$
as $i$ ranges over $\Z$.
If we pick $\mu$ outside
these values, then $\beta_i
\neq 0$ for all $i\in\Z$
and $W(\lambda,\mu)$ would
be simple. We now see what
happens for various values of
$\lambda$.

If $\lambda=0$, then $h$
acts as the zero operator
on $W(\lambda,\mu)$. If in
addition $j=0$ and $\ncp(0)\neq 0$,
then the
annihilator of $W(\lambda,\mu)$
cannot be larger than 
$\langle h\rangle$, as discussed
in section~\ref{sectionhM=0}.
Thus $\langle h\rangle$ is
indeed a primitive ideal
in this situation. 

If $\lambda\neq 0$, then
$h$ does not act as the zero
operator, hence $hW(\lambda,\mu)=
W(\lambda,\mu)$. If in
addition $r$ is not a
root of unity, then the
annihilator of $W(\lambda,\mu)$
cannot be larger
than $\{0\}$, by
Lemmas~\ref{nonconpower}
and~\ref{nonconfindim}, so
$\{0\}$ is indeed a primitive
ideal.

If $o(r)=n$, then for any
value $c\in \C^\times$ such
that $\ncp(c)\neq 0$,
we can pick $\lambda\in\C$
with $\lambda^n=c$ (note that
$\lambda\neq 0$); after
that, we pick $\mu$ as above
to make $W(\lambda,\mu)$
a simple module. 
Then $h^n$ acts as the
scalar $c$ on $W(\lambda,\mu)$.
The annihilator of $W(\lambda,\mu)$
is just $\langle h^n-c\rangle$
by Lemmas~\ref{nonconpower}
and~\ref{nonconfindim}.
\end{proof}

\subsection{A list of primitive ideals}
We can put all these results together
to generate an explicit list of the primitive
ideals of $L$ when $L$ is not conformal.
Recall that $s=r^j$. If $o(r)=n$, we 
write $\phi(h)=\phi_0(h)+s\ncp(h^n)h^j$
where $\phi_0(h)=\sum_{i\not\equiv j}
a_ih^i$. As before, we list primitive ideals
that are annihilators of infinite-dimensional
modules.

\bigskip
\begin{center}
\begin{tabular}{|c|c|c|c|}\hline
$j$ & $o(r)$ & $p(0)$ & primitive ideals\\ \hline
any & $\infty$ & n/a & $\{0\}, \langle h\rangle$ \\
$0$ & $n$ & $0$ & $\langle h^n-c\rangle$, $c\in\C^\times,
	\ncp(c)\neq 0$\\
$0$ & $n$ & nonzero & $\langle h\rangle$, 
	$\langle h^n-c\rangle$, $c\in\C^\times,
	\ncp(c)\neq 0$\\
$>0$ & $n$ & any & $\langle h^n-c\rangle$, $c\in\C^\times,
	\ncp(c)\neq 0$\\
\hline
\end{tabular}
\end{center}

\noindent
Department of Mathematics\\
Franklin and Marshall College, Lancaster, PA 17604\\
\texttt{iwan.praton@fandm.edu}

\end{document}